\title[]{The tropical discriminant of a polynomial map on a plane}
\author{Boulos El Hilany}
\thanks{For this work, the author was supported by the DFG Walter Benjamin Programme: EL 1092/1-1}
\thanks{MSC 2020: Primary 14D06, Secondary: 14T20, 58K15}
\begin{document}
\maketitle
\begin{abstract} 
The discriminant of a polynomial map is central to problems from affine geometry and singularity theory.
Standard methods for characterizing it rely on elimination techniques that can often be ineffective. 
This paper concerns polynomial maps on the two-dimensional torus defined over a field of Puiseux series.
We present a combinatorial procedure for computing the tropical curve of the discriminant of maps determined by generic polynomials with given supports. 
Our results enable one to compute the Newton polytope of the discriminant of complex polynomial maps on the plane. 
%%over the field of generalized Puiseux series with complex coefficients. 
%We present a combinatorial procedure for computing the tropical curve of the discriminant of general maps with given supports. 
%As an application, we use our method to compute the Newton polytope of the discriminant of a complex polynomial map on the plane. 

\end{abstract}
 \markleft{}
 \markright{}
\section{Introduction}\label{sec:intro}
Understanding the topology of polynomial maps $f:X\to Y$ between two smooth affine varieties is essential to classification problems in algebraic geometry, global analysis and singularity theory. 
Although these have been extensively investigated in the last fifty years, numerous problems regarding polynomial maps remain unsolved (see e.g., \cite{Tib07}), such as classifying their topological types~\cite{jelonek2001topological}, describing the set of missing points $Y\setminus f(X)$~\cite{Jel99a,Hilany+2022}, and the famous Jacobian conjecture~\cite{dEss12}.
The \emph{bifurcation set} $B(f)$ of a map $f$ plays a key role in this context. 
 This is the smallest set $B$ at which the map
\[
f: X\setminus f^{-1}(B)\to Y\setminus B
\] is a locally trivial $\cC^{\infty}$ fibration. The lack of thorough methods, feasible for describing the bifurcation set, hinders progress in affine geometry. Also, there is no systematic mathematical and algorithmic framework for the study of $B(f)$ that exploits the structure and the geometry of the polynomials. In this paper, we develop a combinatorial approach for characterizing the set of critical values of polynomial maps on the plane in the above manner. 

%Ever since Thom's work on the topic, the bifurcation set for polynomial maps has been subject to intense study. 
Thom's result~\cite{Tho69} on the finiteness of $B(f)\subset\C $ for polynomial functions $\C^n\to\C$ was later generalised for polynomial maps $f=(f_1,\ldots,f_n):X\to\Bbbk^n$ defined over the field $\Bbbk$ of real or complex numbers. It is now known, due to the works of Wallace~\cite{wallace1971linear}, Varchenko~\cite{varchenko1972theorems} and Verdier~\cite{verdier1976stratifications}, that $B(f)$ is contained in a proper algebraic (or semi-algebraic in the real case) set. Several notions of regularity conditions on the fibers of polynomial maps $f:\Bbbk^m\to\Bbbk^n$ have been established and related to the $\cC^{\infty}$ property above, using sophisticated techniques from topology and singularity theory (c.f. Rabier~\cite{rabier1997ehresmann}, Gaffney~\cite{gaffney1999fibers}, Kurdyka, Orro and Simon~\cite{kurdyka2000semialgebraic}). Equivalences between all these conditions have been proven (c.f. Gaffney~\cite{gaffney1999fibers}, Dias, Ruas and Tib\u{a}r~\cite{DRT12}, and Jelonek~\cite{jelonek2003generalized,jelonek2004asymptotic}), thus establishing a versatile description for non-regularity of the fibers. The corresponding set of values is similarly shown to be a proper (semi-) algebraic set~\cite{kurdyka2000semialgebraic}. Jelonek and Kurdyka~\cite{jelonek2005quantitative} later provided an upper bound on the degree of the above set $B$, and Esterov~\cite{Est13} showed that the bifurcation set coincides with $B$ whenever a map $(\C\setminus 0)^m\to\C^n$ satisfies some genericity conditions in terms of Newton polyhedra. 
%(for a broader summary on the subject, the reader may consult~\cite{Tib07}). 

%In spite of the significant advancements in the theory of polynomial maps, the tools involved remain limited in their scope and applications for the problems mentioned in the beginning. 
%%One such drawback is the limited scope of methods for computing the bifurcation set. 
%For instance, we can either only approximate the bifurcation set~\cite{DTT2017} or compute its equations if $f$ is a generically-finite map~\cite{Jel93,Sta02}. 

%In this paper, we consider the set $D(f)\subset B(f)$ of critical values $f(C(f))$, under dominant polynomial maps $f: K^n\to K^n$. 

The bifurcation set can be either approximated~\cite{DTT2017}, or its equations can be computed if $f$ is a generically-finite map~\cite{Jel93,Sta02}. In fact, for any polynomial map $f$ over an algebraically closed field $K$ of characteristic zero, 
an important subset of $B(f)$ can be determined using effective methods; 
%the set $B(f)$ can be partly determined using effective methods; 
the locus in question is the set $D(f)$ of critical values of $f$, called its \emph{discriminant}. 
%For dominant maps $f:K^n\to K^n$, for instance, the closure of $D(f)$ is a (possibly empty) algebraic hypresurface~\cite{jelonek2005quantitative} whose equation generates the ideal
%\begin{equation}\label{eq:disc_elimination}
%\langle f_1-y_1,\ldots,f_n-y_n,\cj_f\rangle\cap K[y_1,\ldots,y_n],
%\end{equation} where $\cj_f$ is the determinant of the Jacobian matrix of $f$ with respect to a fixed coordinate system in $K^n$. 
For polynomial maps $f:K^n\to K^n$, for instance, the closure of $D(f)$ is often an algebraic hypresurface~\cite{jelonek2005quantitative} whose equation generates the ideal
\begin{equation}\label{eq:disc_elimination}
\langle f_1-y_1,\ldots,f_n-y_n,\cj_f\rangle\cap K[y_1,\ldots,y_n],
\end{equation} where $\cj_f$ is the determinant of the Jacobian matrix of $f$ with respect to a fixed coordinate system in $K^n$. 
Standard methods from elimination theory are impractical for computing~\eqref{eq:disc_elimination} whenever the ideals are generated by polynomials with high degrees. The motivation behind this paper is to find a new procedure that describes invariants of the discriminant, such as its Newton polytope. 
%The importance of such an approach becomes apparent as any advancement in the study of the discriminant would be useful in problems from affine geometry. 
\subsection{Our contribution}\label{subs:contribution}  
We consider polynomial maps over the field $\K$ of $1$-parametric complex Puiseux series $
c_0t^{r_0}+ c_1t^{r_1}+\cdots $ with ascending real exponents. It is equipped with a function $\val:\K\to\R\cup\{-\infty\}$ that satisfies $\val(0)=-\infty$ and takes 
a non-zero value to minus its lowest exponent of $t$. Then, the \emph{tropicalization} is the map $\Val:\K^n\to(\R\cup\{-\infty\})^n$ taking $\val$ coordinate-wise, and sending any algebraic hypersurface $\{P=0\}\subset(\K\setminus 0)^n$, for some polynomial $P$ defined over $\K$, onto a piece-wise linear polyhedral complex called \emph{tropical hypersurface} (see~\S\ref{sec:definitions-descriptions}). Thanks to the well-known correspondence theorem of Kapranov (c.f. Theorem~\ref{th:Kapranov}), any such tropical hypersurface can be recovered from the corner-locus of a piecewise-affine function $P^{\trop}:\R^n\to\R$, called a \emph{tropical polynomial}, that is determined entirely by $P$. Tropicalization preserves much of the data from the initial varieties in the form of polyhedral invariants. This made describing the topological invariants, singularities, Newton polytopes and other properties of algebraic varieties much easier. Tropical geometry thus became a powerful tool for tackling numerous problems in an abundance of mathematical disciplines (see e.g.,~\cite{Mik05,Kat09,BIMS15,MS15,joswig2021essentials}).

We introduce a purely combinatorial method, Theorem~\ref{thm:main}, for computing the tropicalization of the discriminant of a polynomial map on the plane for generic polynomials with fixed given supports. 
%$f(\TK)$ is dense for the standard topology in $\TK$).
This constitutes a correspondence theorem, which, applied to our context, allows the use of simple methods for describing invariants of the discriminant of complex polynomial maps. One such invariant is the Newton polytope (see Theorem~\ref{th:Newton-polytope}).
%, we use our new description to compute the Newton polytope corresponding to the discriminant of complex polynomial maps.

\subsubsection{Correspondence theorem} Tropical polynomials can be constructed from classical ones by replacing the monomial term $cx^iy^j$ with $\val(c)+iu+ jv$, then taking the maximum over all such terms. The polynomials 
{\small
\begin{equation}\label{eq:main_example}
\begin{array}{ll}
 f_1:= & x- 3t^{1/2} x^2+ 4y -5xy + 6t^2y^2 ,\\
 f_2:= & 8t^{21/5}x^2 -7tx-9t^8x^3+10t^3y-11t^{3/2}xy+12t^6 x^2y -13t^4y^2 +14t^5 xy^2-15t^{9} y^3,
\end{array}
\end{equation}} 
for example, give rise to the polynomials
%the map $F$ below gives rise to the decomposition of $\R^2$ represented in Figure~\ref{fig:critical_set-disriminant}:
{\small
\begin{equation}\label{eq:main_example_trop}
\begin{array}{rl}
 F_1:= & \!\!\!\!\!\! \max (u, ~2u- 1/2,~ v,~ u+v,~ 2v-2 ) ,\\
 F_2:= & \!\!\!\!\!\!\max (u-1,~2u-21/5,~3u-8,~v-3,~u+v-3/2,~2u+v-6,~2v-4,~u+2v-5,~3v-9)\!\!\!\!\!\!
\end{array}
\end{equation}
} Accordingly, $F$ is the \emph{tropical polynomial map} corresponding to $f$, which will be denoted by $f^{\trop}$.
%We use the notation $P^{\trop}$ for the tropical polynomial obtained from a polynomial $P\in\K[x,y]$ in this way.

%We consider in this paper a large family of polynomial maps. 
The \emph{support} of a polynomial is the set of exponent vectors of its monomials in $\N^n$ appearing with non-zero coefficients. Given a pair of finite subsets $A_1,A_2\subset \N^2$, we define the space $K[A_1,A_2]$ of polynomial pairs $(f_1,f_2)$ over a field $K$, where the support of $f_i$ is included in $A_i$. This space can be identified with $K^{|A_1|}\times K^{|A_2|}$ as each pair of polynomials distinguishes a tuple formed by a list of their coefficients. 
%Unions of finitely-many subsets  $\mathcal{L}(\R^2)$ be the set of  finitely-many  $\R^2$. 

%A \emph{tropical germ} on $\R^2$ is a pair $(\sigma,~\varphi_{\sigma})$, where $\sigma$ is a convex polygon in $\R^2$, and $\varphi_{\sigma}$ is a tropical polynomial map $(\varphi_1,\varphi_2):\R^2\to\R^2$ so that for any point $x\in\sigma$ the maxima $\max_i \varphi^i_1(x)$ and $\max_j \varphi^j_2(x)$ are reached at each affine term appearing in $\varphi_1$ and $\varphi_2$. We use $\cT\!G$  to denote the set of all tropical germs on $\R^2$. Note that any planar tropical polynomial map $F$ induces a unique subset in $\cT\!G$ of all pairs $(\xi,F_\xi)$, where $\xi$ is a polyhedron in the subdivision of $\R^2$ induced by $F$, and $F_\xi$ is obtained from $F$ by removing the smallest possible number of linear terms so that $(\xi,F_\xi)$ is a germ on $\R^2$. Let $\mathcal{P}(\R^2)$ be the set of all subsets in $\R^2$.

\begin{theorem}[Theorem~\ref{th:main-concise}]\label{thm:main}
Let $A_1$ and $A_2$ be two finite non-empty subsets of $\N^2\setminus \{(0,0)\}$. Then, there exists a Zariski open $\Omega\subset \K[A_1,A_2]$ consisting of pairs $f$ for which the tropicalization, $\Val(D(f))$, of the discriminant of $f$ can be computed using only the corresponding tropical polynomial map $F:=f^{\trop}$.
\end{theorem}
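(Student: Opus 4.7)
The plan is to build, from purely combinatorial data $F$, an explicit polyhedral complex $\mathcal{T}(F)\subset\mathbb{R}^2$, and then verify that $\mathcal{T}(F)=\Val(D(f))$ for every $f$ in some Zariski open $\Omega\subset\mathbb{K}[A_1,A_2]$. The construction of $\mathcal{T}(F)$ is guided by the algebraic presentation of the discriminant: $D(f)$ is the closure of the image $f(C(f))$, where $C(f)\subset(\mathbb{K}^*)^2$ is the critical locus, i.e.\ the hypersurface defined by the Jacobian polynomial $\mathcal{J}_f:=\partial_x f_1\,\partial_y f_2-\partial_y f_1\,\partial_x f_2$. The support of $\mathcal{J}_f$ is a finite set determined entirely by $A_1$ and $A_2$, and its coefficients are polynomial expressions in those of $f_1,f_2$.

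The first step is to tropicalize the critical curve. For $f$ ranging over a nonempty Zariski open subset $\Omega_1$, no cancellation occurs among the monomial expressions defining $\mathcal{J}_f$, hence $(\mathcal{J}_f)^{\mathrm{trop}}$ is computable directly from $F$; Kapranov's theorem (Theorem~\ref{th:Kapranov}) then gives $\Val(C(f))$ as the corner locus of this tropical polynomial. The second step is to transport this tropical curve to $\mathbb{R}^2$ along $f^{\mathrm{trop}}=F$. On a further Zariski open $\Omega_2$, for each point $p\in C(f)$ outside a codimension-one subset, the two initial forms $\mathrm{in}_{\Val(p)}(f_1)$ and $\mathrm{in}_{\Val(p)}(f_2)$ have no accidental vanishing, so $\Val(f(p))=F(\Val(p))$. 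Taking closures, one obtains that $\Val(D(f))$ coincides with the image of $\Val(C(f))$ under the piecewise affine map $F$, which by construction is $\mathcal{T}(F)$.

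The set $\Omega$ is then defined as the intersection $\Omega_1\cap\Omega_2\cap\Omega_3$, where $\Omega_3$ is the open locus on which $D(f)$ is a curve of the generically expected type (i.e.\ $f|_{C(f)}$ is generically finite onto each component of its image, no extraneous boundary components appear, and the tropical multiplicities from $\Val(C(f))$ descend correctly under $F$). Each of these conditions is cut out by the non-vanishing of finitely many polynomial expressions in the coefficients of $f$, so $\Omega$ is Zariski open and nonempty.

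The main obstacle is the second step: showing that tropicalization commutes with the image map $f\colon C(f)\to\mathbb{K}^2$. Tropicalization is notoriously badly behaved under projections and images, and one must rule out both ``vertical'' rays of $\Val(C(f))$ that collapse to points under $F$ and ghost components in $\Val(D(f))$ arising from the closure in the boundary of the tropical torus. The resolution will amount to a Sturmfels--Tevelev-type projection argument tailored to the restriction of $F$ to $\Val(C(f))$, combined with a careful dimension count at each unbounded direction of $\Val(C(f))$ to ensure that, generically in $\Omega$, every piece of $\Val(D(f))$ is already realized as the image of a piece of $\Val(C(f))$ and no others are created.
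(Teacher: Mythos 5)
Your core step~2 is not correct, and the failure is precisely the phenomenon this paper is built around. You claim that, after discarding a codimension-one bad locus and taking closures, $\Val(D(f))$ coincides with the image $F\bigl(\Val(C(f))\bigr)$ of the tropical critical curve under the tropical map. This is false even for generic $f$: the ``accidental vanishing'' of initial forms at critical points is not removable by genericity, because whenever $\Val(z)$ lies in a cell of $T_1$ (resp.\ $T_2$) the initial form $\overline{f}_1$ (resp.\ $\overline{f}_2$) is by construction non-monomial, and over the \emph{super-critical} cells of Definition~\ref{def:super-crit} there are infinitely many critical points with the same valuation whose images sweep out an entire ray $\Raw(F(\xi))$ or $\Ras(F(\xi))$ (Proposition~\ref{prop:super-critical-mixed-cells}~\textbf{a}). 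In Example~\ref{ex:map_super-crit}, $f=(z_1+z_2,\,z_2+z_1z_2)$ has $C(F)$ equal to the tropical line with vertex $(0,0)$; the image $F(C(F))$ is the point $(0,0)$ together with the rays in directions $(0,-1)$ and $(1,2)$, yet $\Val(D(f))$ also contains the horizontal ray $\{(\lambda,0):\lambda\le 0\}$, which lies in the image of no piece of $C(F)$ and is not in the closure of $F$ of the ``good'' part of the critical curve. So the extra rays cannot be ``ruled out'' as you propose in your last paragraph; they must be detected and \emph{added}, which is exactly what $\Phi_F$ does on super-critical $0$- and $1$-cells (cases~\ref{it:01},~\ref{it:021},~\ref{it:131},~\ref{it:1321},~\ref{it:1322} of Definition~\ref{def:main}), while Lemma~\ref{lem:val-F_F-val} only guarantees that \emph{some} critical point over each $x\in C(F)$ has image of valuation $F(x)$, not that all do.

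A secondary problem is the proposed fix. Sturmfels--Tevelev-type results compute tropicalizations of images under \emph{monomial} maps of tori; $f$ itself is not monomial, so the only way to invoke such a result is to pass to the incidence curve $\{f_1(z)=w_1,\ f_2(z)=w_2,\ \cj_f(z)=0\}\subset(\K\setminus 0)^4$ and project to the $w$-coordinates. That is legitimate, but it merely relocates the difficulty: one must then determine the tropicalization of this incidence curve, including its vertical edges over super-critical cells, which is the actual content of the paper's Propositions~\ref{prop:t-crit_mixed-cells} and~\ref{prop:super-critical-mixed-cells} (the latter proved via the balancing condition on the space curve $\Gamma\subset\R^3$ and the case-by-case computation of $\overline{\cj}_f$ in the appendix). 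Your first step (genericity of $\cj_f^{\trop}$, i.e.\ $\Omega_2$ of Lemma~\ref{lem:crit-points-invar}) and your treatment of $2$-cells are consistent with the paper, but without the classification of cells and the super-critical analysis the construction of $\mathcal{T}(F)$ you describe computes the wrong set.
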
 
Let us summarize the method from Theorem~\ref{thm:main}, whose details can be found in \S\ref{sec:proof-main-th}. The tropical polynomial map $F:\R^2\to\R^2$ induces a polyhedral-decomposition $\Xi$ of the real plane 
\[
\R^2=\bigsqcup_{\xi\in\Xi}\xi,
\] where each element in $\Xi$ is the relative interior of a polyhedron in $\R^2$, and the restriction $F|_{\xi}$ at each $\xi\subset \R^2$ is an affine map such that if $F|_{\xi}= F|_{\xi'}$, then $\xi =\xi'$ or $\overline{\xi}$ is a face of $\overline{\xi'}$ (c.f~\cite{BB13,grigoriev2022tropical}). We consider the family of all maps $f\in\K[A_1,A_2]$, whose tropicalizaltions $F:=f^{\trop}$ produce a decomposition $\Xi$ satisfying a transversality property as in Definition~\ref{def:Transv}. These form a Zariski open subset (Lemma~\ref{lem:gen-valued_transversal}). %- The usefulness of this criterion manifests itself by producing a finite number of types of cells
There are finitely-many possible combinatorial types of cells which can be obtained from tropical polynomial maps $F$ above (Definition~\ref{def:main}). We provide necessary and sufficient conditions for a type of cell to contain the valuation of a critical point of $f$ (Proposition~\ref{prop:t-crit_mixed-cells}). Any cell with this property is called \emph{critical}. We then show that the tropicalization $\tdf(\xi)$ of the set of images $f(z)$ of the critical points $z\in (\K\setminus 0)^2$ depends only on $F:= f^{\trop}$ and on the critical cell $\xi\in\Xi$ containing $\Val(z)$ (Proposition~\ref{prop:t-crit_mixed-cells}). Furthermore, the combinatorial type of critical cells $\xi$ determines $\tdf(\xi)$; 
it is either the image of $\xi$ under $F$, or a union of at most two rays emanating from $F(\xi)$ whenever the latter is a point. 
%it is either a union of at most two rays emanating from $F(\xi)$ whenever the latter is a point, or simply the image of $\xi$ under $F$. 
This gives rise to a function $\Phi_F:\Xi\to\pow(\R^2)$, whose image produces a piecewise-recovery of the set $\Val(D_f)$ according to a universal recipe (described in Definition~\ref{def:main}) that depends only on the tropical map $F$. The resulting method makes the computational time dependent on the sparsity of the polynomials involved rather than their degrees and the complexity of the coefficients (which are Puiseux series).

%We show that, if $\Xi$ satisfies a transversality property (see Definition~\ref{def:Transv}), then for any map $f$ satisfying $F=f^{\trop}$, the set $\Val(D(f))$ is independent of the choice of $f$ above. We furthermore identify which type of cells $\xi\in\Xi$ contain the valuations of critical points of 

%Figure~\ref{fig:situations-table} contains a pictorial representation of each situation: The yellow regions correspond to cells $\xi\in\Xi$ at which $F|_{\xi}$ is not injective, the continuous lines correspond to the tropical curve of $F_1$, and the dashed lines correspond to the tropical curve of $F_2$. 

%Let us illustrate the theorem with an example. 

\begin{example}\label{ex:main}
The tropicalized map of~\eqref{eq:main_example} is the map~\eqref{eq:main_example_trop}. 
The tropical curve of the set $D(f)$ is represented in Figure~\ref{fig:critical_set-disriminant} in green. It can be recovered by computing $\cd_f$ using~\eqref{eq:disc_elimination}. Instead, we obtained $\Tdf$ using Theorem~\ref{thm:main} and Definition~\ref{def:main} by piecing-up images $\Phi_F(\xi)$: Each of the vertices $\beta,\gamma,\delta,\eta$ is mapped onto a pair of half-lines (one vertical and one horizontal) emanating from the vertices $b,c,d,e$, respectively. Here, we have $F(\{\beta,\gamma,\delta, \eta\}) = \{b,c,d,e\}$. We also have $\Phi_F(\{\kappa,\lambda\})$ consists of two vertical half-lines emanating from $F(\kappa)=k$ and $F(\lambda)=l$ respectively, and  $\Phi_F(\{\alpha\})$ is a horizontal half-line emanating from $F(\alpha)=a$. The bounded yellow cell $\sigma$ is mapped to the line segment $F(\sigma)$ joining $b$ to $e$. We also have $\Phi_F(]\alpha,\beta[) = F(]\alpha,\beta[) = ]a,b[$. Any one-dimensional cell $\xi$ of $\Xi$, not contained in the boundary of the yellow region, satisfies $\Phi_F(\xi) = F(\xi)$. All other non-zero-dimensional cells have an empty set as image under $\Phi_F$.
\end{example}

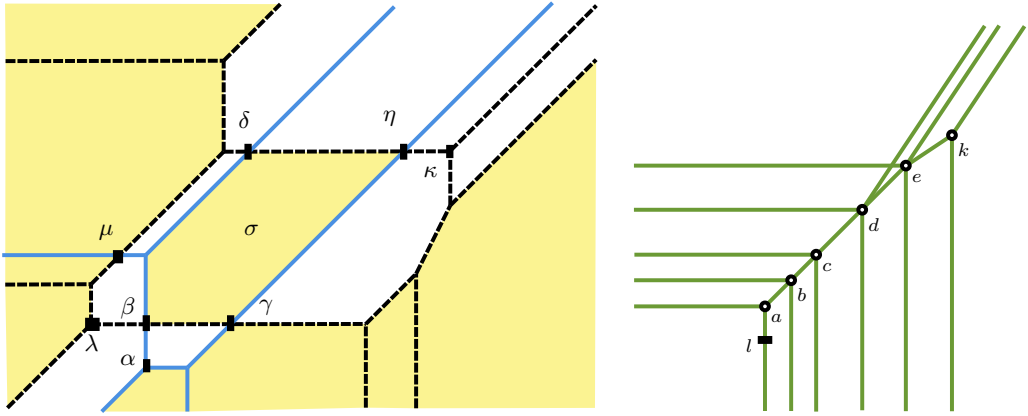
\begin{figure}[h]
%\tikzset{every picture/.style={line width=0.75pt}} %set default line width to 0.75pt        
\begin{tabular}{cc}
\tikzset{every picture/.style={line width=0.75pt}} %set default line width to 0.75pt        

\begin{tikzpicture}[x=.55pt,y=.55pt,yscale=-1,xscale=1]
%uncomment if require: \path (0,292); %set diagram left start at 0, and has height of 292

%Shape: Polygon [id:ds055556788040602156] 
\draw  [color={rgb, 255:red, 248; green, 231; blue, 28 }  ,draw opacity=0 ][fill={rgb, 255:red, 248; green, 231; blue, 28 }  ,fill opacity=0.48 ] (326.57,148.98) -- (426.9,48.65) -- (426.07,288.33) -- (303.07,288.58) -- (303.07,195.98) -- cycle ;
%Shape: Polygon [id:ds8639251542693699] 
\draw  [color={rgb, 255:red, 248; green, 231; blue, 28 }  ,draw opacity=0 ][fill={rgb, 255:red, 248; green, 231; blue, 28 }  ,fill opacity=0.48 ] (268.5,230.55) -- (303.07,195.98) -- (303.07,288.58) -- (268.5,288.15) -- cycle ;
%Shape: Polygon [id:ds6427938677074011] 
\draw  [color={rgb, 255:red, 248; green, 231; blue, 28 }  ,draw opacity=0 ][fill={rgb, 255:red, 248; green, 231; blue, 28 }  ,fill opacity=0.48 ] (145.76,260.62) -- (176.53,229.98) -- (268.5,230.55) -- (268.5,288.15) -- (145.76,290.84) -- cycle ;
%Shape: Polygon [id:ds7389620295050786] 
\draw  [color={rgb, 255:red, 248; green, 231; blue, 28 }  ,draw opacity=0 ][fill={rgb, 255:red, 248; green, 231; blue, 28 }  ,fill opacity=0.48 ] (117.09,260.62) -- (145.76,260.62) -- (145.76,290.84) -- (86.87,290.84) -- cycle ;
%Shape: Polygon [id:ds25501021248175526] 
\draw  [color={rgb, 255:red, 248; green, 231; blue, 28 }  ,draw opacity=0 ][fill={rgb, 255:red, 248; green, 231; blue, 28 }  ,fill opacity=0.48 ] (20.5,203.45) -- (79.5,203.2) -- (79.5,230.8) -- (22,288.3) -- cycle ;
%Shape: Polygon [id:ds18911663946777024] 
\draw  [color={rgb, 255:red, 248; green, 231; blue, 28 }  ,draw opacity=0 ][fill={rgb, 255:red, 248; green, 231; blue, 28 }  ,fill opacity=0.48 ] (170.72,49.38) -- (170.72,111.98) -- (79.5,203.2) -- (20.5,203.45) -- (20.72,49.63) -- cycle ;
%Shape: Polygon [id:ds5135482530276385] 
\draw  [color={rgb, 255:red, 248; green, 231; blue, 28 }  ,draw opacity=0 ][fill={rgb, 255:red, 248; green, 231; blue, 28 }  ,fill opacity=0.48 ] (209.7,10.4) -- (170.72,49.38) -- (20.72,49.63) -- (20.37,10.03) -- cycle ;
%Shape: Polygon [id:ds5778700368470849] 
\draw  [color={rgb, 255:red, 248; green, 231; blue, 28 }  ,draw opacity=0 ][fill={rgb, 255:red, 248; green, 231; blue, 28 }  ,fill opacity=0.48 ] (187.53,112.48) -- (294.03,111.98) -- (176.53,229.98) -- (117.03,230.98) -- (117.09,183.18) -- cycle ;
%Straight Lines [id:da30515083097449913] 
\draw [color={rgb, 255:red, 0; green, 0; blue, 0 }  ,draw opacity=1 ][line width=1.5]  [dash pattern={on 3.75pt off 0.75pt}]  (22,288.3) -- (79.5,230.8) ;
%Straight Lines [id:da5108865078194036] 
\draw [color={rgb, 255:red, 74; green, 144; blue, 226 }  ,draw opacity=1 ][line width=1.5]    (117.09,260.62) -- (86.87,290.84) ;
%Straight Lines [id:da21129176236884029] 
\draw [color={rgb, 255:red, 0; green, 0; blue, 0 }  ,draw opacity=1 ][line width=1.5]  [dash pattern={on 3.75pt off 0.75pt}]  (79.5,230.8) -- (79.5,203.2) ;
%Straight Lines [id:da15028321256575894] 
\draw [color={rgb, 255:red, 0; green, 0; blue, 0 }  ,draw opacity=1 ][line width=1.5]  [dash pattern={on 3.75pt off 0.75pt}]  (20.5,203.45) -- (79.5,203.2) ;
%Straight Lines [id:da4678157198020678] 
\draw [color={rgb, 255:red, 0; green, 0; blue, 0 }  ,draw opacity=1 ][line width=1.5]  [dash pattern={on 3.75pt off 0.75pt}]  (79.5,203.2) -- (170.72,111.98) ;
%Straight Lines [id:da6805243404944455] 
\draw [color={rgb, 255:red, 0; green, 0; blue, 0 }  ,draw opacity=1 ][line width=1.5]  [dash pattern={on 3.75pt off 0.75pt}]  (79.5,230.8) -- (268.5,230.55) ;
%Straight Lines [id:da34611855214536036] 
\draw [color={rgb, 255:red, 0; green, 0; blue, 0 }  ,draw opacity=1 ][line width=1.5]  [dash pattern={on 3.75pt off 0.75pt}]  (303.07,195.98) -- (326.57,148.98) ;
%Straight Lines [id:da029162351601266012] 
\draw [color={rgb, 255:red, 0; green, 0; blue, 0 }  ,draw opacity=1 ][line width=1.5]  [dash pattern={on 3.75pt off 0.75pt}]  (268.5,230.55) -- (303.07,195.98) ;
%Straight Lines [id:da8430644713496643] 
\draw [color={rgb, 255:red, 0; green, 0; blue, 0 }  ,draw opacity=1 ][line width=1.5]  [dash pattern={on 3.75pt off 0.75pt}]  (326.57,148.98) -- (326.57,111.38) ;
%Straight Lines [id:da9494088057938973] 
\draw [color={rgb, 255:red, 0; green, 0; blue, 0 }  ,draw opacity=1 ][line width=1.5]  [dash pattern={on 3.75pt off 0.75pt}]  (170.72,111.98) -- (326.72,111.73) ;
%Straight Lines [id:da35518035336814613] 
\draw [color={rgb, 255:red, 0; green, 0; blue, 0 }  ,draw opacity=1 ][line width=1.5]  [dash pattern={on 3.75pt off 0.75pt}]  (268.5,288.15) -- (268.5,230.55) ;
%Straight Lines [id:da9482771410564933] 
\draw [color={rgb, 255:red, 0; green, 0; blue, 0 }  ,draw opacity=1 ][line width=1.5]  [dash pattern={on 3.75pt off 0.75pt}]  (303.07,288.58) -- (303.07,195.98) ;
%Straight Lines [id:da847838061034919] 
\draw [color={rgb, 255:red, 0; green, 0; blue, 0 }  ,draw opacity=1 ][line width=1.5]  [dash pattern={on 3.75pt off 0.75pt}]  (326.57,148.98) -- (426.9,48.65) ;
%Straight Lines [id:da2174767689045919] 
\draw [color={rgb, 255:red, 0; green, 0; blue, 0 }  ,draw opacity=1 ][line width=1.5]  [dash pattern={on 3.75pt off 0.75pt}]  (170.72,49.38) -- (209.7,10.4) ;
%Straight Lines [id:da8018333768829462] 
\draw [color={rgb, 255:red, 0; green, 0; blue, 0 }  ,draw opacity=1 ][line width=1.5]  [dash pattern={on 3.75pt off 0.75pt}]  (170.72,111.98) -- (170.72,49.38) ;
%Straight Lines [id:da5244047963874151] 
\draw [color={rgb, 255:red, 0; green, 0; blue, 0 }  ,draw opacity=1 ][line width=1.5]  [dash pattern={on 3.75pt off 0.75pt}]  (20.72,49.63) -- (170.72,49.38) ;
%Straight Lines [id:da03341507256874243] 
\draw [color={rgb, 255:red, 0; green, 0; blue, 0 }  ,draw opacity=1 ][line width=1.5]  [dash pattern={on 3.75pt off 0.75pt}]  (326.57,111.38) -- (427.05,11.4) ;
%Straight Lines [id:da6957111997768697] 
\draw [color={rgb, 255:red, 74; green, 144; blue, 226 }  ,draw opacity=1 ][line width=1.5]    (117.09,183.18) -- (117.09,260.62) ;
%Straight Lines [id:da6094385298916039] 
\draw [color={rgb, 255:red, 74; green, 144; blue, 226 }  ,draw opacity=1 ][line width=1.5]    (117.09,183.18) -- (18.42,183.18) ;
%Straight Lines [id:da1019354169864013] 
\draw [color={rgb, 255:red, 74; green, 144; blue, 226 }  ,draw opacity=1 ][line width=1.5]    (117.09,183.18) -- (288.09,12.18) ;
%Straight Lines [id:da9754950183704946] 
\draw [color={rgb, 255:red, 74; green, 144; blue, 226 }  ,draw opacity=1 ][line width=1.5]    (145.76,260.62) -- (117.09,260.62) ;
%Straight Lines [id:da6018780685752396] 
\draw [color={rgb, 255:red, 74; green, 144; blue, 226 }  ,draw opacity=1 ][line width=1.5]    (145.76,260.62) -- (145.76,290.84) ;
%Straight Lines [id:da9482764202995962] 
\draw [color={rgb, 255:red, 74; green, 144; blue, 226 }  ,draw opacity=1 ][line width=1.5]    (145.76,260.62) -- (396.87,9.51) ;

\draw [color={rgb, 255:red, 0; green, 0; blue, 0 }  ,draw opacity=1 ][line width=6]    (185,112) -- (190,112) ;

\draw (185,82) node [anchor=north][inner sep=0.75pt]  [font=\small]  {$\delta$};
\draw (200,212) node [anchor=north][inner sep=0.75pt]  [font=\small]  {$\gamma$};
\draw (285,82) node [anchor=north][inner sep=0.75pt]  [font=\small]  {$\eta$};

\draw [color={rgb, 255:red, 0; green, 0; blue, 0 }  ,draw opacity=1 ][line width=6]    (292,112) -- (297,112) ;

\draw [color={rgb, 255:red, 0; green, 0; blue, 0 }  ,draw opacity=1 ][line width=6]    (173,230) -- (178,230) ;

\draw [color={rgb, 255:red, 0; green, 0; blue, 0 }  ,draw opacity=1 ][line width=6]    (115,230) -- (120,230) ;

\draw (80,236) node [anchor=north][inner sep=0.75pt]  [font=\small]  {$\lambda$};

\begin{scope}[xshift = -.05cm, yshift = -.1cm]
\draw [color={rgb, 255:red, 0; green, 0; blue, 0 }  ,draw opacity=1 ][line width=5]    (78,236) -- (88,236) ;
\end{scope}

\draw (105,250) node [anchor=north][inner sep=0.75pt]  [font=\small]  {$\alpha$};

\begin{scope}[xshift = .72cm, yshift = .45cm]
\draw [color={rgb, 255:red, 0; green, 0; blue, 0 }  ,draw opacity=1 ][line width=5]    (78,236) -- (83,236) ;
\end{scope}

\draw (105,210) node [anchor=north][inner sep=0.75pt]  [font=\small]  {$\beta$};
\draw (313,120) node [anchor=north][inner sep=0.75pt]  [font=\small]  {$\kappa$};

\begin{scope}[xshift = 4.75cm, yshift = -2.4cm]
\draw [color={rgb, 255:red, 0; green, 0; blue, 0 }  ,draw opacity=1 ][line width=5]    (78,236) -- (83,236) ;
\end{scope}

\draw (90,160) node [anchor=north][inner sep=0.75pt]  [font=\small]  {$\mu$};

\begin{scope}[xshift = .32cm, yshift = -1cm]
\draw [color={rgb, 255:red, 0; green, 0; blue, 0 }  ,draw opacity=1 ][line width=5]    (78,236) -- (85,236) ;
\end{scope}

\draw (190,160) node [anchor=north][inner sep=0.75pt]  [font=\small]  {$\sigma$};

\end{tikzpicture}

&

\tikzset{every picture/.style={line width=0.75pt}} %set default line width to 0.75pt        

\begin{tikzpicture}[x=0.55pt,y=0.55pt,yscale=-1,xscale=1]
%uncomment if require: \path (0,292); %set diagram left start at 0, and has height of 292

%Straight Lines [id:da5268106211047048] 
\draw [color={rgb, 255:red, 107; green, 154; blue, 53 }  ,draw opacity=1 ][line width=1.5]    (0.33,209.69) -- (90.33,209.69) ;
%Straight Lines [id:da31079110250087905] 
\draw [color={rgb, 255:red, 107; green, 154; blue, 53 }  ,draw opacity=1 ][line width=1.5]    (90.33,209.69) -- (90.33,281.69) ;
%Straight Lines [id:da12249471507745857] 
\draw [color={rgb, 255:red, 107; green, 154; blue, 53 }  ,draw opacity=1 ][line width=1.5]    (90.33,209.69) -- (187.21,112.81) ;
%Straight Lines [id:da9954855151402935] 
\draw [color={rgb, 255:red, 107; green, 154; blue, 53 }  ,draw opacity=1 ][line width=1.5]    (0.33,191.7) -- (108.33,191.7) ;
%Straight Lines [id:da3459575976007143] 
\draw [color={rgb, 255:red, 107; green, 154; blue, 53 }  ,draw opacity=1 ][line width=1.5]    (108.33,191.7) -- (108.33,281.7) ;
%Straight Lines [id:da6379209852072222] 
\draw [color={rgb, 255:red, 107; green, 154; blue, 53 }  ,draw opacity=1 ][line width=1.5]    (0.53,174.1) -- (125.53,174.1) ;
%Straight Lines [id:da6326556934439242] 
\draw [color={rgb, 255:red, 107; green, 154; blue, 53 }  ,draw opacity=1 ][line width=1.5]    (125.53,174.1) -- (125.53,282.1) ;
%Straight Lines [id:da12166156687692886] 
\draw [color={rgb, 255:red, 107; green, 154; blue, 53 }  ,draw opacity=1 ][line width=1.5]    (0.21,112.81) -- (187.21,112.81) ;
%Straight Lines [id:da7120822741325834] 
\draw [color={rgb, 255:red, 107; green, 154; blue, 53 }  ,draw opacity=1 ][line width=1.5]    (187.21,113.81) -- (187.21,281.81) ;
%Straight Lines [id:da1159874860344492] 
\draw [color={rgb, 255:red, 107; green, 154; blue, 53 }  ,draw opacity=1 ][line width=1.5]    (0.21,143.19) -- (157.21,143.19) ;
%Straight Lines [id:da2812365676805547] 
\draw [color={rgb, 255:red, 107; green, 154; blue, 53 }  ,draw opacity=1 ][line width=1.5]    (157.21,143.19) -- (157.21,281.19) ;
%Straight Lines [id:da4633762357450668] 
\draw [color={rgb, 255:red, 107; green, 154; blue, 53 }  ,draw opacity=1 ][line width=1.5]    (157.1,143.28) -- (241.47,16.63) ;
%Straight Lines [id:da8490556380874067] 
\draw [color={rgb, 255:red, 107; green, 154; blue, 53 }  ,draw opacity=1 ][line width=1.5]    (187.22,112.99) -- (218.92,91.86) ;
%Straight Lines [id:da16369524910636057] 
\draw [color={rgb, 255:red, 107; green, 154; blue, 53 }  ,draw opacity=1 ][line width=1.5]    (218.92,91.86) -- (218.92,281.86) ;
%Straight Lines [id:da12304967857637472] 
\draw [color={rgb, 255:red, 107; green, 154; blue, 53 }  ,draw opacity=1 ][line width=1.5]    (218.92,91.86) -- (268.86,16.86) ;
%Straight Lines [id:da12616199456444144] 
\draw [color={rgb, 255:red, 107; green, 154; blue, 53 }  ,draw opacity=1 ][line width=1.5]    (187.21,112.81) -- (251.43,16.43) ;
%Shape: Circle [id:dp5064655724905449] 
\draw  [fill={rgb, 255:red, 255; green, 255; blue, 255 }  ,fill opacity=1 ][line width=1.5]  (87.68,209.69) .. controls (87.68,208.22) and (88.87,207.03) .. (90.33,207.03) .. controls (91.8,207.03) and (92.99,208.22) .. (92.99,209.69) .. controls (92.99,211.16) and (91.8,212.35) .. (90.33,212.35) .. controls (88.87,212.35) and (87.68,211.16) .. (87.68,209.69) -- cycle ;
%Shape: Circle [id:dp26379883647547075] 
\draw  [fill={rgb, 255:red, 255; green, 255; blue, 255 }  ,fill opacity=1 ][line width=1.5]  (105.68,191.7) .. controls (105.68,190.23) and (106.87,189.04) .. (108.33,189.04) .. controls (109.8,189.04) and (110.99,190.23) .. (110.99,191.7) .. controls (110.99,193.17) and (109.8,194.36) .. (108.33,194.36) .. controls (106.87,194.36) and (105.68,193.17) .. (105.68,191.7) -- cycle ;
%Shape: Circle [id:dp9593956056620654] 
\draw  [fill={rgb, 255:red, 255; green, 255; blue, 255 }  ,fill opacity=1 ][line width=1.5]  (122.88,174.1) .. controls (122.88,172.63) and (124.07,171.44) .. (125.53,171.44) .. controls (127,171.44) and (128.19,172.63) .. (128.19,174.1) .. controls (128.19,175.57) and (127,176.76) .. (125.53,176.76) .. controls (124.07,176.76) and (122.88,175.57) .. (122.88,174.1) -- cycle ;
%Shape: Circle [id:dp6172530708903905] 
\draw  [fill={rgb, 255:red, 255; green, 255; blue, 255 }  ,fill opacity=1 ][line width=1.5]  (154.55,143.19) .. controls (154.55,141.72) and (155.74,140.53) .. (157.21,140.53) .. controls (158.68,140.53) and (159.87,141.72) .. (159.87,143.19) .. controls (159.87,144.66) and (158.68,145.85) .. (157.21,145.85) .. controls (155.74,145.85) and (154.55,144.66) .. (154.55,143.19) -- cycle ;
%Shape: Circle [id:dp4243709816845155] 
\draw  [fill={rgb, 255:red, 255; green, 255; blue, 255 }  ,fill opacity=1 ][line width=1.5]  (184.55,112.81) .. controls (184.55,111.34) and (185.74,110.15) .. (187.21,110.15) .. controls (188.68,110.15) and (189.87,111.34) .. (189.87,112.81) .. controls (189.87,114.28) and (188.68,115.47) .. (187.21,115.47) .. controls (185.74,115.47) and (184.55,114.28) .. (184.55,112.81) -- cycle ;
%Shape: Circle [id:dp48539960227164936] 
\draw  [fill={rgb, 255:red, 255; green, 255; blue, 255 }  ,fill opacity=1 ][line width=1.5]  (216.27,91.86) .. controls (216.27,90.39) and (217.45,89.2) .. (218.92,89.2) .. controls (220.39,89.2) and (221.58,90.39) .. (221.58,91.86) .. controls (221.58,93.33) and (220.39,94.52) .. (218.92,94.52) .. controls (217.45,94.52) and (216.27,93.33) .. (216.27,91.86) -- cycle ;

% Text Node
\draw (110.33,195.1) node [anchor=north west][inner sep=0.75pt]  [font=\scriptsize]  {$b$};
% Text Node
\draw (127.53,177.5) node [anchor=north west][inner sep=0.75pt]  [font=\scriptsize]  {$c$};
% Text Node
\draw (159.21,146.59) node [anchor=north west][inner sep=0.75pt]  [font=\scriptsize]  {$d$};
% Text Node
\draw (189.22,116.39) node [anchor=north west][inner sep=0.75pt]  [font=\scriptsize]  {$e$};
% Text Node
\draw (220.92,95.26) node [anchor=north west][inner sep=0.75pt]  [font=\scriptsize]  {$k$};
% Text Node
\draw (92.33,213.09) node [anchor=north west][inner sep=0.75pt]  [font=\scriptsize]  {$a$};

\draw (75,230) node [anchor=north west][inner sep=0.75pt]  [font=\scriptsize]  {$l$};
\begin{scope}[xshift = 1.65cm, yshift = 4.5cm]
\draw [color={rgb, 255:red, 0; green, 0; blue, 0 }  ,draw opacity=1 ][line width=3]    (0,0) -- (10,0) ;
\end{scope}

\end{tikzpicture}
\end{tabular}
\caption{\textbf{(L)} The union of the tropical curves of the polynomials $f_1$ and $f_2$ appearing in Equation~\eqref{eq:main_example}, and forming a subdivision $\Xi$. \textbf{(R)} The tropical curve $\Tdf $
}\label{fig:critical_set-disriminant}
\end{figure}

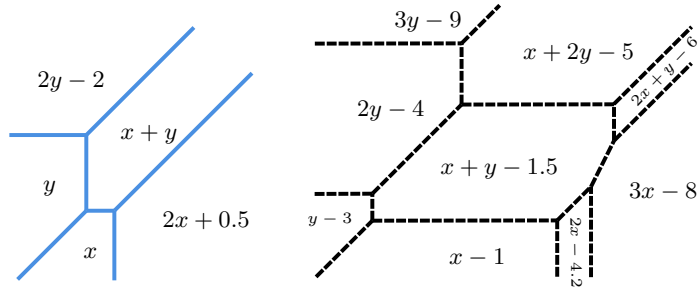
\begin{figure}[h]

\tikzset{every picture/.style={line width=0.75pt}} %set default line width to 0.75pt        

\begin{tikzpicture}[x=0.37pt,y=0.37pt,yscale=-1,xscale=1]
%uncomment if require: \path (0,292); %set diagram left start at 0, and has height of 292

%Straight Lines [id:da10446452160450581] 
\draw [color={rgb, 255:red, 0; green, 0; blue, 0 }  ,draw opacity=1 ][line width=1.5]  [dash pattern={on 3.75pt off 0.75pt}]  (312,288.96) -- (369.5,231.46) ;
%Straight Lines [id:da9452537835778476] 
\draw [color={rgb, 255:red, 74; green, 144; blue, 226 }  ,draw opacity=1 ][line width=1.5]    (77.09,221.28) -- (6.84,291.53) ;
%Straight Lines [id:da7985625879568717] 
\draw [color={rgb, 255:red, 0; green, 0; blue, 0 }  ,draw opacity=1 ][line width=1.5]  [dash pattern={on 3.75pt off 0.75pt}]  (369.5,231.46) -- (369.5,203.86) ;
%Straight Lines [id:da3136085178709469] 
\draw [color={rgb, 255:red, 0; green, 0; blue, 0 }  ,draw opacity=1 ][line width=1.5]  [dash pattern={on 3.75pt off 0.75pt}]  (310.5,204.11) -- (369.5,203.86) ;
%Straight Lines [id:da12413319285074886] 
\draw [color={rgb, 255:red, 0; green, 0; blue, 0 }  ,draw opacity=1 ][line width=1.5]  [dash pattern={on 3.75pt off 0.75pt}]  (369.5,203.86) -- (460.72,112.65) ;
%Straight Lines [id:da13185778254373615] 
\draw [color={rgb, 255:red, 0; green, 0; blue, 0 }  ,draw opacity=1 ][line width=1.5]  [dash pattern={on 3.75pt off 0.75pt}]  (369.5,231.46) -- (558.5,231.21) ;
%Straight Lines [id:da577862705828506] 
\draw [color={rgb, 255:red, 0; green, 0; blue, 0 }  ,draw opacity=1 ][line width=1.5]  [dash pattern={on 3.75pt off 0.75pt}]  (593.07,196.65) -- (616.57,149.65) ;
%Straight Lines [id:da19149247111043854] 
\draw [color={rgb, 255:red, 0; green, 0; blue, 0 }  ,draw opacity=1 ][line width=1.5]  [dash pattern={on 3.75pt off 0.75pt}]  (558.5,231.21) -- (593.07,196.65) ;
%Straight Lines [id:da26612796018220963] 
\draw [color={rgb, 255:red, 0; green, 0; blue, 0 }  ,draw opacity=1 ][line width=1.5]  [dash pattern={on 3.75pt off 0.75pt}]  (616.57,149.65) -- (616.57,112.05) ;
%Straight Lines [id:da8506855950447363] 
\draw [color={rgb, 255:red, 0; green, 0; blue, 0 }  ,draw opacity=1 ][line width=1.5]  [dash pattern={on 3.75pt off 0.75pt}]  (460.72,112.65) -- (616.72,112.4) ;
%Straight Lines [id:da2058348453961224] 
\draw [color={rgb, 255:red, 0; green, 0; blue, 0 }  ,draw opacity=1 ][line width=1.5]  [dash pattern={on 3.75pt off 0.75pt}]  (558.5,288.81) -- (558.5,231.21) ;
%Straight Lines [id:da34669725411797747] 
\draw [color={rgb, 255:red, 0; green, 0; blue, 0 }  ,draw opacity=1 ][line width=1.5]  [dash pattern={on 3.75pt off 0.75pt}]  (593.07,289.25) -- (593.07,196.65) ;
%Straight Lines [id:da7497563663433453] 
\draw [color={rgb, 255:red, 0; green, 0; blue, 0 }  ,draw opacity=1 ][line width=1.5]  [dash pattern={on 3.75pt off 0.75pt}]  (616.57,149.65) -- (695.78,70.43) ;
%Straight Lines [id:da5417766928214285] 
\draw [color={rgb, 255:red, 0; green, 0; blue, 0 }  ,draw opacity=1 ][line width=1.5]  [dash pattern={on 3.75pt off 0.75pt}]  (460.72,50.05) -- (499.7,11.06) ;
%Straight Lines [id:da04701565252440765] 
\draw [color={rgb, 255:red, 0; green, 0; blue, 0 }  ,draw opacity=1 ][line width=1.5]  [dash pattern={on 3.75pt off 0.75pt}]  (460.72,112.65) -- (460.72,50.05) ;
%Straight Lines [id:da7993220657224355] 
\draw [color={rgb, 255:red, 0; green, 0; blue, 0 }  ,draw opacity=1 ][line width=1.5]  [dash pattern={on 3.75pt off 0.75pt}]  (310.72,50.3) -- (460.72,50.05) ;
%Straight Lines [id:da6283927565544505] 
\draw [color={rgb, 255:red, 0; green, 0; blue, 0 }  ,draw opacity=1 ][line width=1.5]  [dash pattern={on 3.75pt off 0.75pt}]  (616.57,112.05) -- (695.93,33.18) ;
%Straight Lines [id:da11545709215986799] 
\draw [color={rgb, 255:red, 74; green, 144; blue, 226 }  ,draw opacity=1 ][line width=1.5]    (77.09,143.84) -- (77.09,221.28) ;
%Straight Lines [id:da9911909802797308] 
\draw [color={rgb, 255:red, 74; green, 144; blue, 226 }  ,draw opacity=1 ][line width=1.5]    (77.09,143.84) -- (-1.58,143.84) ;
%Straight Lines [id:da5996308327882288] 
\draw [color={rgb, 255:red, 74; green, 144; blue, 226 }  ,draw opacity=1 ][line width=1.5]    (77.09,143.84) -- (187.13,33.8) ;
%Straight Lines [id:da9578804840293824] 
\draw [color={rgb, 255:red, 74; green, 144; blue, 226 }  ,draw opacity=1 ][line width=1.5]    (105.76,221.28) -- (77.09,221.28) ;
%Straight Lines [id:da4327109435009804] 
\draw [color={rgb, 255:red, 74; green, 144; blue, 226 }  ,draw opacity=1 ][line width=1.5]    (105.76,221.28) -- (105.76,292.51) ;
%Straight Lines [id:da45860114499395643] 
\draw [color={rgb, 255:red, 74; green, 144; blue, 226 }  ,draw opacity=1 ][line width=1.5]    (105.76,221.28) -- (246.51,80.53) ;

% Text Node
\draw (71,256.4) node [anchor=north west][inner sep=0.75pt]    {\small $x$};
% Text Node
\draw (30,187.4) node [anchor=north west][inner sep=0.75pt]    {\small $y$};
% Text Node
\draw (109,130.4) node [anchor=north west][inner sep=0.75pt]    {\small $x+y$};
% Text Node
\draw (152,219.4) node [anchor=north west][inner sep=0.75pt]    {\small $2x+0.5$};
% Text Node
\draw (24,75.6) node [anchor=north west][inner sep=0.75pt]    {\small $2y-2$};
% Text Node
\draw (445,257.4) node [anchor=north west][inner sep=0.75pt]    {\small $x-1$};
% Text Node
\draw (300,220.4) node [anchor=north west][inner sep=0.75pt]    {\tiny $y-3$};
% Text Node
\draw (436,165.4) node [anchor=north west][inner sep=0.75pt]    {\small $x+y-1.5$};
% Text Node
\draw (585.57,224.37) node [anchor=north west][inner sep=0.75pt]  [rotate=-89.01]  {\tiny $2x-4.2$};
% Text Node
\draw (351,105.4) node [anchor=north west][inner sep=0.75pt]    {\small $2y-4$};
% Text Node
\draw (628.39,107.76) node [anchor=north west][inner sep=0.75pt]  [rotate=-313.75]  {\tiny $2x+y-6$};
% Text Node
\draw (520.5,47.6) node [anchor=north west][inner sep=0.75pt]    {{\small $x+2y-5$}};
% Text Node
\draw (629,189.8) node [anchor=north west][inner sep=0.75pt]    {{\small $3x-8$}};
% Text Node
\draw (389,15) node [anchor=north west][inner sep=0.75pt]    {\small $3y-9$};

\end{tikzpicture}
\caption{The tropical curves corresponding to the polynomials~\eqref{eq:main_example_trop}.}\label{fig:two_tropica_curves}
\end{figure}

%Theorem~\ref{thm:main} asserts that one can recover $\Val(D(f))$ using only the data of the tropical polynomial map $f^{\trop}$ instead of computing the equation of the polynomial $\cd_f$, followed by tracing out its tropical curve from the tropical polynomial $\cd^{\trop}_f$. 
%supports $A_1,A_2\subset\N^2$, and the valuations of the coefficients of the polynomials. 

%Its \emph{Newton polytope} is the convex hull in $\R^n$ of the support
%As it is conventional in tropical geometry, we show that 
%our correspondence 
\subsubsection{Newton polytope for complex maps}
%Theorem~\ref{thm:main} can be used to compute discrete invariants related to the classical objects in question. 
The proof of the following result is constructive and gives rise to a recipe (see \S\ref{sec:Newton-pol}) for computing the Newton polytope of a complex polynomial map. Further possible applications of Theorem~\ref{thm:main} to problems from affine geometry are discussed in \S\ref{sec:future_work}.
%other possible research directions aiming to using Theorem~\ref{thm:main} to tackle some open problems in affine geometry.
%As it is conventional in tropical geometry, we show that our correspondence Theorem~\ref{thm:main} can be used to compute discrete invariants of the classical objects in question. The proof of the following result is constructive and gives rise to a recipe (see \S\ref{sec:Newton-pol}) for the Newton polytope of a complex polynomial map. 
%Several other possible research directions aiming to using Theorem~\ref{thm:main} to tackle some open problems in affine geometry.

%As tropical curves determine invariants of classical curves, Theorem~\ref{thm:main}
%The following result is one application of Theorem~\ref{thm:main}.
%, we provide a method for computing the Newton polytope of the discriminant a generic complex polynomial map on the plane with a fixed support.
\begin{theorem}\label{th:Newton-polytope}
Let $A_1$ and $A_2$ be two finite non-empty subsets of $\N^2\setminus \{(0,0)\}$. Then, there exists a Zariski open $\tilde{\Omega}\subset \C[A_1,A_2]$ consisting of pairs $f$ for which Theorem~\ref{thm:main} can be used to recover the Newton polytope of the discriminant $D(f)$ without computing $\cd_f$.
%
%Let $A_1$ and $A_2$ be two finite non-empty subsets of $\N^2\setminus \{(0,0)\}$ so that a generic complex map $f\in\C[A_1,A_2]$ is dominant. Then, there exists a Zariski open $\tilde{\Omega}\subset \C[A_1,A_2]$ consisting of pairs $f$ for which Theorem~\ref{thm:main} can be used to recover the Newton polytope of the discriminant $D(f)$ without computing the polynomial $\cd_f$.
\end{theorem}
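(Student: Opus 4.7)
The plan is to produce the Newton polytope of $\cd_f$ from the recession data of the tropical curve $\Val(D(\tilde{f}))$, where $\tilde{f} \in \K[A_1,A_2]$ is a Puiseux-series lift of $f$ chosen so that Theorem~\ref{thm:main} applies. The underlying principle is Kapranov duality applied to the discriminant polynomial: since the closure of $\Val(D(\tilde{f}))$ is the tropical hypersurface of $\cd_{\tilde{f}}$, each of its unbounded rays is lattice-perpendicular to a unique edge of the Newton polytope of $\cd_{\tilde{f}}$, carrying the lattice length of that edge as its weight. Hence the weighted set of unbounded rays of $\Val(D(\tilde{f}))$ determines the Newton polytope of $\cd_{\tilde{f}}$ up to translation.

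To carry out the lift, given $f \in \C[A_1,A_2]$ with coefficients $(c_\alpha)_{\alpha \in A_1 \cup A_2}$, I would choose generic real exponents $(r_\alpha)$ and set $\tilde{f}_i := \sum_{\alpha \in A_i} c_\alpha t^{r_\alpha} x^\alpha$. The components of $\tilde{f}^{\trop}$ are then $F_i(u,v) = \max_{\alpha \in A_i}(r_\alpha + \alpha \cdot (u,v))$, and for generic $(r_\alpha)$ the resulting subdivision $\Xi$ of $\R^2$ satisfies the transversality condition of Definition~\ref{def:Transv}, so $\tilde{f}$ lies in $\Omega$. Theorem~\ref{thm:main} together with the recipe $\Phi_F$ of Definition~\ref{def:main} then assembles $\Val(D(\tilde{f}))$ combinatorially; its unbounded rays, together with their weights, are determined by the unbounded critical cells of $\Xi$ and the linear restrictions of $F$ on them, and suffice to recover the Newton polytope of $\cd_{\tilde{f}}$ via the duality of the previous paragraph.

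It remains to cut out a Zariski open $\tilde{\Omega} \subset \C[A_1,A_2]$ on which the Newton polytopes of $\cd_f$ and $\cd_{\tilde{f}}$ coincide. Here I would invoke the fact that the discriminant construction is universal in the coefficients: for each exponent $\beta$, the coefficient of $y^\beta$ in $\cd_g$, as $g$ varies in $R[A_1,A_2]$ over any commutative ring $R$, is a fixed polynomial $Q_\beta \in \Z[(c_\alpha)]$ depending only on $(A_1,A_2)$. An elementary Puiseux-valuation argument then shows that for generic $(c_\alpha)$ and $(r_\alpha)$ one has $Q_\beta((c_\alpha t^{r_\alpha})) \neq 0$ exactly when $Q_\beta((c_\alpha)) \neq 0$, which in turn happens exactly when $Q_\beta \not\equiv 0$. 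Taking $\tilde{\Omega}$ to be the complement of the union of $\{Q_\beta = 0\} \subset \C[A_1,A_2]$ over the finitely many $\beta$ with $Q_\beta \not\equiv 0$ therefore guarantees that the supports, and hence the Newton polytopes, of $\cd_f$ and $\cd_{\tilde{f}}$ agree.

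The main obstacle I foresee is the weight calibration: Theorem~\ref{thm:main} as stated outputs $\Val(D(\tilde{f}))$ as a set, whereas the Newton-polytope recovery requires the correct multiplicities on unbounded rays. One must verify that a suitable refinement of $\Phi_F$ assigns, to each unbounded ray, the weight that Kapranov's theorem predicts for $\cd_{\tilde{f}}^{\trop}$. I expect this to require tracking intersection multiplicities through the critical-cell classification of Proposition~\ref{prop:t-crit_mixed-cells}, case by case according to the combinatorial types of unbounded critical cells listed in Definition~\ref{def:main}, and is where most of the technical work will lie.
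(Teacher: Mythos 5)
Your first and third paragraphs do match the paper's strategy: the paper's Lemma~5.2 (lem:complex-and-Puiseux) is exactly your universal-coefficient argument, producing Zariski opens in $\K[A_1,A_2]$ and $\C[A_1,A_2]$ on which all discriminants share one Newton polytope $\Delta$, and the paper likewise reads the dual fan of $\Delta$ off the unbounded rays of $\Val(D(f))$ supplied by Theorem~\ref{thm:main} together with Proposition~\ref{prop:mixed_subd}(4). The problem is the step you defer to ``where most of the technical work will lie'': recovering the lattice lengths of the edges of $\Delta$. Theorem~\ref{thm:main} outputs $\Val(D(f))$ only as a set, $\Phi_F$ carries no multiplicities, and Kapranov duality needs the weights of the rays. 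Moreover several unbounded rays can be parallel (the collections $\cE_2,\cE_4,\cE_5$ in the paper's Example~5.3 each consist of several parallel half-lines), so the length of an edge is a sum of unknown weights over such a collection; a ``suitable refinement of $\Phi_F$'' assigning these weights is precisely the missing mathematical content, and you give no argument that the critical-cell case analysis of Proposition~\ref{prop:t-crit_mixed-cells} actually yields it. As written, the proposal establishes the fan of $\Delta$ but not $\Delta$ itself.

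The paper closes this gap by a different device that avoids weights altogether. It chooses generic binomials $B\in\K[w_1,w_2]$ whose tropical curves are lines meeting $\Val(D(f))$ only along prescribed collections of unbounded rays, counts the points of $\VK(B)\cap D(f)$ by Bernstein's theorem as $\MV(\cN(B),\Delta)=\sum_{i\in K}\ell_i\,|\det(u_i,\nu)|$, and identifies this count with $\MV\big(\cN(\cj_f),\cN(B\circ f)\big)$ (Claim~5.5), a quantity computable purely combinatorially from the supports $A_1,A_2$ (Remark~5.6). Varying the line produces a linear system $D\cdot\ell=M$ in the edge lengths, and Claim~5.9 shows the lines can be chosen so that $D$ is non-singular, which determines $\ell$ and hence $\Delta$ up to translation. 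To complete your route you would either have to prove a weighted (multiplicity-carrying) version of Theorem~\ref{thm:main}, which is substantially more than a verification, or substitute an argument of the paper's mixed-volume type for the edge-length step.
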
 
All the proofs in this paper rely on classical results from tropical and toric geometries~\cite{Kat09,Rab12,OP13}, which are formulated in a higher-dimensional setting. Subsequently, albeit requiring much more elaborate analysis, our results are expected to extend for maps over a space of arbitrary dimension. 
%This is the subject of a future work.
\subsection{Related work} 
Due to their connection to ReLu neural networks~\cite{ZNL18}, tropical rational maps $\R^n\to\R^n$ have been the subject of close attention~\cite{grigoriev2022tropical}. Grigoriev and Radchenko have considered a tropical version of the Jacobian conjecture for tropical rational maps, and have shown that it holds true~\cite{grigoriev2022tropical}. In particular, they provided a sufficient condition for a tropical polynomial map to be an isomorphism. 

The \emph{tropical $A$-discriminant} $\cT\nabla_A$ is the subset of all tropical polynomials with a fixed support $A$, whose tropical hypersurface lifts to a singular hypersurface over $\K$~\cite{DES07}. These \emph{tropical singularities} (c.f.~\cite{markwig2012tropical}) 
can be effectively tested through an operation called \emph{Euler derivative}~\cite{DiTa12}. One can then use the
``Cayley trick'' to classify tropical non-transversal intersections between two tropical curves (see e.g., Definition~\ref{def:Transv} or~\cite{alonso2008tropical}) by expressing them as singular tropical hypersurfaces~\cite[\S5]{DiTa12}. The tropical discriminant of a polynomial map is a more subtle concept as the polynomials themselves are fixed, while we describe the valuation of their corresponding two constant terms that give rise to the above singularity. Accordingly, the discriminant of a map can be viewed as a one-dimensional subset of $\cT\nabla_A$.
%----------------------------
%\subsection{Proving the main result} Our method follows two steps. The first one describes the positions of different cells in $\Xi$ relative to the tropicalization, $\Val(C(f))$, of the critical points $C(f)$ of $f$, and vice-versa (Proposition~\ref{prop:t-crit_mixed-cells}). It turns out that such a configuration depends only on the map $F$ as long as $f$ is generic enough (\S\ref{sub:tropical_critical}). Namely, \emph{stable} tropical intersections (see Definition~\ref{def:Transv}) 
%are valuations of classical stable intersections (see Lemma~\ref{lem:transversal}). However, tropical unstable intersections present only candidates for valuations of transversal points in $\TK$. For some values $a,b\in\K\setminus 0$, the tropical curve $T_1(\val a):=\Val f_1^{-1}(a)$ intersects $T_2( \val b):=\Val f_2^{-1}(b)$ at unstable points in some cells of $\Xi$. Detecting at which cells this is the case, requires a thorough analysis on the local equations of $\cj_f$. 
%
%As for the second step of the proof, note that any critical point $z^c$ of the map $f$ will end up having its valuation inside one of the cells $\xi$ in $\Xi$. Accordingly, we decompose the set $\Tdf$ into subsets $\tdf(\xi)\subset \R^2$, consisting of the valuations of all $f(z^c)$ with $\Val(z^c)\in\xi$. We describe in Proposition~\ref{prop:super-critical-mixed-cells} the set $\tdf(\xi)$ for each type $\xi\in\Xi$ with respect to Definition~\ref{def:main}, and thus define the function $\Phi_F$. 
%----------------------------

\subsection{Organization of the paper} In \S\ref{sec:definitions-descriptions} we introduce notations and known results related to tropical geometry. 
In \S\ref{sec:class-trop-maps} we give an explicit description of the function $\Phi$ and the set $\Omega$ from Theorem~\ref{thm:main}. We also establish the necessary technical results and definitions that relate critical points and critical values of the map $f$ with their tropical analogues for $f^{\trop}$. These will be central to the proof of Theorem~\ref{thm:main} in \S\ref{sec:proof-main-th}. We prove Theorem~\ref{th:Newton-polytope} in \S\ref{sec:Newton-pol} by showing how to compute the Newton polytope of the discriminant of polynomial maps on the complex $2$-torus. \S\ref{sec:tropical_critical} and \S\ref{sec:supercr} are devoted for the proofs of the two main technical results, Propositions~\ref{prop:t-crit_mixed-cells} and~\ref{prop:super-critical-mixed-cells}, introduced in~\S\ref{sec:class-trop-maps}. In \S\ref{sec:future_work} we present several possible research directions aiming at applications of Theorem~\ref{thm:main} to open problems from affine geometry. The proof of Proposition~\ref{prop:t-crit_mixed-cells} is a case-by-case analysis, and its repetitive parts are left as an appendix.

\subsection*{Acknowledgments} The author is grateful to Timo de Wolff for his valuable remarks on the presentation of the manuscript.

\section{Some basics of tropical geometry}\label{sec:definitions-descriptions}

We state in this section some well-known facts about tropical geometry (see e.g.~\cite{BIMS15,richter2005first,MS15}, and the references therein). Some of the exposition and notations here are taken from~\cite{Br-deMe11,BB13,EH18}. We start by introducing in \S\ref{sub:base-field} the field $\K$ over which the polynomial maps will be defined. We will describe in \S\ref{sub:trop-polynomials_hypersurfaces} \emph{tropical hypersurfaces} and their relations to zero loci of polynomials over $\K$. Then, we define in \S\ref{subs:tropical_polynomials} \emph{tropical polynomials}, and describe the corresponding subdivisions obtained. 

\subsection{The base field}\label{sub:base-field}  A \emph{locally convergent generalized Puiseux series} is a formal series of the form 
\[
c(t)=\underset{r\in R}{\sum} c_rt^r,
\] where $R\subset\mathbb{R}$ is a well-ordered set, all $c_r\in\mathbb{C}$, and the series is convergent for $t>0$ small enough. We denote by $\K$ the set of all locally convergent generalized Puiseux series. It is an algebraically-closed field of characteristic 0~\cite{markwig2009field}, and can be equipped with the function 
\[
  \begin{array}{lccc}
    \displaystyle \val: & \mathbb{K} & \longrightarrow & \mathbb{R}\cup\{-\infty\} \\
    
    \displaystyle \ & 0 & \longmapsto  & -\infty \\
    
     \displaystyle \ & \underset{r\in R}{\sum} c_rt^r\neq 0 & \longmapsto  & -\min_R\{r\ |\ c_r\neq 0\}.
  \end{array}
\] We will call $\val$ the \emph{valuation function} or simply, \emph{valuation}. This extends to a map $\Val:\K^n\rightarrow(\mathbb{R}\cup\{-\infty\})^n$ by evaluating $\val$ coordinate-wise, i. e., $\Val(z_1,\ldots , z_n)=(\val(z_1),\ldots , \val(z_n))$.

\begin{remark}
In the standard literature (c.f.~\cite{efrat2008valuations}), the valuation of an element in $\K$ is defined as $-\val$. Our choice for it to be the opposite makes more geometrical the duality between subdivisions of Newton polytopes and tropical curves (c. f., \S\ref{subsub:subdivisions2}). 
\end{remark}

\subsection{Tropical hypersurfaces}\label{sub:trop-polynomials_hypersurfaces}

Let $f$ be a polynomial in $\K[z_1,\ldots,z_n]$.

\begin{notation}\label{not:supp}
We will express $f$ as a linear combination 
\[
\sum_{w\in \mathcal{W}} c_a z^a
\]
of monomials $z^a:=z_1^{a_1}\cdots z_n^{a_n}$, where $A$ is a finite subset of $\N^n$, and $c_a\in \K\setminus 0$. We call $A$ the \emph{support} of $f$. The \emph{Newton polytope}, $\cN(f)$, is the convex hull of $A$ in $\R^n$. The notation $\VK(f)$ refers to the zero set $\{z\in(\K\setminus 0)^n~|~f(z)=0\}$. The \emph{tropical hypersurface of $f$} is the subset in $\R^n$ defined as $\VT (f):=\Val(\VK (f)).$
\end{notation}
Consider the map
\[
  \begin{array}{lccc}
    \displaystyle \nu_f: & \mathbb{Z}^n & \longrightarrow & \mathbb{R}\cup\{-\infty\} \\
    
    \displaystyle \ & a & \longmapsto  &  \begin{cases}
      \val(c_a), & \text{if $ a\in A$,}  \\
      -\infty, & \text{otherwise.}
    \end{cases} \\
    
  \end{array}
\] Its \emph{Legendre transform} is a piecewise-linear convex function 
\[
  \begin{array}{lccc}
    \displaystyle \mathcal{L}(\nu_f): & \R^n & \longrightarrow & \mathbb{R} \\
    
    \displaystyle \ & x & \longmapsto  &\displaystyle \max_{a\in A}\{\langle x, a\rangle + \nu_f(a)\}, \\
    
  \end{array}
\] where $\langle ~,~\rangle:~\R^n\times\R^n\rightarrow\R$ is the standard Eucledian product. The set of points $x\in\R^n$ at which $\mathcal{L}(\nu_f)$ is not differentiable is called the \emph{corner locus} of $\mathcal{L}(\nu_f)$. We have the famous fundamental Theorem of Kapranov~\cite{Kap00},~\cite[Theorem 3.13]{MS15}.

\begin{theorem}[Kapranov]\label{th:Kapranov}
The tropical hypersurface $\VT (f)$ of a polynomial $f$ defined over $\K$ is the corner locus of its Legendre transform $\mathcal{L}(\nu_f)$.
\end{theorem}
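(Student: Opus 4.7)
The plan is to establish Kapranov's theorem by verifying the two inclusions between $\VT(f)$ and the corner locus $\Sigma$ of $\mathcal{L}(\nu_f)$ separately, as the two containments rely on rather different ideas.

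First I would prove the inclusion $\VT(f)\subseteq\Sigma$. Pick any $z\in\VK(f)$ and set $x:=\Val(z)$. Using multiplicativity of the valuation, one has $\val(c_a z^a)=\val(c_a)+\langle x,a\rangle=\nu_f(a)+\langle x,a\rangle$, so the identity $\sum_{a\in A}c_a z^a=0$ forces the monomials with dominant $t$-order to cancel pairwise. This in turn means that $\max_{a\in A}\{\nu_f(a)+\langle x,a\rangle\}$ must be attained by at least two distinct indices, which is precisely the condition for $\mathcal{L}(\nu_f)$ to fail to be differentiable at $x$; hence $x\in\Sigma$.

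The reverse inclusion $\Sigma\subseteq\VT(f)$ is more delicate; given $x\in\Sigma$ I need to construct $z\in(\K\setminus 0)^n$ with $\Val(z)=x$ and $f(z)=0$. My plan is to do this in two stages. Let $A_x\subseteq A$ be the set of indices attaining $M:=\max_{a\in A}\{\nu_f(a)+\langle x,a\rangle\}$; by hypothesis $|A_x|\ge 2$. For each $a\in A_x$, expand $c_a=\tilde c_a\, t^{-\nu_f(a)}+(\text{higher-order terms in }t)$ with $\tilde c_a\in\C\setminus 0$, and form the \emph{initial polynomial}
\[
\mathrm{in}_x(f)(w):=\sum_{a\in A_x}\tilde c_a\, w^a\in\C[w_1^{\pm 1},\ldots,w_n^{\pm 1}].
\]
Since it has at least two monomials, this complex Laurent polynomial cuts out a non-empty hypersurface in $(\C\setminus 0)^n$; picking a point $w^{(0)}$ on it and setting $z^{(0)}:=(t^{-x_1}w^{(0)}_1,\ldots,t^{-x_n}w^{(0)}_n)$, a short computation shows that $\Val(z^{(0)})=x$ while $\val(f(z^{(0)}))<M$, so $z^{(0)}$ is an approximate root of $f$ with the correct valuation.

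The main obstacle will then be to upgrade $z^{(0)}$ to an exact root of $f$ while preserving its valuation. My plan is an iterative Hensel/Newton correction scheme that generates a sequence $z^{(k)}$ whose error $f(z^{(k)})$ has strictly decreasing $t$-order, so that the limit $z\in(\K\setminus 0)^n$ is a locally convergent Puiseux series satisfying $\Val(z)=x$ and $f(z)=0$. The delicate points here are uniform strict decay of the error and genuine (rather than merely formal) convergence of the resulting series; I would handle them either by invoking an appropriate Hensel-type lemma adapted to the algebraically closed field $\K$ of locally convergent generalized Puiseux series, or, more concretely, by slicing through $x$ along a generic affine line and reducing to the one-variable case, where the Newton polygon directly encodes the valuations of the roots of a univariate polynomial in $\K[z]$.
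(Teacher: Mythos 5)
The paper does not actually prove Theorem~\ref{th:Kapranov}: it is imported as a known result, with references to Kapranov's preprint and to \cite[Theorem 3.13]{MS15}, and the only thing proved from it in the text is Corollary~\ref{cor:lower-order}. So there is no in-paper argument to compare with; what you have written is a sketch of the standard proof from the literature, and as a strategy it is the right one. Your first inclusion is complete and correct (modulo the harmless slip ``cancel pairwise'': the terms of maximal valuation cancel collectively, which already forces the maximum in $\mathcal{L}(\nu_f)$ to be attained at least twice), and your construction of the initial form $\mathrm{in}_x(f)$ and of the approximate root $z^{(0)}$ with $\Val(z^{(0)})=x$ and $\val\bigl(f(z^{(0)})\bigr)<M$ is exactly how the hard inclusion begins; note also that because the value group here is all of $\R$, every corner point must be lifted exactly, which your scheme aims at.

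The gap is that the lifting step, which is the actual content of Kapranov's theorem, is left as a plan. As stated, a multivariate Newton/Hensel iteration does not go through: to correct $z^{(0)}$ you need some partial derivative of $f$ (or of a suitable restriction of $f$) to be nonzero at the approximate root with controlled valuation, and nothing in your setup guarantees this; without such a hypothesis the error need not decay and the correction can change $\Val(z)$. Your fallback, slicing by a generic monomial curve through $x$ and invoking the univariate Newton--Puiseux/Newton-polygon description of root valuations, is the standard repair, but it also requires an argument: one must choose the one-parameter subgroup generically so that (i) the restricted univariate polynomial is not identically zero and its initial form at the relevant slope still has at least two terms (no cancellation among the $\tilde c_a w^{(0)a}$ after substitution), and (ii) a root of the restricted polynomial with the prescribed valuation of the parameter produces a point of $(\K\setminus 0)^n$ whose full valuation vector is $x$, not merely one coordinate of it. These points are all handled in the published proofs (e.g.\ \cite[Ch.~3]{MS15}), so your outline is salvageable, but in its current form the inclusion $\Sigma\subseteq\VT(f)$ is not yet proved.
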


This theorem gives rise to the following consequence for the polynomial $f$ above. 
\begin{notation}\label{not:restrictions}
For any $c\in \K$, we set $\overline{c}:=\alpha_{-\val(c)}$ (i. e., the the coefficient in $\C$ of the first term following the increasing order of the exponents of $t$ in $c$) if $c\neq 0$, and $\overline{c}:= 0$ otherwise.
We extend this notation to points $z\in \K^n$, by writing $\overline{z}$ in reference to $(\overline{z}_1,\ldots,\overline{z}_n)\in\R^n$. Given a polynomial $f$ with support $A$, we define the restriction $f_\delta$ to any subset $\delta\subset A$ as the polynomial $\sum_{a\in \delta}c_az^a$.
%, where $c_a=0\Leftrightarrow a\not\in A$. 
The notation $\overline{f}$ refers to the restriction $\sum\overline{c}_az^a$, where $a$ runs over all points in $A$ that maximize $\val (c_a)$. For example, if $f = t^5(2 + 3t) -5t^5z_1 + 16 t^7z_2 $, then $\overline{f} = 2 -5z_1$.
\end{notation} 

\begin{corollary} \label{cor:lower-order}
The polynomial $f$ has a solution $\tilde{z}\in(\K\setminus 0)^n$ with $\Val(\tilde{z}) = (0,\ldots,0)$ if and only if $\overline{f}$ has at least two non-zero monomial terms.
\end{corollary}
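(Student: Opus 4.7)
The plan is to derive the equivalence directly from Kapranov's theorem (Theorem~\ref{th:Kapranov}) applied at the origin. Observe first that the existence of a solution $\tilde{z} \in (\K \setminus 0)^n$ of $f$ with $\Val(\tilde{z}) = (0,\ldots,0)$ is equivalent to the statement $(0,\ldots,0) \in \VT(f)$. By Kapranov's theorem, this in turn is equivalent to the origin belonging to the corner locus of $\mathcal{L}(\nu_f)$.

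Next, I would compute the value of $\mathcal{L}(\nu_f)$ at the origin. By the definition in~\S\ref{sub:trop-polynomials_hypersurfaces}, one has
\[
\mathcal{L}(\nu_f)(0,\ldots,0) \;=\; \max_{a \in A}\bigl\{\langle (0,\ldots,0), a\rangle + \nu_f(a)\bigr\} \;=\; \max_{a \in A} \val(c_a).
\]
A point lies in the corner locus of the piecewise-linear convex function $\mathcal{L}(\nu_f)$ if and only if the maximum defining it is attained by at least two distinct affine pieces, i.e., in our case by at least two indices $a \in A$. Hence the origin is in the corner locus exactly when $\val(c_a)$ attains its maximum over $A$ at two or more points.

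Finally, I would translate this condition back to the statement of the corollary using Notation~\ref{not:restrictions}. Since $\overline{f} = \sum \overline{c}_a z^a$ is the restriction to those $a \in A$ which maximize $\val(c_a)$, and each corresponding leading coefficient $\overline{c}_a = \alpha_{-\val(c_a)}$ is nonzero by definition, the number of nonzero monomial terms of $\overline{f}$ equals the number of $a \in A$ at which $\val(c_a)$ attains its maximum. Chaining these three equivalences yields the claim. The argument is short and contains no real obstacle; the only point requiring a small amount of care is noting that each monomial appearing in $\overline{f}$ comes with a nonzero complex coefficient, so that counting monomials of $\overline{f}$ faithfully reflects the cardinality of the set of maximizers of $\val(c_a)$.
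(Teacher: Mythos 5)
Your proposal is correct and follows essentially the same route as the paper: pass from the existence of a root with valuation $(0,\ldots,0)$ to membership of the origin in $\VT(f)$, apply Theorem~\ref{th:Kapranov} to identify this with non-differentiability of $\mathcal{L}(\nu_f)$ at the origin (i.e.\ the maximum being attained by at least two terms), and translate the set of maximizers of $\val(c_a)$ into the non-zero monomials of $\overline{f}$ via Notation~\ref{not:restrictions}. The only difference is that you spell out the evaluation at the origin and the counting of maximizers, which the paper leaves implicit.
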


\begin{proof}
 An above solution $\tilde{z}$ exists if and only if $(0,\ldots,0)$ belongs to $\VT(f)$ (Theorem~\ref{th:Kapranov}). This holds true if and only if $\mathcal{L}(\nu_f)$ attains its maximum at two different terms simultaneously. This is equivalent to $\overline{f}$ having at least two non-zero monomial terms. 
\end{proof}

\subsection{Subdivisions from tropical polynomials}\label{subs:tropical_polynomials} For any two values $a,b\in\mathbb{T}:=\R\cup\{-\infty\}$, their \emph{tropical summation} $a\oplus b$ is defined as their maximum $\max (a,b)$, and their \emph{tropical multiplication} $a \otimes b$ is their usual sum $a+b$. This gives rise to a \emph{tropical semi-field} $(\mathbb{T},\oplus ,\otimes)$, where $\max(a,-\infty) = a$, and $ -\infty + a = -\infty$. A \emph{tropical polynomial} $F$ is defined over the semifield $(\mathbb{T},\oplus ,\otimes)$, which gives rise to a function 
\[
  \begin{array}{lccc}
    F: & \mathbb{T}^n & \longrightarrow & \mathbb{T} \\
    
 	 & x & \longmapsto  &\displaystyle \max_{a\in A}\{\langle x, a\rangle + \gamma_a\}, \\
    
  \end{array}
\] where $A$ is a finite set containing all $a\in\N^n$ for which $\gamma_a\in\R$. The set $A$ is called the \emph{support} of the tropical polynomial $F$, and the linear terms appearing in $F$ are called the \emph{tropical monomials}. 
The \emph{tropicalization} of the polynomial $f$ above is the tropical polynomial 
\[
f^{\trop}(x):=\max_{a\in A}\{\langle x, a\rangle +\val(c_a)\}.
\] This coincides with the piecewise-linear convex function $\mathcal{L}(\nu_f)$ defined above, and thus Theorem~\ref{th:Kapranov} asserts that $\VT(f)$ is the corner locus of $f^{\trop}$. Conversely, the corner locus of any tropical polynomial is a tropical hypersurface.

%For any two values $a,b\in\mathbb{T}:=\R\cup\{-\infty\}$, their \emph{tropical summation} $``a +b" $ is defined as their maximum $\max (a,b)$, and their \emph{tropical multiplication} $``a \times b"$ is their usual sum $a+b$. This gives to a \emph{tropical semi-field} $(\mathbb{T},``\!+\!",``\!\times\!")$, where $\max(a,-\infty) = a$, and $ -\infty + a = -\infty$. A \emph{tropical polynomial} $F$ is defined over the semifield $(\mathbb{T},``\!+\!",``\!\times\!")$, which gives rise to a function 
%\[
%  \begin{array}{lccc}
%    F: & \mathbb{T}^n & \longrightarrow & \mathbb{T} \\
%    
% 	 & x & \longmapsto  &\displaystyle \max_{w\in\mathcal{W}}\{\langle x, w\rangle + C_a\}, \\
%    
%  \end{array}
%\] where $\mathcal{W}$ is a finite set containing all $w\in\N^n$ for which $C_a\in\R$. The set $\mathcal{W}$ is called the \emph{support} of the tropical polynomial $F$, and the linear terms appearing in $F$ are called the \emph{tropical monomials}. 
%The \emph{tropicalization} of the polynomial $f$ above is the tropical polynomial 
%\[
%f^{\trop}(x):=\max_{w\in\mathcal{W}}\{\langle x, w\rangle +\val(c_a)\}.
%\] This coincides with the piecewise-linear convex function $\mathcal{L}(\nu_f)$ defined in Section~\ref{sub:trop-polynomials_hypersurfaces}, and thus Theorem~\ref{th:Kapranov} asserts that $\VT(f)$ is the corner locus of $f^{\trop}$. Conversely, the corner locus of any tropical polynomial is a tropical hypersurface.

\subsubsection{Regular polyhedral subdivisions}\label{subsub:subdivisions}  

All polytopes in this paper are assumed to be convex.
\begin{definition}\label{def:polyh-subdiv}
A \emph{polyhedral subdivision} of a polytope $\Delta\subset\R^n$ is a set of polytopes $\{\Delta_i\}_{i\in I}$ satisfying $\cup_{i\in I}\Delta_i=\Delta$, and if $i,j\in I$, then $\Delta_i\cap\Delta_j$ is either empty or it is a common face of $\Delta_i$ and $\Delta_j$.
\end{definition}

\begin{definition}\label{def:Regular}
Let $\Delta$ be a polytope in $\R^n$ and let $\tau$ denote a polyhedral subdivision of $\Delta$ consisting of polytopes. We say that $\tau$ is \emph{regular} if there exists a continuous, convex, piecewise-linear function $\varphi:\Delta\rightarrow \R$ such that the polytopes of $\tau$ are exactly the domains of linearity of $\varphi$.
\end{definition} 

 Let $\Delta$ be an integer polytope in $\R^n$ and let $\varphi:~\Delta~\cap~\mathbb{Z}^n\rightarrow \R$ be a function. We denote by $\hat{\Delta}(\phi)$ the convex hull of the set 
\[
\{(a,~\varphi(a))\in\R^{n+1}~|~a\in\Delta\cap\mathbb{Z}^n\}.
\] Then the polyhedral subdivision of $\Delta$ induced by projecting the union of the lower faces of $\hat{\Delta}(\varphi)$ onto the first $n$ coordinates, is regular.

%%%%%%%%%%%%%%%%%%%%%%%%%%%%%%%%%%%%%%%%%%%%%%%%%%%%%%%%%%%%%%%%%%%%%%%%%%%%
% DO NOT DELETE
%%%%%%%%%%%%%%%%%%%%%%%%%%%%%%%%%%%%%%%%%%%%%%%%%%%%%%%%%%%%%%%%%%%%%%%%%%%%

%\begin{definition}\label{def:polyh-subdiv}
%A \emph{polyhedral subdivision} of a convex polytope $\Delta\subset\R^n$ is a set of convex polytopes $\{\Delta_i\}_{i\in I}$ such that
%
%\begin{itemize}
%
% \item $\cup_{i\in I}\Delta_i=\Delta$, and
% 
% \item if $i,j\in I$, then $\Delta_i\cap\Delta_j$ is either empty or it is a common face of $\Delta_i$ and $\Delta_j$.
%\end{itemize}
%\end{definition}
%
%
%\begin{definition}\label{def:Regular}
%Let $\Delta$ be a convex polytope in $\R^n$ and let $\tau$ denote a polyhedral subdivision of $\Delta$ consisting of convex polytopes. We say that $\tau$ is \emph{regular} if there exists a continuous, convex, piecewise-linear function $\varphi:\Delta\rightarrow \R$ such that the polytopes of $\tau$ are exactly the domains of linearity of $\varphi$.
%\end{definition} 
%Let $\Delta$ be an integer convex polytope in $\R^n$ and let $\phi:~\Delta~\cap~\mathbb{Z}^n\rightarrow \R$ be a function. We denote by $\hat{\Delta}(\phi)$ the convex hull of the set 
%\[
%\{(w,~\phi(w))\in\R^{n+1}~|~w\in\Delta\cap\mathbb{Z}^n\}.
%\] Then the polyhedral subdivision of $\Delta$ induced by projecting the union of the lower faces of $\hat{\Delta}(\phi)$ onto the first $n$ coordinates, is regular. 

%%%%%%%%%%%%%%%%%%%%%%%%%%%%%%%%%%%%%%%%%%%%%%%%%%%%%%%%%%%%%%%%%%%%%%%%%%%%
%%%%%%%%%%%%%%%%%%%%%%%%%%%%%%%%%%%%%%%%%%%%%%%%%%%%%%%%%%%%%%%%%%%%%%%%%%%%

\subsubsection{Subdivisions and their duals}\label{subsub:subdivisions2}
Keeping with the same notation as in \S\ref{sub:trop-polynomials_hypersurfaces}, the tropical hypersurface $\VT(f)$ is an $(n-1)$-dimensional piecewise-linear complex which produces a \emph{polyhedral subdivision} $\Xi$ of $\R^n$. 
This is a finite collection of the relative interiors of polyhedra in $\R^n$, whose closures satisfy Definition~\ref{def:polyh-subdiv}.
Each element of $\Xi$ is called a \emph{cell}. The $n$-dimensional cells of $\Xi$, are the connected components of the complement of $\VT(f)$ in $\R^n$. 
All together, cells of dimension less than $n$ form the domains of linearity of $f^{\trop}$ at $\VT(f)$. 

The subdivision $\Xi$ induces a regular subdivision $\tau$ of the Newton polytope $\cN(f)$ of $f$ in the following way (see also~\cite[Section 3]{BB13}). Given a cell $\xi$ of $\VT(f)$ and a point $x$ in $\xi$, the set 
\[
\mathcal{I}_{\xi}:=\{a\in A~|~f^{\trop}(x)=\langle x,a\rangle + \val(c_a)\}
\] does not depend on $x$. All together the polyhedra $\delta(\xi)$, defined as the convex hull of $\mathcal{I}_{\xi}$ form a subdivision $\tau$ of $\cN(f)$ called the \emph{dual subdivision}, and the polyhedron $\delta(\xi)$ is called the \emph{dual} of $\xi$.  
An analogous description holds true if we consider two polynomials (c.f. Figure~\ref{fig:subdivisions})
 $f_1,f_2\in\mathbb{K}[z_1,z_2]$. 
The \emph{Minkowski sum} of any two subsets $A,B\subset\R^n$ is the coordinate-wise sum
\[
A+B:=\{a+b~|~a\in A,~b\in B\}.
\] Let $A_1,A_2\subset\mathbb{Z}^2$, $\Delta_1,\Delta_2\subset\R^2$, and $T_1,T_2\subset\R^2$ be their respective supports, Newton polytopes, and tropical curves, respectively. The union of $T_1\cup T_2$ defines a polyhedral subdivision $\Xi$ of $\R^2$. Any non-empty cell of $\Xi$ can be written as 
\[
\xi=\xi_1\cap\xi_2
\] with $\xi_i\in\Xi_i$, where $\Xi_i$ is the polyhedral subdivision of $\R^2$ produced by $T_i$ ($i=1,2$). Any cell $\xi\in\Xi$ can be uniquely written in this way. Similarly, the polyhedral subdivision induces a \emph{mixed dual subdivision} $\tau$  of the Minkowski sum $\Delta=\Delta_1 +\Delta_2$ in the following way. Any polytope $\delta\in\tau$ is equipped with a unique representation $\delta=\delta_1+\delta_2$ with $\delta_i\in\tau_i$, where each $\tau_i$ is the dual subdivision of $\Delta_i$ ($i=1,2$). The above duality-correspondence applied to the (tropical) product of the tropical polynomials gives rise to the following well-known fact (see e.g.,~\cite[\S3~\&~4]{BB13}).

\begin{proposition}\label{prop:mixed_subd}
There is a one-to-one duality correspondence between $\Xi$ and $\tau$, which reverses the inclusion relations, and such that if $\delta\in\tau$ corresponds to $\xi\in\Xi$, then

\begin{enumerate}

 \item $\xi=\xi_1\cap\xi_2$ with $\xi_i\in\Xi_i$ for $i=1,2$, then  $\delta = \delta_1 + \delta_2$ with $\delta_i\in\tau_i$ is the polytope dual to $\xi_i$.
 
 \item $\dim\xi + \dim\delta = 2$,
 
 \item the cell $\xi$ and the polytope $\delta$ span orthonogonal real affine spaces in $\R^2$,
 
 \item the cell $\xi$ is unbounded if and only if $\delta$ lies on a proper face of $\Delta$.

\end{enumerate}
\end{proposition}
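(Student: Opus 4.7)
My plan is to reduce the statement to the single-polynomial duality recalled earlier in \S\ref{subsub:subdivisions2} by introducing the \emph{tropical product} $F := f_1^{\trop} \otimes f_2^{\trop}$, which as a function $\R^2 \to \R$ is just the pointwise sum $f_1^{\trop} + f_2^{\trop}$. Expanding this tropical product yields a tropical polynomial whose support lies in the Minkowski sum $A_1 + A_2$ and whose Newton polytope is exactly $\Delta = \Delta_1 + \Delta_2$. Since the corner locus of a sum of convex piecewise-linear functions is the union of the corner loci, the tropical hypersurface $\VT(F)$ coincides with $T_1\cup T_2$, and hence the polyhedral subdivision $\Xi$ of $\R^2$ is exactly the one induced by $\VT(F)$. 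Applying the single-polynomial duality to $F$ then supplies a regular subdivision of $\Delta$, together with an inclusion-reversing bijection with $\Xi$ satisfying properties (2), (3), and (4) automatically (the standard orthogonality and dimension statements for a regular polyhedral subdivision dual to a tropical hypersurface).

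The remaining content is to identify this subdivision with the mixed subdivision $\tau$ and to verify the Minkowski decomposition of statement (1). For a cell $\xi \in \Xi$ with relative interior point $x$, the index set $\mathcal{I}_\xi$ consists of those $a \in A_1 + A_2$ at which $F$ attains its maximum at $x$. Because the summands of $F$ are each maximized independently, the monomial $\langle x, a_1 + a_2\rangle + \val(c_{a_1}) + \val(c_{a_2})$ reaches the maximum if and only if $a_i \in \mathcal{I}_{\xi_i}$ for $i=1,2$, where $\xi = \xi_1 \cap \xi_2$ is the unique decomposition with $\xi_i \in \Xi_i$. Hence $\mathcal{I}_\xi = \mathcal{I}_{\xi_1} + \mathcal{I}_{\xi_2}$, and taking convex hulls gives $\delta(\xi) = \delta(\xi_1) + \delta(\xi_2)$, which establishes (1) and simultaneously identifies the subdivision with $\tau$.

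For (3), once the identification $\delta = \delta_1 + \delta_2$ is in place, orthogonality follows from the fact that $F$ is affine on $\xi$ with gradient equal to any $a \in \mathcal{I}_\xi$: for any two $a, a' \in \mathcal{I}_\xi$ and any vector $v$ parallel to $\xi$, we must have $\langle v, a - a'\rangle = 0$, and since the affine span of $\delta$ is the translate of $\mathrm{span}\{a - a' : a, a' \in \mathcal{I}_\xi\}$, the two affine spaces are orthogonal. The dimension statement (2) is then forced since full orthogonality in $\R^2$ demands $\dim \xi + \dim \delta = 2$. For (4), a cell $\xi$ is unbounded exactly when there exists a direction $v \neq 0$ with $\xi + \R_{\ge 0}\, v \subset \xi$; by the same linearity argument, this is equivalent to $v$ being perpendicular to every element of $\mathcal{I}_\xi$ while supporting a nontrivial face of $\Delta$, i.e., to $\delta$ lying in a proper face of $\Delta$.

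The main obstacle is not conceptual — the underlying regular-subdivision duality is standard — but organisational: one has to carefully track that the Minkowski decomposition of cells of $\tau$ matches the intersection decomposition $\xi = \xi_1 \cap \xi_2$ coming from the two tropical curves, and that taking the tropical product does not collapse or split any cells relative to $T_1 \cup T_2$. Once the support-level identity $\mathcal{I}_\xi = \mathcal{I}_{\xi_1} + \mathcal{I}_{\xi_2}$ is in hand, everything else is a formal consequence of Kapranov's theorem applied to $F$ and the convex analysis of its Legendre transform.
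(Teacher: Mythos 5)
Your proposal is correct and takes essentially the same route the paper indicates: the paper gives no proof of this proposition, noting only that it is the single-polynomial duality of \S\ref{subsub:subdivisions2} applied to the tropical product of the two tropical polynomials (citing \cite{BB13}), which is precisely your reduction, and your identity $\mathcal{I}_{\xi}=\mathcal{I}_{\xi_1}+\mathcal{I}_{\xi_2}$ is the right key step for the Minkowski decomposition in (1). Your one-line justifications of (2) and (4) are slightly loose (orthogonality alone only yields $\dim\xi+\dim\delta\le 2$, and unboundedness of $\xi$ in a direction $v$ corresponds to the linear form $\langle v,\cdot\rangle$ being maximized over $\Delta$ on a face containing $\delta$, not to $v$ being perpendicular to the elements of $\mathcal{I}_{\xi}$), but these are exactly the standard duality facts the paper itself takes as known.
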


\begin{figure}[h]
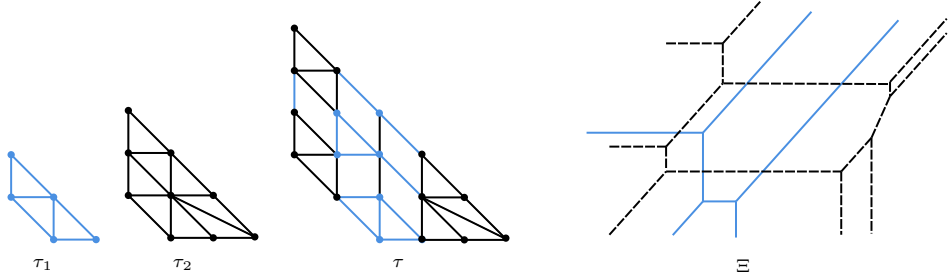

\center

\tikzset{every picture/.style={line width=0.75pt}} %set default line width to 0.75pt        

% [inline block 0: 1 envs, 29415 chars -> data_tex | \begin{tikzpicture}[x=1pt,y=1pt,yscale=-1,xscale=1] %uncomment if require: \path (0,292); %set diagram left start at 0, ...]

\caption{ The dual subdivisions $\tau_1$, $\tau_2$, and $\tau$ of $\cN(f_1)$, $\cN(f_2)$, and $\cN(f_1)+\cN(f_2)$, respectively, together with the subdivision $\Xi$ of $\R^2$ induced by the tropical curves $T_1$ and $T_2$. Here, $f_1$ and $f_2$ are the polynomials from~\eqref{eq:main_example}.}\label{fig:subdivisions}
\end{figure}

\begin{definition}[stable cells]\label{def:Transv} The cell $\xi\in\Xi$ is \emph{stable} if the equality $\dim(\delta)=\dim(\delta_1 )+\dim(\delta_2)$ holds, and for $i=1,2$, the point 
$(a,~-\nu_{f_i}(a))$ is either a vertex of the convex hull of 
\[
\{(a,~-\nu_{f_i}(a))\in\N^{2}\times \R~|~a\in\delta_i\},
\] or does not belong to it. We say that $\Xi$ is \emph{stable} if so are all of its cells. A point $x\in\R^2$ is a \emph{stable intersection point} of $T_1$ and $T_2$ if $x\in T_1\cap T_2$, and $x$ is a stable cell of $\Xi$. We say that the intersection $T_1\cap T_2$ is \emph{stable} if it consists of stable intersection points.
\end{definition}

\begin{notation}
For any $\xi\in\Xi$ as in Proposition~\ref{prop:mixed_subd}, we will use $\delta(\xi)$ to denote the polytope $\delta$, dual to $\xi$ and we use $\delta(\xi_i)$ ($i=1,2$) to denote the polytope $\delta_i$, dual to $\xi_i$.
\end{notation}

\section{Tropical polynomial maps on the plane}\label{sec:class-trop-maps} 
%In this section, we will establish the necessary notations and definitions around tropical polynomial maps. This includes the explicit description of the set $\Omega$ and the map $\Phi_F$ from Theorem~\ref{thm:main}.  We will furthermore introduce the main two technical results Propositions~\ref{prop:t-crit_mixed-cells}, and~\ref{prop:super-critical-mixed-cells} that will be used for the proof of Theorem~\ref{thm:main} in~\S\ref{sec:proof-main-th}.

Let $A_1$ and $A_2$ be two finite subsets in $\N^2\setminus\{(0,0)\}$, and consider two polynomials $f_1,f_2\in\K[z_1,z_2]$, where $A_i$ is the support of $f_i$ ($i=1,2$). We say that the map $f:=(f_1,f_2):\TK\to\TK$ is \emph{dominant} if there exists a point $\tilde{z}\in \TK$, where the Jacobian matrix $\Jac_{\tilde{z}} f$, evaluated at $\tilde{z}$, is non-singular. In other words, the determinant of $\Jac_{z} f$ is a non-zero polynomial in $\K[z_1,z_2]$. In the rest of this section, we assume that $f$ is a dominant polynomial map. 

\begin{notation}\label{not:maps}
We use $A$ to denote the pair $(A_1,A_2)$ of supports of $f$, and $\K[A]$ to denote the space $\K[A_1,A_2]$. The map $f$ gives rise to a tropical polynomial map $F:=(F_1,F_2):\R^2\to\R^2$, where $F_1:=f^{\trop}_1$ and $F_2:=f^{\trop}_2$. Recall that $F_1$, and $F_2$ give rise to three polyhedral subdivisions of $\R^2$, denoted by $\Xi_1$, $\Xi_2$, and $\Xi$ (see \S\ref{subsub:subdivisions2}): $\Xi_1$, $\Xi_2$ are induced by the tropical curves $T_1:=\VT(f_1)$, $T_2:=\VT(f_2)$, and $\Xi$ is induced by $T_1\cup T_2$. We use $\tau_1$, $\tau_2$, and $\tau$ denote the subdivisions of $\cN(f_1)$, $\cN(f_2)$, and $\cN(f_1)+\cN(f_2)$, dual to $\Xi_1$, $\Xi_2$, and $\Xi$, respectively (see e.g. Figure~\ref{fig:subdivisions}). 
\end{notation}

In \S\ref{subsub:cell-types}, we classify the cells appearing in $\Xi$. The distinction on the type of each cell $\xi$, is made using the sizes and the mutual disposition of the two polyhedra $\delta(\xi_1)$ and $\delta(\xi_2)$ (see Definition~\ref{def:lat-diag}). This classification is necessary to introduce the function $\Phi_F$ in Definition~\ref{def:main}. 

The set $\Omega$ from Theorem~\ref{thm:main} is the intersection of three Zariski open subsets in $\K[A]$. The first one, introduced in \S\ref{sub:valuation-space}, encodes all maps inducing a stable subdivision $\Xi$ where $\tau_i$ ($i=1,2$) is a triangulation. The other two subsets of $\K[A]$ are introduced in \S\ref{sub:tropical_critical}; one is the largest subset containing all maps $f$, whose tropicalization of their critical locus depends only on $F$ (see Lemma~\ref{lem:crit-points-invar}), and the third one (introduced in Proposition~\ref{prop:t-crit_mixed-cells}) describes further conditions on the tropical curve of these critical loci. 

We conclude this section by introducing in \S\ref{subs:images_tropical} two technical results (Lemma~\ref{lem:val-F_F-val} and Proposition~\ref{prop:super-critical-mixed-cells}) on the tropical curve of the discriminant of $f$.  Lemma~\ref{lem:val-F_F-val}, and Propositions~\ref{prop:t-crit_mixed-cells} and~\ref{prop:super-critical-mixed-cells} are crucial for the proof of Theorem~\ref{thm:main} in \S\ref{sec:proof-main-th}. Their proofs are postponed to \S\ref{sec:tropical_critical}, \S\ref{sec:supercr} and \S\ref{sec:appendix_some_proofs}.

\subsection{Types of mixed cells and the tropical discriminant function}\label{subsub:cell-types} 
A cell of dimension $k$ is called a \emph{$k$-cell}. Recall that, for any cell $\xi$ in $\Xi$, there is a unique choice $(\xi_1,\xi_2)\in \Xi_1\times\Xi_2$, such that $\xi = \xi_1\cap\xi_2$. Let $\xi$ be a $2$-cell. Then each of $\xi_1$, and $\xi_2$ are $2$-cells as well, and thus the couple of their dual polytopes $(\delta(\xi_1),\delta(\xi_2))\in\tau_1\times\tau_2$ is a couple of points in $A_1\times A_2$.
\begin{definition}[Relevant, diagonal, and lateral cells]\label{def:lat-diag}
A $2$-cell $\xi\in\Xi$ is called \emph{relevant} if the set\\ $\{(0,0),~\delta(\xi_1),~\delta(\xi_2)\}$ belongs to a line $L$, and is called \emph{irrelevant} otherwise. We say that $\xi$ is \emph{lateral}  if it is relevant, and $L$ separates $\R^2$ into two components, where the closure of one of them contains $A_1\cup A_2$. A cell in $\Xi$ is called \emph{diagonal} if it is not lateral.
\end{definition}

Recall that any $\xi\in\Xi$ is the relative interior of a polytope in $\R^2$. In what follows, we use $\overline{\xi}$ to denote the Euclidean closure of $\xi$ in $\R^2$. Two adjacent cells $\xi,\xi'\in\Xi$ are said to be \emph{directly adjacent} if $\dim \overline{\xi}\cap \overline{\xi'} =1$.

\begin{remark}\label{rem:diagonal-lateral}
Unlike lateral cells in $\Xi$, diagonal cells can be relevant or irrelevant. Moreover, if a $1$-cell $\xi$ is directly adjacent to two lateral cells, then $\xi$ is a half-line.
\end{remark}

\begin{example}
To distinguish between relevant and irrelevant cells of the maps $f$ in Equation~\eqref{eq:main_example}, it is enough to consider the overlapping equations of the $2$-cells outside the union of two curves $T_1\cup T_2$ (see Figure~\ref{fig:two_tropica_curves}). The relevant and irrelevant cells are represented in Figure~\ref{fig:critical_set-disriminant} on the left.  
In this example, the only diagonal relevant cell is bounded and is yellow. All other yellow cells are lateral.
\end{example}

%Let $\xi $ be a lateral $2$-cell of $\Xi$. Since the map $f$ is dominant, one of the supports, say, $A_1$, does not belong to the line $L$ containing the points $\{(0,0),~\delta(\xi_1),~\delta(\xi_2)\}$. Indeed, otherwise it holds $A_1\cup A_2\subset L$, resulting in a quasi-projective polynomial system whose Jacobian matrix is always zero. 
%%which results in $\det(\Jac_{z}f)\equiv 0 $. 
%Let $v$ be a vector in $\Z^2\setminus L$, and let $L_\lambda$ denote the translated line $\lambda\cdot v + L$ for some $\lambda\in\R\setminus 0$. The \emph{first $\xi$-slice of $A_1$} is the set of points 
%\[
%L_{\lambda_0}\cap A_1,
%\] where $|\lambda_0|$ is the smallest absolute value yielding a non-empty above intersection. The same description applies to $A_2$. We refer to Figure~\ref{fig:essentiality_132} for an example of $\xi$-slices and for the following notion.
\begin{definition}[Cells essential to others]\label{def:essential}
Let $\xi$ be a lateral cell, let $\gamma$ be a $1$-cell directly adjacent to it, let $v_{\gamma}\in\Z^2$ be any primitive integer vector directing the segment $\delta(\gamma)$ and let $v_{\xi}\in\Z^2$ be a primitive integer with the same direction as $\delta(\xi)$. We say that $\gamma$ is \emph{essential to $\xi$} if $|\det (v_\gamma,v_{\xi})|\geq 2$.
%Let $\xi$ be a lateral cell, let $\xi'$ be an irrelevant $2$-cell directly adjacent to it, let $\gamma$ be a $1$-cell such that $\gamma\subset\overline{\xi}\cap\overline{\xi'}$, and $v_{\gamma},v_{\gamma^*}\in\Z^2$ be any two primitive integer vectors directing $\gamma$ and $\delta(\gamma)$ respectively. We say that $\gamma$ is \emph{essential to $\xi$} if $|\det (v_{\gamma},v_{\gamma^*})|\neq 1$.
% the point $\delta(\xi_i)$ does not belong to the first $\xi$-slice of $A_i$.
%, and let $i\in\{1,2\}$ be such that $T_i$ is the tropical curve containing $\gamma$. 
%We say that $\gamma$ is \emph{essential to $\xi$} if the point $\delta(\xi_i)$ does not belong to the first $\xi$-slice of $A_i$.
\end{definition}

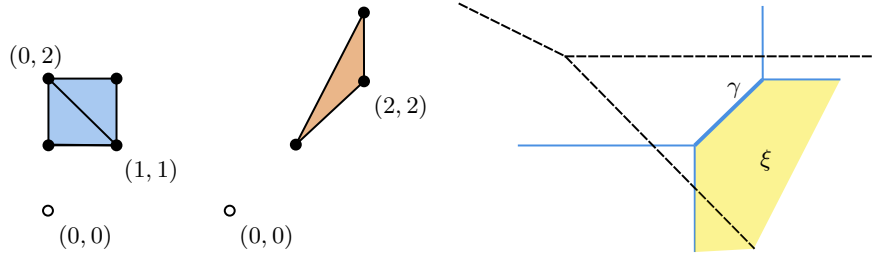
\begin{figure}

\tikzset{every picture/.style={line width=0.75pt}} %set default line width to 0.75pt        

\begin{tikzpicture}[x=1.25pt,y=1.25pt,yscale=-1,xscale=1]
%uncomment if require: \path (0,292); %set diagram left start at 0, and has height of 292

%Shape: Polygon [id:ds9102714208584287] 
\draw  [color={rgb, 255:red, 0; green, 0; blue, 0 }  ,draw opacity=0 ][fill={rgb, 255:red, 248; green, 231; blue, 28 }  ,fill opacity=0.48 ] (311.65,110.55) -- (335.16,110.55) -- (308.91,161.84) -- (291.31,143.84) -- (291.05,130.55) -- cycle ;
%Shape: Polygon [id:ds004068329568172269] 
\draw  [color={rgb, 255:red, 0; green, 0; blue, 0 }  ,draw opacity=1 ][fill={rgb, 255:red, 74; green, 144; blue, 226 }  ,fill opacity=0.48 ][line width=0.75]  (116.1,130.5) -- (116.1,110.29) -- (95.5,110.29) -- (95.5,130.29) -- cycle ;
%Straight Lines [id:da15151021978557133] 
%\draw [color={rgb, 255:red, 139; green, 87; blue, 42 }  ,draw opacity=0.7 ]   (85.06,141.09) -- (132.67,94) ;
%Shape: Circle [id:dp02171733056218217] 
\draw  [fill={rgb, 255:red, 0; green, 0; blue, 0 }  ,fill opacity=1 ] (94,130.5) .. controls (94,129.67) and (94.67,129) .. (95.5,129) .. controls (96.33,129) and (97,129.67) .. (97,130.5) .. controls (97,131.33) and (96.33,132) .. (95.5,132) .. controls (94.67,132) and (94,131.33) .. (94,130.5) -- cycle ;
%Shape: Circle [id:dp8473227320375215] 
\draw  [fill={rgb, 255:red, 0; green, 0; blue, 0 }  ,fill opacity=1 ] (114.6,130.5) .. controls (114.6,129.67) and (115.27,129) .. (116.1,129) .. controls (116.93,129) and (117.6,129.67) .. (117.6,130.5) .. controls (117.6,131.33) and (116.93,132) .. (116.1,132) .. controls (115.27,132) and (114.6,131.33) .. (114.6,130.5) -- cycle ;
%Shape: Circle [id:dp8834439454948948] 
\draw  [fill={rgb, 255:red, 0; green, 0; blue, 0 }  ,fill opacity=1 ] (94,110.29) .. controls (94,109.46) and (94.67,108.79) .. (95.5,108.79) .. controls (96.33,108.79) and (97,109.46) .. (97,110.29) .. controls (97,111.12) and (96.33,111.79) .. (95.5,111.79) .. controls (94.67,111.79) and (94,111.12) .. (94,110.29) -- cycle ;
%Shape: Circle [id:dp8158414932238478] 
\draw  [fill={rgb, 255:red, 0; green, 0; blue, 0 }  ,fill opacity=1 ] (114.6,110.29) .. controls (114.6,109.46) and (115.27,108.79) .. (116.1,108.79) .. controls (116.93,108.79) and (117.6,109.46) .. (117.6,110.29) .. controls (117.6,111.12) and (116.93,111.79) .. (116.1,111.79) .. controls (115.27,111.79) and (114.6,111.12) .. (114.6,110.29) -- cycle ;
%Straight Lines [id:da5384876404589904] 
\draw    (95.5,130.5) -- (116.1,130.5) ;
%Straight Lines [id:da28818663237326814] 
\draw    (95.5,110.29) -- (116.1,130.5) ;
%Shape: Polygon [id:ds45837567046003047] 
\draw  [color={rgb, 255:red, 0; green, 0; blue, 0 }  ,draw opacity=1 ][fill={rgb, 255:red, 212; green, 103; blue, 9 }  ,fill opacity=0.48 ][line width=0.75]  (191,111.12) -- (190.96,90.29) -- (170.36,130.29) -- cycle ;
%Shape: Circle [id:dp33442884682678364] 
\draw  [fill={rgb, 255:red, 0; green, 0; blue, 0 }  ,fill opacity=1 ] (189.5,111.12) .. controls (189.5,110.29) and (190.17,109.62) .. (191,109.62) .. controls (191.83,109.62) and (192.5,110.29) .. (192.5,111.12) .. controls (192.5,111.95) and (191.83,112.62) .. (191,112.62) .. controls (190.17,112.62) and (189.5,111.95) .. (189.5,111.12) -- cycle ;
%Shape: Circle [id:dp589157610376136] 
\draw  [fill={rgb, 255:red, 0; green, 0; blue, 0 }  ,fill opacity=1 ] (189.46,90.29) .. controls (189.46,89.46) and (190.13,88.79) .. (190.96,88.79) .. controls (191.79,88.79) and (192.46,89.46) .. (192.46,90.29) .. controls (192.46,91.12) and (191.79,91.79) .. (190.96,91.79) .. controls (190.13,91.79) and (189.46,91.12) .. (189.46,90.29) -- cycle ;
%Shape: Circle [id:dp463477300753097] 
\draw  [fill={rgb, 255:red, 0; green, 0; blue, 0 }  ,fill opacity=1 ] (168.86,130.29) .. controls (168.86,129.46) and (169.53,128.79) .. (170.36,128.79) .. controls (171.19,128.79) and (171.86,129.46) .. (171.86,130.29) .. controls (171.86,131.12) and (171.19,131.79) .. (170.36,131.79) .. controls (169.53,131.79) and (168.86,131.12) .. (168.86,130.29) -- cycle ;
%Straight Lines [id:da6565279609215747] 
\draw [color={rgb, 255:red, 74; green, 144; blue, 226 }  ,draw opacity=1 ]   (311.6,88.29) -- (311.65,110.55) ;
%Straight Lines [id:da268739639202641] 
\draw [color={rgb, 255:red, 74; green, 144; blue, 226 }  ,draw opacity=1 ]   (311.65,110.55) -- (335.16,110.55) ;
%Straight Lines [id:da872930022068876] 
\draw [color={rgb, 255:red, 74; green, 144; blue, 226 }  ,draw opacity=1 ][line width=1.5]    (291.05,130.55) -- (311.65,110.55) ;
%Straight Lines [id:da8775834023433112] 
\draw [color={rgb, 255:red, 74; green, 144; blue, 226 }  ,draw opacity=1 ]   (291.05,130.55) -- (291.1,162.82) ;
%Straight Lines [id:da9011490163815802] 
\draw [color={rgb, 255:red, 74; green, 144; blue, 226 }  ,draw opacity=1 ]   (237.54,130.55) -- (291.05,130.55) ;
%Straight Lines [id:da554723533420149] 
\draw [color={rgb, 255:red, 0; green, 0; blue, 0 }  ,draw opacity=1 ] [dash pattern={on 3.75pt off 0.75pt}]  (251.65,103.55) -- (345.16,103.55) ;
%Straight Lines [id:da2184827560264564] 
\draw [color={rgb, 255:red, 0; green, 0; blue, 0 }  ,draw opacity=1 ] [dash pattern={on 3.75pt off 0.75pt}]  (219.64,87.52) -- (251.9,103.55) ;
%Shape: Circle [id:dp6076055534235353] 
\draw  [fill={rgb, 255:red, 255; green, 255; blue, 255 }  ,fill opacity=1 ] (148.86,150.29) .. controls (148.86,149.46) and (149.53,148.79) .. (150.36,148.79) .. controls (151.19,148.79) and (151.86,149.46) .. (151.86,150.29) .. controls (151.86,151.12) and (151.19,151.79) .. (150.36,151.79) .. controls (149.53,151.79) and (148.86,151.12) .. (148.86,150.29) -- cycle ;
%Shape: Circle [id:dp27712070903098196] 
\draw  [fill={rgb, 255:red, 255; green, 255; blue, 255 }  ,fill opacity=1 ] (93.86,150.29) .. controls (93.86,149.46) and (94.53,148.79) .. (95.36,148.79) .. controls (96.19,148.79) and (96.86,149.46) .. (96.86,150.29) .. controls (96.86,151.12) and (96.19,151.79) .. (95.36,151.79) .. controls (94.53,151.79) and (93.86,151.12) .. (93.86,150.29) -- cycle ;
%Straight Lines [id:da7630338515780412] 
\draw [color={rgb, 255:red, 0; green, 0; blue, 0 }  ,draw opacity=1 ] [dash pattern={on 3.75pt off 0.75pt}]  (251.9,103.55) -- (309.16,161.84) ;
%Shape: Polygon [id:ds05290312433128852] 
\draw  [color={rgb, 255:red, 0; green, 0; blue, 0 }  ,draw opacity=0 ][fill={rgb, 255:red, 248; green, 231; blue, 28 }  ,fill opacity=0.48 ] (308.91,161.84) -- (291.1,162.82) -- (291.31,143.84) -- cycle ;
%Shape: Boxed Line [id:dp44765914405147367] 

% RED LINES
%\draw [color={rgb, 255:red, 245; green, 82; blue, 21 }  ,draw opacity=1 ]   (222.79,88.38) -- (251.54,102.84) ;
%Straight Lines [id:da7565571000001982] 
%\draw [color={rgb, 255:red, 245; green, 82; blue, 21 }  ,draw opacity=1 ]   (251.29,102.84) -- (340.95,102.9) ;
%%Straight Lines [id:da7781059966608719] 
%\draw [color={rgb, 255:red, 245; green, 82; blue, 21 }  ,draw opacity=1 ]   (251.54,102.84) -- (308.8,161.13) ;
%%Straight Lines [id:da8021428988612846] 
%\draw [color={rgb, 255:red, 245; green, 82; blue, 21 }  ,draw opacity=1 ]   (237.18,129.85) -- (311.8,129.86) ;
%%Straight Lines [id:da7611755174685797] 
%\draw [color={rgb, 255:red, 245; green, 82; blue, 21 }  ,draw opacity=1 ]   (311.8,129.86) -- (311.24,86.58) ;
%%Straight Lines [id:da8288102179553143] 
%\draw [color={rgb, 255:red, 245; green, 82; blue, 21 }  ,draw opacity=1 ]   (336.38,152.24) -- (311.8,129.86) ;

% Text Node
\draw (193,114.52) node [anchor=north west][inner sep=0.75pt]  [font=\small]  {$(2,2)$};
% Text Node
\draw (117.43,133.9) node [anchor=north west][inner sep=0.75pt]  [font=\small]  {$(1,1)$};
% Text Node
\draw (82.54,99.18) node [anchor=north west][inner sep=0.75pt]  [font=\small]  {$(0,2)$};
% Text Node
\draw (97.5,153.69) node [anchor=north west][inner sep=0.75pt]  [font=\small]  {$(0,0)$};
% Text Node
\draw (152.36,153.69) node [anchor=north west][inner sep=0.75pt]  [font=\small]  {$(0,0)$};
% Text Node
\draw (310,130.79) node [anchor=north west][inner sep=0.75pt]  [font=\small]  {$\xi$};
% Text Node
%\draw (285,110.52) node [anchor=north west][inner sep=0.75pt]  [font=\small]  {$\xi'$};
% Text Node
\draw (300,110.52) node [anchor=north west][inner sep=0.75pt]  [font=\small]  {$\gamma$};

\end{tikzpicture}
\caption{Pair of subdivided Newton polytopes of a polynomial map, together with the corresponding tropical curves. The cell $\xi$ is lateral and its dual polytopes satisfy $\delta_1(\xi) = (1,1)$, and $\delta_2(\xi)=(2,2)$. 
%The cell $\xi'$ is directly adjacent to $\xi$, it is irrelevant, and satisfies $\delta_1(\xi') = (0,2)$, and $\delta_2(\xi')=(2,2)$. 
%Since the point $(0,2)$ is \emph{not} in the first $\xi$-slice of $A_1$, 
The $1$-cell $\gamma$ is essential to $\xi$.}\label{fig:essentiality_132}
\end{figure}

Now, we are ready to give a more explicit definition for the function $\Phi_F$ described in the introduction.

\begin{notation}\label{not:arrows-numbers}
Given $p\in\R^2$, the set $\Raw(p)\subset\R^2$ denotes the half line, emanating from $p$, and whose direction is $(-1,0)$, and $\Ras(p)$ will denote the half line, emanating from $p$, and whose directions is $(0,-1)$, respectively. We define $\Rasw(p):=\Ras(p)\cup\Raw(p)$.
\end{notation}

\begin{definition}[Tropical discriminant function]\label{def:main} Given a tropical polynomial map $F$ as above such that all vertices of $T_1$ and of $T_2$ are trivalent, and the subdivision $\Xi$ of $\R^2$, induced by $F$ is stable, we define
\[
\Phi_F:~\Xi\to\pow(\R^2),
\] sending a cell $\xi\in\Xi$ onto a subset in $\R^2$ according to the following situations (see Figure~\ref{fig:situations-table}):
\begin{enumerate}[topsep=0mm,itemsep=0mm,label=\textbf{\arabic*.},ref=\arabic*]
\setcounter{enumi}{-1}
	\item\label{it:0}  Assume that $\dim \xi = 0$.
	
	\begin{enumerate}[topsep=0mm,itemsep=0mm,label=\textbf{0.\arabic*.},ref=\textbf{0.\arabic*}]

		\item\label{it:01} $\xi=E_1\cap E_2$, where $E_i$ is an edge of $T_i$ ($i=1,2$).\\
		
			\begin{enumerate}[topsep=0mm,itemsep=0mm,label=\textbf{0.1.\arabic*.},ref=\textbf{0.1.\arabic*}]
			
				\item\label{it:011} 
				If there are two relevant cells positioned on 
				the same side of $E_1$ (resp. $E_2$) (e.g. $\color{royalblue}\bm e.\text{\textbf{ii}}$ or $\color{royalblue}\bm e.\text{\textbf{iii}}$), we set
				\[
				\Phi_F(\xi) := \Raw\big(F(\xi) \big) \text{ (resp. } \Phi_F(\xi) := \Ras\big(F(\xi) \big)\text{)}
				\]				
				\item\label{it:012} Otherwise (e.g. $\color{royalblue}\bm a.\text{\textbf{i}}$, $\color{royalblue}\bm b.\text{\textbf{i}}$ or $\color{royalblue}\bm c.\text{\textbf{i}}$), we set				
				\[
				\Phi_F(\xi) := \Rasw\big(F(\xi) \big).
				\]				
			\end{enumerate}					
		
		\item\label{it:02} $\xi$ is a vertex of $T_1$ (resp. of $T_2$). \\
		
		 \begin{enumerate}[topsep=0mm,itemsep=0mm,label=\textbf{0.2.\arabic*.},ref=\textbf{0.2.\arabic*}]
			
				\item\label{it:021} If $\xi$ is adjacent to at most one relevant cell (e.g. $\color{royalblue}\bm c.\text{\textbf{ii}}$, $\color{royalblue}\bm d.\text{\textbf{ii}}$, $\color{royalblue}\bm c.\text{\textbf{iii}}$ or $\color{royalblue}\bm d.\text{\textbf{iii}}$), we set
				\[
				\Phi_F(\xi) := \Raw\big(F(\xi) \big) \text{ (resp. } \Phi_F(\xi) := \Ras\big(F(\xi) \big)\text{)}
				\]						
				\item\label{it:022} Otherwise (e.g. $\color{royalblue}\bm d.\text{\textbf{iv}}$ or $\color{royalblue}\bm e.\text{\textbf{iv}}$), we set
				\[
				\Phi_F(\xi) = \emptyset
				\]								
			\end{enumerate}
	
	\end{enumerate}

\item\label{it:1} Assume that $\dim\xi = 1$. Then, it is directly adjacent to two $2$-cells $\sigma,\sigma'\in
\Xi$, and $\xi$ is contained in an edge $E_1$ of $T_1$ (resp. $E_2$ of $T_2$).\\ 	
	\begin{enumerate}[topsep=0mm,itemsep=0mm,label=\textbf{1.\arabic*.},ref=\textbf{1.\arabic*}]

		\item\label{it:11} If both $\sigma$ and $\sigma'$ are \emph{irr}elevant (e.g. $\color{royalblue}\bm d.\text{\textbf{v}}$ or $\color{royalblue}\bm e.\text{\textbf{v}}$), we set  		
				\[
				\Phi_F(\xi) := F(\xi)
				\]				
		\item\label{it:12} If $\sigma$ is relevant, and $\sigma'$ is irrelevant (e.g. $\color{royalblue}\bm b.\text{\textbf{iv}}$ $\color{royalblue}\bm c.\text{\textbf{iv}}$), we set  
				\[
				\Phi_F(\xi) := \emptyset
				\]				
		\item\label{it:13} Assume that both $\sigma$ and $\sigma'$ are relevant. \\
		
			\begin{enumerate}[topsep=0mm,itemsep=0mm,label=\textbf{1.3.\arabic*.},ref=\textbf{1.3.\arabic*}]
			
				\item\label{it:131} If, both $\sigma$ and $\sigma'$ are diagonal (e.g. $\color{royalblue}\bm a.\text{\textbf{ii}}$, $\color{royalblue}\bm b.\text{\textbf{ii}}$, $\color{royalblue}\bm a.\text{\textbf{iii}}$ or $\color{royalblue}\bm b.\text{\textbf{iii}}$), we set
				\[
				\Phi_F(\xi) := \Raw\big(F(\xi) \big) \text{ (resp. } \Phi_F(\xi) := \Ras\big(F(\xi) \big)\text{)}
				\]						
				\item\label{it:132}  Assume that both $\sigma$ and $\sigma'$ are lateral, let $\zeta$ denote  
				the endpoint of $\xi$, and let $\gamma,\gamma'\in\Xi$ be two $1$-cells adjacent to $\sigma$ and $\sigma'$, respectively, and having $\zeta$ as an endpoint (e.g.  $\color{royalblue}\bm{ a}. \text{\textbf{vi}}$, $\color{royalblue}\bm{ a}. \text{\textbf{vii}}$, $\color{royalblue}\bm{b}. \text{\textbf{vi}}$ or $\color{royalblue}\bm{b}.\text{\textbf{vii}}$).\\ 
					\begin{enumerate}[topsep=0mm,itemsep=0mm,label=\textbf{1.3.2.\arabic*.},ref=\textbf{1.3.2.\arabic*}]
							
						\item\label{it:1321} If $\gamma$ and $\gamma'$ are essential to $\sigma$ and $\sigma'$, respectively, we set 
							\[
							\Phi_F(\xi) := \Raw\big(F(\xi) \big) \text{ (resp. } \Phi_F(\xi) := \Ras\big(F(\xi) \big)\text{)}
							\]														
						\item\label{it:1322} If $\gamma$ or $\gamma'$ is not essential to $\sigma$, and to $\sigma'$ respectively, we set
							\[
								\Phi_F(\xi) := \Phi_F(\zeta)
							\]
				\end{enumerate}			
			\end{enumerate}					
		\end{enumerate}
	
	\item\label{it:2} Assume that $\dim\xi = 2$.\\ 

	\begin{enumerate}[topsep=0mm,itemsep=0mm,label=\textbf{2.\arabic*.},ref=\textbf{2.\arabic*}]
		
		\item\label{it:21} If $\xi$ is irrelevant (e.g. $\color{royalblue}\bm a.\text{\textbf{iv}}$), we set		
		\[
		\Phi_F(\xi) := \emptyset.
		\]		
		\item\label{it:22} Assume that $\xi$ is relevant.\\ 		
		
			\begin{enumerate}[topsep=0mm,itemsep=0mm,label=\textbf{2.2.\arabic*.},ref=\textbf{2.2.\arabic*}]
			
				\item\label{it:221} If $\xi$ is diagonal (e.g. $\color{royalblue}\bm a.\text{\textbf{v}}$, $\color{royalblue}\bm b.\text{\textbf{v}}$ or $\color{royalblue}\bm c.\text{\textbf{v}}$), we set		
				\[
				\Phi_F(\xi) := F(\xi).
				\]				
				\item\label{it:222} If  $\xi$ is lateral, we set 
				\[
				\Phi_F(\xi) := \bigcup_{\gamma\in\Xi} F(\gamma),
				\] where $\gamma$ runs over all $1$-cells in $\Xi$ that are essential to $\xi$ (c.f. Figure~\ref{fig:essentiality_132}).						
			\end{enumerate}						
	\end{enumerate}	
\end{enumerate}
\end{definition}

\begin{example}\label{ex:for_def_main}
%Most situations in
% Definition~\ref{def:main} are represented in Figure~\ref{fig:critical_set-disriminant} where the subdivision $\Xi$ of $\R^2$, induced by $F$ from~\eqref{eq:main_example_trop}. Below are some examples from Example~\ref{ex:main} that we chose to mention.
Each situation in  Definition~\ref{def:main} has a representative in some Figure in this paper. Below are some examples that we mention for each case. 
 
\begin{itemize}

	\item[\ref{it:011}] Figure~\ref{fig:critical_set-disriminant}: The left-most intersection point $\mu$ of $\VT(f_1)\cap \VT(f_2)$.
	
	\item[\ref{it:012}] Figure~\ref{fig:critical_set-disriminant}: Intersection points $\beta$, $\gamma$, $\delta$ and $\mu$.
	
	\item[\ref{it:021}] Figure~\ref{fig:critical_set-disriminant}: Vertexes $\lambda$, $\alpha$,  $\kappa$ and the one to the left of $\delta$.

	\item[\ref{it:022}] Figure~\ref{fig:critical_set-disriminant}: Every lateral cell is adjacent to at least one vertex of type~\ref{it:022}.
	
	\item[\ref{it:11}] Figure~\ref{fig:critical_set-disriminant}:  Edges joining $\beta$ to $\lambda$ and $\beta$ to $\alpha$.
	
	\item[\ref{it:12}] Figure~\ref{fig:critical_set-disriminant}: Edges joining $\gamma$ to $\nu$ and $\beta$ to $\gamma$.
	
	\item[\ref{it:131}] Figure~\ref{fig:it131}: The $1$-cell $\tilde{\xi}$ is adjacent to two relevant diagonal $2$-cells.
	
	\item[\ref{it:1321}] Figure~\ref{fig:it131}: The $1$-cell $\xi$ is adjacent to two relevant lateral $2$-cells $\sigma$ and $\sigma'$.
	
	\item[\ref{it:1322}] Figure~\ref{fig:subdivisions}: Any vertical or horizontal ray is an example for \ref{it:1322}.
	
	\item[\ref{it:21}] Figure~\ref{fig:critical_set-disriminant}: Irrelevant cells are represented in white.
	
	\item[\ref{it:221}] Figure~\ref{fig:critical_set-disriminant}: Diagonal relevant cell is the only yellow bounded one.
	
	\item[\ref{it:222}] Figure~\ref{fig:essentiality_132}: $\gamma$ is the only essential cell to $\xi$.

\end{itemize} 
\end{example}

\begin{figure}

\tikzset{every picture/.style={line width=0.75pt}} %set default line width to 0.75pt        

\begin{tikzpicture}[x=0.75pt,y=0.75pt,yscale=-1,xscale=1]
%uncomment if require: \path (0,292); %set diagram left start at 0, and has height of 292

%Shape: Polygon [id:ds5221143345843191] 
\draw  [color={rgb, 255:red, 248; green, 231; blue, 28 }  ,draw opacity=0 ][fill={rgb, 255:red, 248; green, 231; blue, 28 }  ,fill opacity=0.3 ] (240.14,95.28) -- (274.42,104.86) -- (249.67,129.61) -- (240.14,120.03) -- cycle ;
%Shape: Polygon [id:ds731640787550129] 
\draw  [color={rgb, 255:red, 248; green, 231; blue, 28 }  ,draw opacity=0 ][fill={rgb, 255:red, 248; green, 231; blue, 28 }  ,fill opacity=0.3 ] (240.14,120.03) -- (249.67,129.61) -- (224.92,154.36) -- (215.39,120.03) -- cycle ;
%Straight Lines [id:da3923002357098019] 
\draw [color={rgb, 255:red, 0; green, 0; blue, 0 }  ,draw opacity=1 ][fill={rgb, 255:red, 74; green, 144; blue, 226 }  ,fill opacity=1 ][line width=0.75]    (173.76,154.5) -- (149.01,129.75) ;
%Shape: Circle [id:dp07717188384298335] 
\draw  [color={rgb, 255:red, 0; green, 0; blue, 0 }  ,draw opacity=1 ][fill={rgb, 255:red, 0; green, 0; blue, 0 }  ,fill opacity=1 ] (171.77,154.5) .. controls (171.77,153.4) and (172.66,152.51) .. (173.76,152.51) .. controls (174.86,152.51) and (175.75,153.4) .. (175.75,154.5) .. controls (175.75,155.6) and (174.86,156.49) .. (173.76,156.49) .. controls (172.66,156.49) and (171.77,155.6) .. (171.77,154.5) -- cycle ;
%Shape: Circle [id:dp577475398249498] 
\draw  [color={rgb, 255:red, 0; green, 0; blue, 0 }  ,draw opacity=1 ][fill={rgb, 255:red, 255; green, 255; blue, 255 }  ,fill opacity=1 ] (122.27,154.5) .. controls (122.27,153.4) and (123.16,152.51) .. (124.26,152.51) .. controls (125.36,152.51) and (126.25,153.4) .. (126.25,154.5) .. controls (126.25,155.6) and (125.36,156.49) .. (124.26,156.49) .. controls (123.16,156.49) and (122.27,155.6) .. (122.27,154.5) -- cycle ;
%Straight Lines [id:da5607928063776095] 
\draw [color={rgb, 255:red, 74; green, 144; blue, 226 }  ,draw opacity=1 ][fill={rgb, 255:red, 74; green, 144; blue, 226 }  ,fill opacity=1 ][line width=0.75]    (73.98,129.75) -- (98.73,105) ;
%Shape: Circle [id:dp23423482341536495] 
\draw  [color={rgb, 255:red, 74; green, 144; blue, 226 }  ,draw opacity=1 ][fill={rgb, 255:red, 74; green, 144; blue, 226 }  ,fill opacity=1 ] (96.74,105) .. controls (96.74,103.9) and (97.63,103.01) .. (98.73,103.01) .. controls (99.82,103.01) and (100.71,103.9) .. (100.71,105) .. controls (100.71,106.1) and (99.82,106.99) .. (98.73,106.99) .. controls (97.63,106.99) and (96.74,106.1) .. (96.74,105) -- cycle ;
%Shape: Circle [id:dp709646189506205] 
\draw  [color={rgb, 255:red, 74; green, 144; blue, 226 }  ,draw opacity=1 ][fill={rgb, 255:red, 255; green, 255; blue, 255 }  ,fill opacity=1 ] (47.24,154.5) .. controls (47.24,153.4) and (48.13,152.51) .. (49.23,152.51) .. controls (50.32,152.51) and (51.21,153.4) .. (51.21,154.5) .. controls (51.21,155.6) and (50.32,156.49) .. (49.23,156.49) .. controls (48.13,156.49) and (47.24,155.6) .. (47.24,154.5) -- cycle ;
%Shape: Circle [id:dp20558100041333416] 
\draw  [color={rgb, 255:red, 74; green, 144; blue, 226 }  ,draw opacity=1 ][fill={rgb, 255:red, 74; green, 144; blue, 226 }  ,fill opacity=1 ] (71.99,105) .. controls (71.99,103.9) and (72.88,103.01) .. (73.98,103.01) .. controls (75.07,103.01) and (75.96,103.9) .. (75.96,105) .. controls (75.96,106.1) and (75.07,106.99) .. (73.98,106.99) .. controls (72.88,106.99) and (71.99,106.1) .. (71.99,105) -- cycle ;
%Shape: Circle [id:dp5818810096756095] 
\draw  [color={rgb, 255:red, 74; green, 144; blue, 226 }  ,draw opacity=1 ][fill={rgb, 255:red, 74; green, 144; blue, 226 }  ,fill opacity=1 ] (71.99,129.75) .. controls (71.99,128.65) and (72.88,127.76) .. (73.98,127.76) .. controls (75.07,127.76) and (75.96,128.65) .. (75.96,129.75) .. controls (75.96,130.85) and (75.07,131.74) .. (73.98,131.74) .. controls (72.88,131.74) and (71.99,130.85) .. (71.99,129.75) -- cycle ;
%Straight Lines [id:da20737447140452137] 
\draw [color={rgb, 255:red, 74; green, 144; blue, 226 }  ,draw opacity=1 ][fill={rgb, 255:red, 74; green, 144; blue, 226 }  ,fill opacity=1 ][line width=0.75]    (73.98,129.75) -- (73.98,105) ;
%Straight Lines [id:da46829856814194903] 
\draw [color={rgb, 255:red, 74; green, 144; blue, 226 }  ,draw opacity=1 ][fill={rgb, 255:red, 74; green, 144; blue, 226 }  ,fill opacity=1 ][line width=0.75]    (98.73,105) -- (73.98,105) ;
%Shape: Circle [id:dp07866634686734986] 
\draw  [color={rgb, 255:red, 0; green, 0; blue, 0 }  ,draw opacity=1 ][fill={rgb, 255:red, 0; green, 0; blue, 0 }  ,fill opacity=1 ] (147.02,129.75) .. controls (147.02,128.65) and (147.91,127.76) .. (149.01,127.76) .. controls (150.11,127.76) and (151,128.65) .. (151,129.75) .. controls (151,130.85) and (150.11,131.74) .. (149.01,131.74) .. controls (147.91,131.74) and (147.02,130.85) .. (147.02,129.75) -- cycle ;
%Straight Lines [id:da47357124336517187] 
\draw [color={rgb, 255:red, 74; green, 144; blue, 226 }  ,draw opacity=1 ][fill={rgb, 255:red, 74; green, 144; blue, 226 }  ,fill opacity=1 ][line width=0.75]    (240.14,120.03) -- (264.89,144.78) ;
%Straight Lines [id:da7967823619736835] 
\draw [color={rgb, 255:red, 74; green, 144; blue, 226 }  ,draw opacity=1 ][fill={rgb, 255:red, 74; green, 144; blue, 226 }  ,fill opacity=1 ][line width=0.75]    (215.39,120.03) -- (240.14,120.03) ;
%Straight Lines [id:da8444928000650406] 
\draw [color={rgb, 255:red, 74; green, 144; blue, 226 }  ,draw opacity=1 ][fill={rgb, 255:red, 74; green, 144; blue, 226 }  ,fill opacity=1 ][line width=0.75]    (240.14,120.03) -- (240.14,95.28) ;
%Straight Lines [id:da46630687618043165] 
\draw [color={rgb, 255:red, 0; green, 0; blue, 0 }  ,draw opacity=1 ][fill={rgb, 255:red, 74; green, 144; blue, 226 }  ,fill opacity=1 ][line width=0.75]  [dash pattern={on 3pt off 0.75pt}]  (224.92,154.36) -- (274.42,104.86) ;
%Straight Lines [id:da3963001328652451] 
%\draw [color={rgb, 255:red, 155; green, 155; blue, 155 }  ,draw opacity=0.5 ]   (332.59,129.81) -- (402.59,129.81) ;
%Shape: Polygon [id:ds44643854848670383] 
\draw  [color={rgb, 255:red, 248; green, 231; blue, 28 }  ,draw opacity=0 ][fill={rgb, 255:red, 248; green, 231; blue, 28 }  ,fill opacity=0.3 ] (514.34,118.39) -- (535.67,129.05) -- (535.67,153.8) -- (496.78,118.39) -- cycle ;
%Shape: Polygon [id:ds4709961007980541] 
\draw  [color={rgb, 255:red, 248; green, 231; blue, 28 }  ,draw opacity=0 ][fill={rgb, 255:red, 248; green, 231; blue, 28 }  ,fill opacity=0.3 ] (535.67,129.05) -- (565.17,129.05) -- (565.17,153.34) -- (535.67,153.8) -- cycle ;
%Shape: Circle [id:dp0005632330946189201] 
\draw  [color={rgb, 255:red, 0; green, 0; blue, 0 }  ,draw opacity=1 ][fill={rgb, 255:red, 0; green, 0; blue, 0 }  ,fill opacity=1 ] (434.02,154.5) .. controls (434.02,153.4) and (434.91,152.51) .. (436.01,152.51) .. controls (437.11,152.51) and (438,153.4) .. (438,154.5) .. controls (438,155.6) and (437.11,156.49) .. (436.01,156.49) .. controls (434.91,156.49) and (434.02,155.6) .. (434.02,154.5) -- cycle ;
%Shape: Circle [id:dp874262029784388] 
\draw  [color={rgb, 255:red, 0; green, 0; blue, 0 }  ,draw opacity=1 ][fill={rgb, 255:red, 255; green, 255; blue, 255 }  ,fill opacity=1 ] (409.27,154.5) .. controls (409.27,153.4) and (410.16,152.51) .. (411.26,152.51) .. controls (412.36,152.51) and (413.25,153.4) .. (413.25,154.5) .. controls (413.25,155.6) and (412.36,156.49) .. (411.26,156.49) .. controls (410.16,156.49) and (409.27,155.6) .. (409.27,154.5) -- cycle ;
%Straight Lines [id:da12053569046412493] 
\draw [color={rgb, 255:red, 74; green, 144; blue, 226 }  ,draw opacity=1 ][fill={rgb, 255:red, 74; green, 144; blue, 226 }  ,fill opacity=1 ][line width=0.75]    (360.98,129.75) -- (385.73,105) ;
%Shape: Circle [id:dp5385585239443526] 
\draw  [color={rgb, 255:red, 74; green, 144; blue, 226 }  ,draw opacity=1 ][fill={rgb, 255:red, 74; green, 144; blue, 226 }  ,fill opacity=1 ] (383.74,105) .. controls (383.74,103.9) and (384.63,103.01) .. (385.73,103.01) .. controls (386.82,103.01) and (387.71,103.9) .. (387.71,105) .. controls (387.71,106.1) and (386.82,106.99) .. (385.73,106.99) .. controls (384.63,106.99) and (383.74,106.1) .. (383.74,105) -- cycle ;
%Shape: Circle [id:dp05100824730255149] 
\draw  [color={rgb, 255:red, 74; green, 144; blue, 226 }  ,draw opacity=1 ][fill={rgb, 255:red, 255; green, 255; blue, 255 }  ,fill opacity=1 ] (334.24,154.5) .. controls (334.24,153.4) and (335.13,152.51) .. (336.23,152.51) .. controls (337.32,152.51) and (338.21,153.4) .. (338.21,154.5) .. controls (338.21,155.6) and (337.32,156.49) .. (336.23,156.49) .. controls (335.13,156.49) and (334.24,155.6) .. (334.24,154.5) -- cycle ;
%Shape: Circle [id:dp7982094474552777] 
\draw  [color={rgb, 255:red, 74; green, 144; blue, 226 }  ,draw opacity=1 ][fill={rgb, 255:red, 74; green, 144; blue, 226 }  ,fill opacity=1 ] (358.99,154.5) .. controls (358.99,153.4) and (359.88,152.51) .. (360.98,152.51) .. controls (362.07,152.51) and (362.96,153.4) .. (362.96,154.5) .. controls (362.96,155.6) and (362.07,156.49) .. (360.98,156.49) .. controls (359.88,156.49) and (358.99,155.6) .. (358.99,154.5) -- cycle ;
%Shape: Circle [id:dp4404423968489576] 
\draw  [color={rgb, 255:red, 74; green, 144; blue, 226 }  ,draw opacity=1 ][fill={rgb, 255:red, 74; green, 144; blue, 226 }  ,fill opacity=1 ] (383.74,154.5) .. controls (383.74,153.4) and (384.63,152.51) .. (385.73,152.51) .. controls (386.82,152.51) and (387.71,153.4) .. (387.71,154.5) .. controls (387.71,155.6) and (386.82,156.49) .. (385.73,156.49) .. controls (384.63,156.49) and (383.74,155.6) .. (383.74,154.5) -- cycle ;
%Straight Lines [id:da3552367849357059] 
\draw [color={rgb, 255:red, 74; green, 144; blue, 226 }  ,draw opacity=1 ][fill={rgb, 255:red, 74; green, 144; blue, 226 }  ,fill opacity=1 ][line width=0.75]    (360.98,154.5) -- (360.98,129.75) ;
%Straight Lines [id:da5314755386835509] 
\draw [color={rgb, 255:red, 74; green, 144; blue, 226 }  ,draw opacity=1 ][fill={rgb, 255:red, 74; green, 144; blue, 226 }  ,fill opacity=1 ][line width=0.75]    (385.73,154.5) -- (360.98,154.5) ;
%Straight Lines [id:da0936917502086263] 
\draw [color={rgb, 255:red, 74; green, 144; blue, 226 }  ,draw opacity=1 ][fill={rgb, 255:red, 74; green, 144; blue, 226 }  ,fill opacity=1 ][line width=0.75]    (385.73,154.5) -- (385.73,105) ;
%Shape: Circle [id:dp5288075135773188] 
\draw  [color={rgb, 255:red, 74; green, 144; blue, 226 }  ,draw opacity=1 ][fill={rgb, 255:red, 74; green, 144; blue, 226 }  ,fill opacity=1 ] (358.99,129.75) .. controls (358.99,128.65) and (359.88,127.76) .. (360.98,127.76) .. controls (362.07,127.76) and (362.96,128.65) .. (362.96,129.75) .. controls (362.96,130.85) and (362.07,131.74) .. (360.98,131.74) .. controls (359.88,131.74) and (358.99,130.85) .. (358.99,129.75) -- cycle ;
%Straight Lines [id:da07035863805636222] 
\draw [color={rgb, 255:red, 74; green, 144; blue, 226 }  ,draw opacity=1 ][fill={rgb, 255:red, 74; green, 144; blue, 226 }  ,fill opacity=1 ][line width=0.75]    (360.98,154.5) -- (385.73,105) ;
%Straight Lines [id:da4730394523686774] 
\draw [color={rgb, 255:red, 0; green, 0; blue, 0 }  ,draw opacity=1 ][fill={rgb, 255:red, 74; green, 144; blue, 226 }  ,fill opacity=1 ][line width=0.75]    (460.76,154.5) -- (436.01,154.5) ;
%Shape: Circle [id:dp43996298076217266] 
\draw  [color={rgb, 255:red, 0; green, 0; blue, 0 }  ,draw opacity=1 ][fill={rgb, 255:red, 0; green, 0; blue, 0 }  ,fill opacity=1 ] (458.77,154.5) .. controls (458.77,153.4) and (459.66,152.51) .. (460.76,152.51) .. controls (461.86,152.51) and (462.75,153.4) .. (462.75,154.5) .. controls (462.75,155.6) and (461.86,156.49) .. (460.76,156.49) .. controls (459.66,156.49) and (458.77,155.6) .. (458.77,154.5) -- cycle ;
%Straight Lines [id:da4471492828449434] 
\draw [color={rgb, 255:red, 74; green, 144; blue, 226 }  ,draw opacity=1 ][fill={rgb, 255:red, 74; green, 144; blue, 226 }  ,fill opacity=1 ][line width=0.75]    (496.78,100.83) -- (514.34,118.39) ;
%Straight Lines [id:da9125678203994967] 
\draw [color={rgb, 255:red, 74; green, 144; blue, 226 }  ,draw opacity=1 ][fill={rgb, 255:red, 74; green, 144; blue, 226 }  ,fill opacity=1 ][line width=0.75]    (496.78,118.39) -- (514.34,118.39) ;
%Straight Lines [id:da5512523978591238] 
\draw [color={rgb, 255:red, 74; green, 144; blue, 226 }  ,draw opacity=1 ][fill={rgb, 255:red, 74; green, 144; blue, 226 }  ,fill opacity=1 ][line width=1.5]    (535.67,153.8) -- (535.67,129.05) ;
%Straight Lines [id:da8680776025350369] 
\draw [color={rgb, 255:red, 74; green, 144; blue, 226 }  ,draw opacity=1 ][fill={rgb, 255:red, 74; green, 144; blue, 226 }  ,fill opacity=1 ][line width=0.75]    (535.67,129.05) -- (565.17,129.05) ;
%Straight Lines [id:da09215708295716885] 
\draw [color={rgb, 255:red, 74; green, 144; blue, 226 }  ,draw opacity=1 ][fill={rgb, 255:red, 74; green, 144; blue, 226 }  ,fill opacity=1 ][line width=0.75]    (514.34,118.39) -- (535.67,129.05) ;
%Straight Lines [id:da8816257180726316] 
\draw [color={rgb, 255:red, 0; green, 0; blue, 0 }  ,draw opacity=1 ][fill={rgb, 255:red, 74; green, 144; blue, 226 }  ,fill opacity=1 ][line width=0.75]  [dash pattern={on 3pt off 0.75pt}]  (558.23,154.75) -- (558.23,100) ;
%Shape: Polygon [id:ds8989888960240258] 
\draw  [color={rgb, 255:red, 248; green, 231; blue, 28 }  ,draw opacity=0 ][fill={rgb, 255:red, 74; green, 144; blue, 226 }  ,fill opacity=0.4 ] (385.73,154.5) -- (360.98,154.5) -- (360.98,129.75) -- (385.73,105) -- cycle ;
%Shape: Polygon [id:ds7583211650484941] 
\draw  [color={rgb, 255:red, 248; green, 231; blue, 28 }  ,draw opacity=0 ][fill={rgb, 255:red, 74; green, 144; blue, 226 }  ,fill opacity=0.4 ] (98.73,105) -- (73.98,129.75) -- (73.98,105) -- cycle ;
%Straight Lines [id:da937955015648676] 
\draw [color={rgb, 255:red, 74; green, 144; blue, 226 }  ,draw opacity=1 ][line width=1.5]    (240.14,120.03) -- (249.67,129.61) ;

% Text Node
\draw (246.14,111.63) node [anchor=north west][inner sep=0.75pt]  [font=\tiny]  {$\tilde{\xi} $};
% Text Node
\draw (533.27,157) node [anchor=north west][inner sep=0.75pt]  [font=\tiny]  {$\xi $};
% Text Node
\draw (523.46,134.67) node [anchor=north west][inner sep=0.75pt]  [font=\tiny]  {$\sigma $};
% Text Node
\draw (541.46,137.07) node [anchor=north west][inner sep=0.75pt]  [font=\tiny]  {$\sigma '$};
% Text Node
\draw (525.66,118.47) node [anchor=south west] [inner sep=0.75pt]  [font=\tiny]  {$\gamma $};
% Text Node
\draw (550.22,124.05) node [anchor=south] [inner sep=0.75pt]  [font=\tiny]  {$\gamma '$};
% Text Node
%\draw (419.59,131.41) node [anchor=south east] [inner sep=0.75pt]  [font=\tiny]  {$L_{\lambda _{0}}$};

\end{tikzpicture}

\caption{Examples of a 1-cell as in Definition~\ref{def:main}:~\ref{it:131} and~\ref{it:1321} respectively.}\label{fig:it131}
\end{figure}
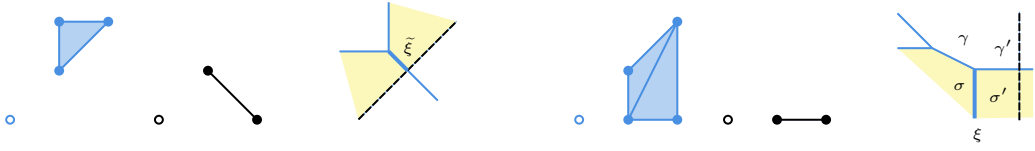

\begin{remark}\label{rem:prop-all-cases}
Lemma~\ref{lem:adjacent_diag_relevant} in \S\ref{sec:tropical_critical} shows that if $\xi\in\Xi$ is directly adjacent to two relevant cells, then $\xi$ is in one of the two situations \textbf{1.3.1} and \textbf{1.3.2} in Definition~\ref{def:main}. Therefore, Definition~\ref{def:main} covers all possible cases.
\end{remark}

\begin{figure}[htb]
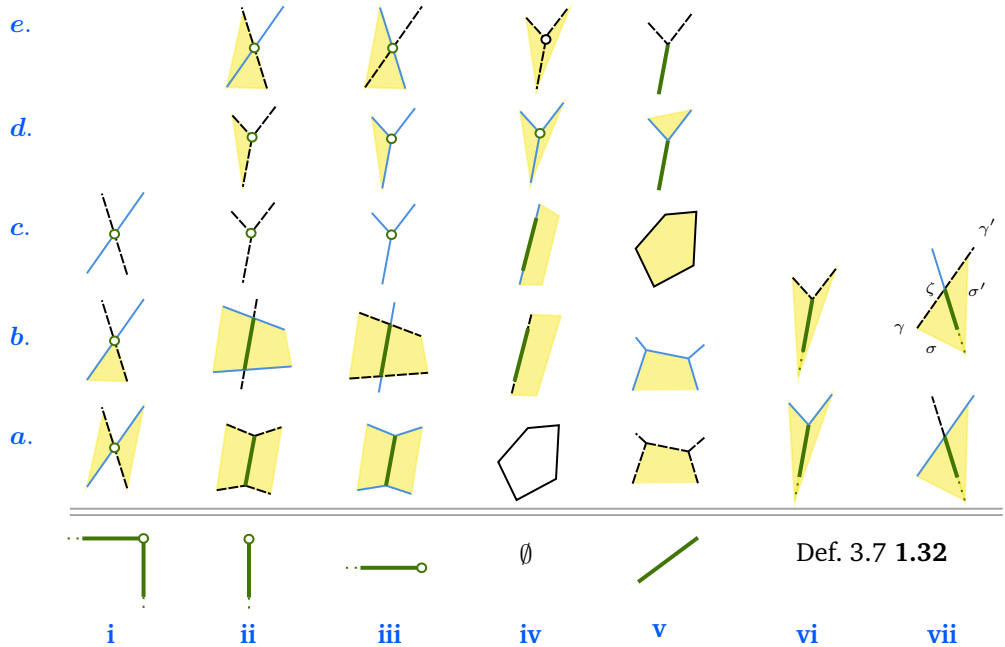


\tikzset{every picture/.style={line width=0.75pt}} %set default line width to 0.75pt        

% [inline block 1: 1 envs, 35182 chars -> data_tex | \begin{tikzpicture}[x=0.8pt,y=0.8pt,yscale=-1,xscale=1] %uncomment if require: \path (0,292); %set diagram left start at...]

\caption{Above the double line: representative examples of several possible situations for a cell $\xi\in\Xi$ (in green) to have. The yellow regions represent relevant cells, thin continuous and dashed lines represent sections of $T_1$ and $T_2$, respectively. Below the double line: the image $\Phi_F(\xi)\subset\R^2$ of the cell $\xi$ above it.}\label{fig:situations-table}
\end{figure}

\subsection{Stable subdivisions}\label{sub:valuation-space} We state two useful observations related to stable subdivisions. The following lemma will be useful for the proof of Theorem~\ref{thm:main}.
\begin{lemma}[Stable intersections]\label{lem:transversal}
Let $g_1,g_2\in \K[z_1,z_2]$ be two polynomials whose respective tropical curves $\VT(g_1)$ and $\VT( g_2)$ have a stable intersection point $x\in \R^2$ (c.f. Definition~\ref{def:Transv}). Then, all solutions to $g_1=g_2=0$ in $\TK$ with valuation $x$ are simple.
\end{lemma}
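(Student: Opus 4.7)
My plan is to reduce the statement to the non-degeneracy of a pair of binomial equations over $\C$ via the initial-form machinery, and then to lift using the standard Hensel-type argument for Puiseux series.

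First, since $x$ is a stable intersection point, it lies in a $0$-dimensional cell of $\Xi$, so by Proposition~\ref{prop:mixed_subd}(2) its dual $\delta$ has dimension $2$. Writing $x = \xi_1 \cap \xi_2$ with $\xi_i \in \Xi_i$, the equality $\dim \delta = \dim \delta_1 + \dim \delta_2$ in Definition~\ref{def:Transv} forces $\dim \delta_1 = \dim \delta_2 = 1$, so $\xi_1$ and $\xi_2$ are edges of $\VT(g_1)$ and $\VT(g_2)$ respectively, the duals $\delta_i \subset \cN(g_i)$ are segments with endpoints $a_i, b_i$, and the vectors $v_i := b_i - a_i$ are linearly independent in $\R^2$ (because the Minkowski sum of two parallel segments would be one-dimensional, contradicting $\dim\delta = 2$). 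The second clause of the stability condition then pins down the supports: the lifts $\{(a, -\nu_{g_i}(a)) : a \in \delta_i \cap A_i\}$ all lie on a single affine line in $\R^3$, since $a \mapsto \nu_{g_i}(a)$ is the restriction of an affine function on $\delta_i \cap A_i$ (a consequence of $\xi_i$ being an edge of $\VT(g_i)$). The vertices of a collinear point configuration are its two extremal points, so the stability clause forces every lattice point of $\delta_i$ other than $a_i$ and $b_i$ to lie outside $A_i$; equivalently, only $a_i$ and $b_i$ carry non-zero coefficients of $g_i$ among the points of $\delta_i$.

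Next I would form the initial system at $x$. By the previous paragraph, the initial form of $g_i$ at $x$ is the binomial
\[
\mathrm{in}_x(g_i)(z) \;=\; \overline{c_{a_i}}\, z^{a_i} \;+\; \overline{c_{b_i}}\, z^{b_i}, \qquad \overline{c_{a_i}}, \overline{c_{b_i}} \in \C\setminus 0.
\]
Its zero locus in $(\C\setminus 0)^2$ is the subtorus $\{z^{v_i} = -\overline{c_{a_i}}/\overline{c_{b_i}}\}$, and the joint zero set of $\mathrm{in}_x(g_1), \mathrm{in}_x(g_2)$ is finite, of cardinality $|\det(v_1, v_2)|$. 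At any common zero $\tilde z$, substituting the binomial relations $\overline{c_{b_i}}\,\tilde z^{b_i} = -\overline{c_{a_i}}\,\tilde z^{a_i}$ into each partial derivative shows that the Jacobian matrix $\Jac_{\tilde z}(\mathrm{in}_x(g_1), \mathrm{in}_x(g_2))$ is a non-zero diagonal scaling of the integer matrix with rows $v_1, v_2$. Since $\det(v_1, v_2) \neq 0$ by the first paragraph, every such zero is simple.

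The last step is the lift from $(\C\setminus 0)^2$ to $\TK$. By Kapranov's Theorem~\ref{th:Kapranov} and the standard Newton/Hensel procedure for Puiseux series (see e.g.~\cite{MS15}), every simple zero $\tilde z$ of the initial system is $\overline{z}$ for a unique solution $z \in \TK$ of $g_1 = g_2 = 0$ with $\Val(z) = x$, and the leading form of $\Jac_z(g_1, g_2)$ coincides, up to a non-zero monomial factor, with $\Jac_{\tilde z}(\mathrm{in}_x(g_1), \mathrm{in}_x(g_2))$; hence $z$ is simple as well. The main obstacle is the second step of the first paragraph, namely squeezing out from the second stability clause the fact that the initial forms are genuinely binomial with both coefficients non-zero; once that is in place the remainder is a lattice-determinant calculation together with a classical lifting lemma.
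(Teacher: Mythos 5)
Your route is genuinely different from the paper's, and after one local repair it works. The paper does not manipulate initial forms at all: it invokes the Cayley-trick results of Dickenstein--Tabera, using \cite[Lemma 5.2]{DiTa12} to identify valuations of degenerate solutions of $g_1=g_2=0$ with valuations of singular roots of the auxiliary polynomial $G=g_1+z_3g_2\in\K[z_1,z_2,z_3]$, and then \cite[Lemma 3.1]{DiTa12}, checking that the unique point of $\VT(G)$ over $x$ is a vertex whose dual cell is a tetrahedron --- which amounts to the same key fact you use, namely that exactly two tropical monomials of each $g_i^{\trop}$ attain the maximum at $x$. Your proof replaces this black box by a direct computation: binomial initial forms with independent exponent vectors $v_1,v_2$, nondegeneracy of every torus zero of the binomial system (the Jacobian being a monomial scaling of the matrix with rows $v_1,v_2$), and a leading-coefficient comparison showing $\det\Jac_z(g_1,g_2)\neq 0$ for every $z$ with $\Val(z)=x$; as a bonus it yields the local count $|\det(v_1,v_2)|$ of such solutions. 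The paper's argument, in exchange, needs no Hensel-type lifting discussion and plugs directly into the singularity framework it uses elsewhere.

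The step you flagged as the main obstacle is indeed where your write-up is wrong, though the conclusion you extract survives. It is not true that the lifts $(a,-\nu_{g_i}(a))$, $a\in\delta_i\cap A_i$, are collinear, nor that stability forces interior lattice points of $\delta_i$ to lie outside $A_i$: the function $\nu_{g_i}$ is affine only on the maximizing set $\mathcal{I}_{\xi_i}$, while a point $a\in\delta_i\cap A_i$ with $\langle x,a\rangle+\val(c_a)$ strictly below the maximum lifts strictly above that line and is then typically itself a vertex of the convex hull, hence perfectly compatible with Definition~\ref{def:Transv} (for instance a middle term such as $tz_1^2$ sitting between $z_1$ and $z_1^3$ when $x$ lies on the corresponding edge). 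The correct argument is: the lifts of the points of $\mathcal{I}_{\xi_i}$ are collinear and lie in the hull, every other generator lies on or strictly above that line, so the two extreme maximizers are vertices while any intermediate maximizer would belong to the hull without being a vertex, violating stability; hence $|\mathcal{I}_{\xi_i}|=2$ and the two maximizers are the endpoints of $\delta_i$. With the binomial initial forms established this way, the rest of your argument (independence of $v_1,v_2$ from $\dim\delta=2$, the Jacobian of the binomial system at a common zero, and the transfer of nondegeneracy to every solution lying over $x$) is sound.
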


\begin{proof}
\cite[Lemma 5.2]{DiTa12} shows that $(y_1,y_2)\in\R^2$ is the valuation of a degenerate solution to $g_1 = g_2 = 0$ if and only if there exists $y_3\in\R$ so that $(y_1,y_2,y_3)$ is the valuation of a singular root to the polynomial $G:= g_1(z_1,z_2)+ z_3 g_2(z_1,z_2)\in\K[z_1,z_2,z_3]$. Let $\tau$ be the dual subdivision of $\cN(G)$, induced by the tropical hypersurface $\VT(G)\subset\R^3$. Then,~\cite[Lemma 3.1]{DiTa12} shows that for any vertex $\overline{p}\in\VT(G)$, the polyhedron $\delta(\overline{p})\in\tau$ is not a tetrahedron if $\overline{p}$ is the valuation of a singular root of $G$. 

For the stable intersection point $(x_1,x_2)$ of $\VT(g_1)$ and $\VT( g_2)$, one can check that there exists exactly one $x_3$ so that $\overline{x}:=(x_1,x_2,x_3)\in\VT(G)$. Furthermore, the point $\overline{x}$ is a vertex whose dual $\delta(\overline{x})$ is a tetrahedron in $\tau$. Indeed, when evaluated at $x$, exactly two terms in $g_i^{\trop}$ ($i=1,2$) reach their maximum. Therefore, $\overline{x}$ is not the valuation of a singular root of $\VT(G)$ (\cite[Lemma 3.1]{DiTa12}), and thus $x$ is not the valuation of any degenerate solution to $g_1 = g_2 = 0$ (\cite[Lemma 5.2]{DiTa12}).
\end{proof}

% Let $A$ denote the pair of supports $(A_1,A_2)$
%, and let $\KW$ denote the space of pairs  $(f_1,f_2)\in\K[A_1,A_2]$, where $f_i$ has support $A_i$ ($i=1,2$). 

\begin{lemma}\label{lem:gen-valued_transversal}
There exists a Zariski open subset $\Omega_1\subset\KW$ for which any $f\in \Omega_1$ induces (through $f^{\trop}$) a stable subdivision $\Xi$ of $\R^2$ (see Definition~\ref{def:Transv}), where the dual subdivisions $\tau_1$ and $\tau_2$ are triangulations of the respective Newton polytopes.

\end{lemma}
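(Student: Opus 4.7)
The plan is to decompose the conditions defining $\Omega_1$ into two parts: $\Omega_1 = \Omega_1^{\mathrm{tri}} \cap \Omega_1^{\mathrm{st}}$, where $\Omega_1^{\mathrm{tri}} \subset \K[A]$ contains those $f$ for which both $\tau_1, \tau_2$ are triangulations, and $\Omega_1^{\mathrm{st}} \subset \K[A]$ contains those for which $\Xi$ is stable in the sense of Definition~\ref{def:Transv}. Since both conditions are read off from $f^{\trop}$, they depend on $f$ only through the real-valued height vector $h = (h_a)_{a \in A_1 \sqcup A_2}$ with $h_a := -\val(c_a)$. The strategy is to first characterize each as an open and dense restriction on $h$, and then upgrade to a Zariski open condition in $\K[A]$.

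For $\Omega_1^{\mathrm{tri}}$: the regular subdivision $\tau_i$ fails to be a triangulation if and only if some four lifted lattice points $(a, h_a) \in \R^3$ with $a \in A_i$ are coplanar; each such coplanarity is a single linear equation $\sum_{a \in S} m_a h_a = \lambda$ with integer coefficients $m_a$ (the affine dependency of the projections in $\Z^2$) and rational constant $\lambda$. Since $A_i$ is finite, only finitely many such equations (one per $4$-subset of $A_i$) can arise. For $\Omega_1^{\mathrm{st}}$: by Proposition~\ref{prop:mixed_subd}, a cell $\xi = \xi_1 \cap \xi_2 \in \Xi$ is stable exactly when (i) $\dim\delta(\xi) = \dim\delta(\xi_1) + \dim\delta(\xi_2)$ and (ii) every lifted marked point of $\delta(\xi_i)$ is a genuine vertex of its lifted convex hull. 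Condition (ii) is automatic inside $\Omega_1^{\mathrm{tri}}$; condition (i) amounts to transversality of the affine spans of $\delta(\xi_1), \delta(\xi_2)$, which again translates to the non-vanishing of finitely many linear expressions in $h$. Therefore the set of heights meeting both requirements is the complement of a finite union of rational affine hyperplanes in $\R^{|A_1| + |A_2|}$, hence open and dense.

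I expect the main obstacle to be the upgrade from this $\R$-generic condition on $h$ to a Zariski open subset of $\K[A]$. Each forbidden equation $\sum m_a h_a = \lambda$ can be rewritten as $\val\!\left(t^{\lambda} \prod c_a^{m_a}\right) = 0$, and its negation can be enforced by demanding the non-vanishing of certain polynomials in the leading-coefficient data of the $c_a$---a Zariski condition on $\K[A]$ once one works modulo a fixed height stratum. Stitching together the finitely many relevant conditions provides the desired Zariski open $\Omega_1$; non-emptiness is witnessed by any $f$ whose coefficient valuations are $\Q$-linearly independent and avoid the prescribed hyperplanes, which is easily arranged by a single-term perturbation of the leading monomials.
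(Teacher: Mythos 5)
Your reduction --- both the triangulation and the stability requirements depend only on the valuation vector $\Val(f)$, and their failure is expressed by finitely many rational affine-linear relations among the $\val(c_a)$, so the good height vectors form the complement of a finite union of rational hyperplanes $H$ --- is exactly the first half of the paper's argument. The gap is in the step you yourself flag as the main obstacle, namely the passage from this condition on valuations to a Zariski open subset of $\K[A_1,A_2]$, and the mechanism you propose does not deliver it. First, whether a relation $\sum_a m_a\,\val(c_a)=\lambda$ holds is determined by the valuations alone; the leading coefficients of the $c_a$ are irrelevant to it, so ``demanding the non-vanishing of certain polynomials in the leading-coefficient data'' cannot enforce its negation (leading-coefficient genericity is what is needed in Lemma~\ref{lem:crit-points-invar}, to prevent cancellations among terms of equal valuation --- a different issue). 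Second, a condition that is Zariski only ``modulo a fixed height stratum'' is not a Zariski condition on $\K[A_1,A_2]$: the height strata are indexed by real vectors, there are uncountably many of them, and none is Zariski locally closed, so stitching stratum-wise conditions together does not produce a Zariski open set.

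The paper closes this step differently: since the bad locus $H$ consists of finitely many hyperplanes with rational slopes, it is itself a tropical hypersurface in the valuation space $\Val(\bm{T})\cong\R^{|A_1|}\times\R^{|A_2|}$, and Kapranov's theorem (Theorem~\ref{th:Kapranov}) is then invoked to realize it via a single polynomial $P$ in the coefficient variables with $\VT(P)=H$, so that the offending maps are controlled by the algebraic hypersurface $\VK(P)$ and $\Omega_1$ is obtained from its complement inside the coefficient torus. If you want to repair your write-up, this is the ingredient to import: exhibit one polynomial over $\K$ whose tropical hypersurface is the arrangement $H$ and use the correspondence between $\VT(P)$ and $\Val(\VK(P))$, rather than trying to encode hyperplane-avoidance through leading coefficients stratum by stratum. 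A further minor point: your claim that stability condition (ii) is automatic once $\tau_1,\tau_2$ are triangulations needs care, since a marked lattice point of $\delta_i$ can lie on a lower face of the lifted hull without being a vertex even when all dual cells are simplices (e.g.\ a marked midpoint of an edge lifted onto the segment joining the lifted endpoints); this only contributes additional hyperplanes to $H$, so it does not change the structure of the argument, but it should not be waved away.
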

\begin{proof}
Let $\bm{T}$ denote the full-dimensional torus in $\K[A]$. If a map $f\in \bm{T}$ satisfies the opposite of the two properties in the lemma, then the valuations of coefficients of $f$ must satisfy a particular linear combination that depends on $A$. This gives rise to a hyperplane in the Euclidean space $\Val(\bm{T})\cong \R^{|A_1|}\times\R^{|A_2|}$. Let $H$ be the set of all hyperplanes in $\Val(\bm{T})$ obtained in this way. Note that, with $A$ being fixed, there are finitely-many combinatorial types of unstable subdivisions $\Xi$ induced by maps $f\in\K[A]$. Hence, $H$ is a finite union of hyperplanes. In fact, as $A_1,A_2\subset\N^2$, the coefficients of the linear combinations giving rise to $K$ are rational numbers. Therefore $H$ is itself a tropical hypersurface in $\Val(\bm{T})$. Finally, Theorem~\ref{th:Kapranov} shows that $\Omega_1=\bm{T}\cap \Val^{-1}(\Val(\bm{T})\setminus H)$ is the complement of an algebraic variety in $\K[A]$.
\end{proof}

\subsection{Tropical critical points}\label{sub:tropical_critical} 
We will use $\cj_f$ and $C(f)$ to denote the determinant $\det(\Jac_{z}f)\in \K[z_1,z_2]$, and its zero locus $\VK(\cj_f)\in\TK$ respectively. The latter is called the set of \emph{critical points of $f$}. 

\begin{lemma}\label{lem:crit-points-invar}
There exists a Zariski open subset $\Omega_2\subset\KW$, in which any two maps $f,g\in \Omega_2$ satisfy
\[
f^{\trop} = g^{\trop}\Rightarrow\Val(C(f)) = \Val(C(g)).
\]
\end{lemma}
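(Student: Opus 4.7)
My plan is to show that generically the valuations of the coefficients of $\cj_f$, viewed as a polynomial in $z_1, z_2$ over $\K$, depend only on $f^{\trop}$; Kapranov's theorem applied to $\cj_f$ then yields that $\Val(C(f)) = \VT(\cj_f)$ depends only on $f^{\trop}$.

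Writing $f_i = \sum_{a \in A_i} c_a^{(i)} z^a$, I would first expand the Jacobian determinant as
\[
\cj_f \;=\; \sum_{b} c_b(f)\, z^b, \qquad c_b(f) \;:=\; \sum_{\substack{(a, a') \in A_1 \times A_2 \\ a + a' = b + (1,1)}} (a_1 a'_2 - a_2 a'_1)\, c_a^{(1)} c_{a'}^{(2)},
\]
where each $c_b$ is a polynomial in the coefficients of $f$ with integer coefficients, and let $B := \{b : c_b \not\equiv 0\}$, a finite set depending only on the pair $A$. For each $b \in B$, introduce the tropical polynomial
\[
M_b(v) \;:=\; \max_{\substack{(a, a') \,:\, a + a' = b + (1,1) \\ a_1 a'_2 \neq a_2 a'_1}} \bigl(v_a^{(1)} + v_{a'}^{(2)}\bigr), \qquad v \in \R^{|A_1| + |A_2|}.
\]
Any $f \in \bm{T}$ satisfies $\val c_b(f) \leq M_b(\Val f)$, with equality whenever the maximum in $M_b(\Val f)$ is attained at a unique pair $(a, a')$; this last condition depends only on $f^{\trop}$.

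The obstruction to equality lies on the locus $H := \bigcup_{b \in B} \VT(M_b) \subset \R^{|A_1| + |A_2|}$, the set of points where some $M_b$ fails to attain its maximum at a single pair. Each $M_b$ is a tropical polynomial with integer slopes, so $H$ is a finite union of rational tropical hypersurfaces, hence itself a tropical hypersurface. Applying the same Kapranov-based lifting used in the proof of Lemma~\ref{lem:gen-valued_transversal}, I would identify
\[
\Omega_2 \;:=\; \bm{T} \cap \Val^{-1}\!\bigl(\R^{|A_1|+|A_2|} \setminus H\bigr)
\]
with the complement of an algebraic subvariety of $\K[A]$, so that $\Omega_2$ is Zariski open.

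Finally, for any $f \in \Omega_2$ the maximum defining $M_b(\Val f)$ is attained at a unique pair for every $b \in B$, yielding $\val c_b(f) = M_b(\Val f)$; for $b \notin B$ the coefficient $c_b(f)$ vanishes trivially. In either case the tropical coefficient at $b$ is determined by $f^{\trop}$, so $(\cj_f)^{\trop}$ is itself determined by $f^{\trop}$. Given $f, g \in \Omega_2$ with $f^{\trop} = g^{\trop}$, one therefore has $(\cj_f)^{\trop} = (\cj_g)^{\trop}$, and Kapranov's theorem (Theorem~\ref{th:Kapranov}) gives $\Val(C(f)) = \VT(\cj_f) = \VT(\cj_g) = \Val(C(g))$. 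The main subtlety is the Kapranov lifting step establishing Zariski openness of $\Omega_2$, handled verbatim as in Lemma~\ref{lem:gen-valued_transversal}; the remainder is routine bookkeeping of the Jacobian expansion together with the standard inequality for valuations of sums.
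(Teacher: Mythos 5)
Your proposal is correct and follows essentially the same route as the paper: there, too, the coefficients of $\cj_f$ are viewed as polynomials in the coefficients of $f$, the set $\Omega_2$ is cut out by requiring the monomial terms of each such coefficient polynomial to have pairwise distinct valuations (your unique-maximizer condition is a harmless weakening of this), so that $\cj_f^{\trop}$ is determined by $f^{\trop}$, and Theorem~\ref{th:Kapranov} then yields $\Val(C(f))=\Val(C(g))$. The Zariski-openness of $\Omega_2$, which you delegate to the lifting argument of Lemma~\ref{lem:gen-valued_transversal}, is likewise dispatched in the paper by a citation (to \cite[Lemma 2.2.12]{MS15}), so your treatment of that step is on the same footing as the original.
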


\begin{proof}
For any $f\in \bm{T}\subset\KW$, the coefficients of $\cj_f$ are polynomials $\phi\in \Z[c_1,\ldots,c_r]$, in the non-zero coefficients of $f_1$ and $f_2$. One can choose values $\tilde{c}_1,\ldots,\tilde{c}_r\in\K\setminus 0$ so that any two terms $\tilde{c}^\alpha,\tilde{c}^\beta$ appearing in $\phi(\tilde{c})$ satisfies $\val(\tilde{c}^\alpha)\neq \Val(\tilde{c}^\beta)$. We denote by $\Omega_2$ the collection of all such choices in $\bm{T}$. Then, for any $c\in\Omega_2$, the valuation of each polynomial $\phi(c)$ depends only on $\Val(c)$. Consequently, for any $c,d\in\Omega_2$ representing two maps $f$ and $g$, if $\Val(c)=\Val(d)$, then 
\[
\cj_f^{\trop} = \cj_g^{\trop}.
\] Finally, $\Omega_2$ being Zariski open follows from~\cite[Lemma 2.2.12]{MS15}.
%Finally, the set $\Val(\Omega_2)$ is the complement of a finite union of hyperplanes in $\Val(\bm{T})$ with integer coefficients. Indeed, notice that it holds
%\[
%\langle\alpha,\Val(c)\rangle = \langle\beta,\Val(c)\rangle,
%\] if and only if two monomials $c^\alpha,c^\beta\in\Z[c]$ have the same valuation. In the same way as in the proof of Lemma~\ref{lem:gen-valued_transversal}, we conclude that $\Omega_2$ is Zariski open in $\K[A]$.
\end{proof}

%%%%%%%%%%%%%%%%%%%%%%%%%%%%%%%%%%tt DO NOT DELETE %%%%%%%%%%%%%%%%%%%%%%%%%%%%%%%%%%%%%%%%%
%
%\begin{proof}
% Assume without loss of generality that $p=(0,0)$, and recall Notation~\ref{not:restrictions}. If $z\in\TK$ is a solution to $g_1=g_2=0$ such that $\Val (z)=(0,0)$, then $\overline{z}\in\TT$ is a solution to $\overline{g_1} = \overline{g_2} = 0$. Then, if $m_0(g)$ and $\overline{m}_0(g)$ denote the number of above solutions $z$, and $\overline{z}$ respectively, we get
%\begin{equation}\label{eq:m_0-lower_bound}
%\overline{m}_0(g)\leq m_0(g).
%\end{equation} 
%On the other hand, it is known (see e.g. ~\cite{osserman2013lifting}) that
%\begin{equation}\label{eq:m_0-upper_bound}
%m_0(g)\leq \MV(\delta(\xi_1),\delta(\xi_2)).
%\end{equation} To finish, the proof follows from Equations~\eqref{eq:m_0-lower_bound} and~\eqref{eq:m_0-upper_bound} if we show that $\overline{g}_{1} = \overline{g}_{2} = 0$ has exactly $\MV(\delta(\xi_1),\delta(\xi_2))$ \emph{distinct} solutions in $(\C^*)^2$. The latter is an easy computation since the stability condition on $p$ implies that for $i=1,2$, the polynomial $\overline{g}_{i}$ is a binomial whose support coincides with the two endpoints of $\delta(\xi_i)$.
%\end{proof}

%%%%%%%%%%%%%%%%%%%%%%%%%%%%%%%%%%%%%%%%%%%%%%%%%%%%%%%%%%%%%%%%%%%%%%%%%%%

Both the below proposition, and the subsequent lemma will be proven in \S\ref{sec:tropical_critical}. 
\begin{proposition}\label{prop:t-crit_mixed-cells}
Let $A$ be a pair of finite subsets of $\N^2\setminus\{(0,0)\}$. There is a Zariski open $\Omega\subset \K[A]$, contained in the intersection $\Omega_1\cap\Omega_2$ (see Lemmas~\ref{lem:gen-valued_transversal} and~\ref{lem:crit-points-invar}), for which every polynomial map $f\in\Omega$ satisfies the following. If $F:=f^{\trop}$, and $\Xi$ is the subdivision of $\R^2$ induced by $F$, then for any $\xi\in\Xi$, it holds

\begin{enumerate}

	\item the set $\Phi_F(\xi)$ is empty if and only if $\xi\cap C(F)$ is empty,
	
	\item if $\xi$ satisfies Definition~\ref{def:main}:~\ref{it:1321}, then $\xi\cap C(F)$ is bounded, 
	
	\item if $\dim \xi =2$, then $y\in \Phi_F(\xi)$ if and only if
			\[
				F^{-1}(y)\cap\xi\cap C(F)\neq\emptyset.
			\]	

\end{enumerate}
\end{proposition}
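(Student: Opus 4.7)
The argument is a case-by-case analysis built on the Jacobian polynomial $\cj_f=\partial_1 f_1\,\partial_2 f_2-\partial_2 f_1\,\partial_1 f_2$, whose tropical curve equals $C(F)$ and whose tropicalisation depends only on $F$ once $f\in\Omega_2$ (Lemma~\ref{lem:crit-points-invar}). The set $\Omega$ will be defined as $\Omega_1\cap\Omega_2\cap\Omega_3$, where $\Omega_3$ is a Zariski open set cut out by finitely many polynomial non-vanishing conditions on the coefficients of $f$; these conditions amount to forbidding accidental cancellations in the subleading Puiseux expansions of $\cj_f$ that appear below. By Kapranov's theorem (Theorem~\ref{th:Kapranov}), $\xi\cap C(F)\neq\emptyset$ if and only if the initial form of $\cj_f$ at any interior point of $\xi$ contains at least two monomials; and for a $2$-cell $\xi$, item~(3) reduces to identifying which tropical level sets of $F|_\xi$ meet $\VT(\cj_f)$.

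\textbf{Central computation.} On a cell $\xi=\xi_1\cap\xi_2$, the initial form of $f_i$ is supported on $\delta(\xi_i)\cap\N^2$, and the contribution of a monomial pair $(z^a,z^b)$ with $a\in\delta(\xi_1)$ and $b\in\delta(\xi_2)$ to the initial Jacobian at $\xi$ equals $\det(a,b)\,\overline{c}_{1,a}\,\overline{c}_{2,b}\,z^{a+b-(1,1)}$, where $\det(a,b)$ denotes the $2\times 2$ minor. This minor vanishes precisely when $(0,0),a,b$ are collinear, i.e. when the pair is relevant in the sense of Definition~\ref{def:lat-diag}. All cases of Definition~\ref{def:main} split along this dichotomy. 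For a $2$-cell $\xi$ with $\delta(\xi_i)=\{a_i\}$, irrelevance gives a single nonzero monomial, so $\xi\cap C(F)=\emptyset$; moreover the linear parts of $F_1,F_2$ on $\xi$ are $a_1,a_2$, linearly independent, so $F|_\xi$ is an affine injection and the three items of the proposition hold trivially, matching $\Phi_F(\xi)=\emptyset$ in case~\ref{it:21}. When $\xi$ is relevant, the leading determinant vanishes and one expands $\cj_f$ to next order by pairing the leading monomial of one $f_i$ with a subleading monomial $z^b$ of the other $f_j$; each such term contributes with coefficient a minor $\det(a_i,b)$ that is nonzero exactly when $b$ departs the collinear direction. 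For $\xi$ diagonal relevant, generically the resulting initial Jacobian is a polynomial whose tropical curve crosses $\xi$ transversally to the direction of $F(\xi)$, so $F$ sends $C(F)\cap\xi$ onto $F(\xi)$, giving~(3) and $\Phi_F(\xi)=F(\xi)$. For $\xi$ lateral relevant, only lattice points $b$ corresponding to essential edges of $\tau_i$ at $a_i$ (Definition~\ref{def:essential}) yield nonvanishing minors, and the initial Jacobian's tropical curve inside $\xi$ consists precisely of the essential $1$-cells, reproducing $\Phi_F(\xi)=\bigcup_{\gamma\text{ essential}} F(\gamma)$ in case~\ref{it:222}.

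\textbf{Lower-dimensional cells, boundedness, and the main obstacle.} On a $1$-cell $\xi$ contained in an edge of, say, $T_1$, $\delta(\xi_1)$ is an edge $[a,a']$ and the leading initial Jacobian is a binomial with coefficients proportional to $\det(a,a_2)$ and $\det(a',a_2)$; the number of nonvanishing terms equals the number of irrelevant $2$-cells adjacent to $\xi$, distinguishing cases~\ref{it:11} (two nonvanishing: binomial, $\xi\cap C(F)\neq\emptyset$) from~\ref{it:12} (one nonvanishing: monomial, $\xi\cap C(F)=\emptyset$). When both adjacent $2$-cells are relevant (cases~\ref{it:131},~\ref{it:1321}), the leading coefficients vanish and the same subleading expansion as in the relevant $2$-cell case yields item~(1); item~(2) then follows because in case~\ref{it:1321} the essentiality of both adjacent $1$-cells to the lateral neighbours forces the Newton polytope of the surviving subleading polynomial to project onto a bounded interval along the direction of $\xi$, so $C(F)\cap\xi$ is a compact segment. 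Case~\ref{it:1322} is obtained by propagation from the endpoint $\zeta$ of $\xi$, whose analysis falls into the $0$-cell cases~\ref{it:011}--\ref{it:022}; there the initial Jacobian has trinomial support and one counts nonvanishing minors analogously. The main technical obstacle is the combinatorial bookkeeping in the relevant regime: one must enumerate all pairs $(a,b)\in A_1\times A_2$ whose subleading contributions reach the initial Jacobian at $\xi$, track their minor signs, and control them using the non-vanishing conditions defining $\Omega_3$. These repetitive subcases are exactly those the paper defers to the appendix.
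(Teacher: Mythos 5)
Your overall strategy is the same as the paper's: compute the initial form of $\cj_f$ cell by cell (this is Lemma~\ref{lem:Jacobian_at_origin2}), decide membership in $C(F)$ by the two-monomial criterion (Corollary~\ref{cor:lower-order}), and pass to subleading terms, with an auxiliary open set $\Omega_3$, exactly in the cases where the leading $2\times 2$ minors vanish; your treatment of irrelevant $2$-cells, of cases~\ref{it:11} and~\ref{it:12}, and of the $0$-cells matches the paper. The gaps are at the three places where the real work happens. For a diagonal relevant $2$-cell (case~\ref{it:221}) you assert that the subleading initial Jacobian has a tropical curve that ``crosses $\xi$ transversally'', hence that $F$ maps $C(F)\cap\xi$ onto $F(\xi)$. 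Transversality to the fibres does not give surjectivity: a curve transverse to the fibres can still meet only some of the fibre segments. The paper fixes a fibre $L_y=F^{-1}(y)\cap\xi$, notes that its two endpoints lie on $1$-cells already settled, where $\cj_f^{\trop}$ attains its maximum at a single and provably \emph{different} monomial at each endpoint, and concludes by continuity of the piecewise-linear convex function that a tie, i.e.\ a point of $C(F)$, occurs inside every fibre; some boundary-to-boundary argument of this kind is indispensable and is missing from your sketch. For a lateral $2$-cell (case~\ref{it:222}) your claim that ``the initial Jacobian's tropical curve inside $\xi$ consists precisely of the essential $1$-cells'' is false as stated: the essential $1$-cells lie in the boundary of $\xi$, while $C(F)\cap\xi$ is in general a different curve in the interior. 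What has to be proved (and what the paper proves by rescaling $z_2\mapsto t^{\lambda}z_2$ and comparing the height of the surviving monomial with $M_0$, the minimal second coordinate in $A_1\cup A_2$) is that the fibre $F^{-1}(y)\cap\xi$, a vertical ray whose endpoint sits on a boundary $1$-cell $\gamma$, meets $C(F)$ if and only if $\gamma$ is essential; only then does $F(C(F)\cap\xi)=\bigcup_{\gamma\ \mathrm{essential}}F(\gamma)$ follow.

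Your justification of item (2) is also not correct: essentiality of $\gamma$ and $\gamma'$ does not by itself bound $\xi\cap C(F)$. An unbounded component along the ray $\xi$ arises when two monomials of $\cj_f$ of the same height tie at every depth, and this is a coincidence among the valuations of the coefficients of $f$; it is not excluded by the shape or projection of the Newton polytope of the surviving subleading polynomial, since same-height pairs of monomials are generically present in the support of $\cj_f$. This is precisely why the paper's $\Omega_3$ is introduced: it excludes finitely many linear relations among the valuations (for instance $k=l$ together with an equality of valuations of products of coefficients), and boundedness in case~\ref{it:1321} is deduced only after this exclusion. Your $\Omega_3$, described as coefficient non-vanishing (anti-cancellation) conditions, does not visibly contain these valuation conditions, so as written the proof of item (2) is incomplete.
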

\begin{remark}\label{rem:3->1}
Concerning the above Proposition, notice that for $2$-cells, it holds (3) $\Rightarrow$ (1). The converse may fail whenever $\xi$ is as in Definition~\ref{def:main}:~\ref{it:222} where $F(\xi)\neq\Phi_F(\xi)$ (see e.g., Figure~\ref{fig:lateral_cell}). 
\end{remark}

\subsection{Images of tropical critical points}\label{subs:images_tropical}  In the rest of this paper, we are interested in the set of polynomial maps $\Omega\subset\K[A]$ introduced in Proposition~\ref{prop:t-crit_mixed-cells}. Let $f$ be any such map, let $F:\R^2\to\R^2$ denote the tropical polynomial map $f^{\trop}$. Thanks to Lemma~\ref{lem:crit-points-invar} we will write $C(F)$ instead of $\Val(C(f))$. 

%Let .
\begin{lemma}[Image of the critical set]\label{lem:val-F_F-val}
For any point $x\in C(F)$, there exists a point $z\in C(f)$ satisfying $\Val(z) = x$ and
\begin{equation}\label{eq:lem:val-F_F-val}
F(x)= \Val (f(z)).
\end{equation}
\end{lemma}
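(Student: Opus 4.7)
The argument proceeds by working with the \emph{initial form} at $x$: for any polynomial $g = \sum_{a} c_a z^a$, put $\mathcal{I}_x(g) = \{a\in A:\, \langle x,a\rangle + \val(c_a) = g^{\trop}(x)\}$ and define $g^{(x)}(z) := \sum_{a \in \mathcal{I}_x(g)} \overline{c}_a z^a \in \C[z_1,z_2]$. The standard fact to exploit is that for any $z \in \TK$ with $\Val(z) = x$ and leading-coefficient vector $\overline{z}\in (\C\setminus 0)^2$,
\[
\val(g(z)) \leq g^{\trop}(x), \qquad \text{with equality} \iff g^{(x)}(\overline{z}) \neq 0 .
\]
Conversely, the point $x$ lies in $\VT(g)$ precisely when $g^{(x)}$ contains at least two monomial terms (Corollary~\ref{cor:lower-order} applied after a monomial coordinate change that sends $x$ to the origin).

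The plan is as follows. Since $f\in\Omega\subset\Omega_2$, Lemma~\ref{lem:crit-points-invar} gives $C(F)=\Val(C(f))$, so starting from $x\in C(F)$ the initial form $\cj_f^{(x)}$ already defines a non-empty curve in $(\C\setminus 0)^2$ by the observation above. I would then produce $\overline{z} \in (\C\setminus 0)^2$ satisfying
\[
\cj_f^{(x)}(\overline{z}) = 0, \qquad f_1^{(x)}(\overline{z}) \neq 0, \qquad f_2^{(x)}(\overline{z}) \neq 0,
\]
and lift $\overline{z}$ to $z \in C(f)$ with $\Val(z)=x$ and prescribed leading coefficients $\overline{z}$. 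The lifting is standard surjectivity of tropicalisation on hypersurfaces over $\K$ applied to $\cj_f$ (Kapranov's Theorem~\ref{th:Kapranov}, with refinement to the initial subvariety). For such a $z$ the displayed equivalence above yields $\val(f_i(z)) = F_i(x)$ for $i=1,2$, which is the desired equality $F(x) = \Val(f(z))$.

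To produce $\overline{z}$ I would argue that non-containment of the curve $\{\cj_f^{(x)}=0\}\subset(\C\setminus 0)^2$ in the union $\{f_1^{(x)} f_2^{(x)}=0\}$ is a codimension-one algebraic condition on the tuple of leading coefficients $(\overline{c}_a)$ of $f_1,f_2$. Since the subdivision $\Xi$ has finitely many cells and the initial forms $\cj_f^{(x)},\, f_i^{(x)}$ depend only on the cell $\xi\ni x$, only finitely many non-containment conditions are imposed; enforcing each of them cuts out a Zariski open subset of $\K[A]$, and the intersection with $\Omega$ remains Zariski open. I would consequently shrink $\Omega$ once more.

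The main obstacle will be handling the cases where $x$ lies in the relative interior of a positive-dimensional cell of $\Xi$, so that $\cj_f^{(x)}$ is a monomial and the curve $\{\cj_f^{(x)}=0\}$ in $(\C\setminus 0)^2$ is empty. In such cases the membership $x \in C(F)$ is not detected by Kapranov applied pointwise, but by the cell-level structure encoded in Proposition~\ref{prop:t-crit_mixed-cells}; the correct substitute is then to use an adjacent vertex of $C(F)$ on the closure of $\xi$, perform the construction there, and propagate $z$ along the critical curve $C(f)$ using that $f$ is dominant. Stratifying by the type of the cell containing $x$, exactly as in Definition~\ref{def:main} and Proposition~\ref{prop:t-crit_mixed-cells}, should reduce the problem to a uniform check in each stratum.
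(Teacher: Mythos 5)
Your main line is the same as the paper's: reduce the lemma to producing $z\in C(f)$ with $\Val(z)=x$ whose leading-term vector $\overline{z}\in(\C^*)^2$ avoids the zero sets of the initial forms of $f_1,f_2$, since then $\overline{f(z)}=\overline{f}(\overline{z})\in(\C^*)^2$ gives $\Val(f(z))=F(x)$; the lifting of a root of the initial form of $\cj_f$ is the same (refined Kapranov) step the paper uses implicitly. The difference, and the gap, is how the required $\overline{z}$ is obtained. You dispose of the crucial step by declaring that non-containment of $\{\cj_f^{(x)}=0\}$ in $\{f_1^{(x)}f_2^{(x)}=0\}$ is a ``codimension-one'' condition on leading coefficients and shrinking $\Omega$ once more. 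But to shrink you must know the bad locus is a \emph{proper} subvariety, i.e.\ that for the given supports there is at least one choice of coefficients for which the containment fails; this is not automatic, because the coefficients of $\cj_f$ are polynomials in those of $f$, so for a fixed combinatorial type the containment could in principle be a polynomial identity (e.g.\ $\overline{\cj}_f$ a monomial multiple of $\overline{f}_i$ for \emph{all} coefficient choices), in which case no Zariski-open shrinkage rescues the statement. Ruling this out is precisely the content of the paper's proof: it checks, cell type by cell type (using the explicit computations of $\overline{\cj}_f$ in \S\ref{sec:appendix_some_proofs}), that either $\overline{f}_i$ is a monomial or it does not divide $\overline{\cj}_f$. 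Your proposal omits this verification, so the central step is unproven; note also that the lemma is asserted for the specific $\Omega$ of Proposition~\ref{prop:t-crit_mixed-cells}, so any further shrinking would have to be threaded back through the proof of Theorem~\ref{th:main-concise}.

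The paragraph on the ``main obstacle'' rests on a misconception. The initial form $\cj_f^{(x)}$ is not determined by the cell of $\Xi$ containing $x$ (it is governed by the corner locus of $\cj_f^{\trop}$, i.e.\ by $C(F)$ itself, which subdivides cells of $\Xi$ further, as in the lateral-cell picture). Since $C(F)=\Val(C(f))=\VT(\cj_f)$ for $f\in\Omega_2$ (Lemma~\ref{lem:crit-points-invar}), the hypothesis $x\in C(F)$ already forces $\cj_f^{(x)}$ to have at least two monomial terms by Corollary~\ref{cor:lower-order}, whatever the dimension of the cell of $\Xi$ containing $x$; the case you flag as problematic never occurs, and when $x$ lies in the interior of a $2$-cell the lemma is in fact the easiest case, because there $\overline{f}_1,\overline{f}_2$ are monomials and impose no condition on $\overline{z}$. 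The genuinely delicate cases are $x\in T_1\cup T_2$ (the $0$- and $1$-cells of Definition~\ref{def:main}), which your plan handles only through the unjustified genericity claim; and the proposed fallback of passing to a nearby vertex of $C(F)$ and ``propagating $z$ along $C(f)$ using dominance'' is not an argument — moving along $C(f)$ gives no control on $\Val(z)$ or on $\val(f_i(z))$ at the point you need.
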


\begin{definition}[Super critical cells]\label{def:super-crit}
Let $\Xi$ denote the subdivision of $\R^2$ induced by $F$. A cell $\xi\in\Xi$ is called \emph{super critical for $T_1$} (resp. for $T_2$),  if $F(\xi)$ is a point, and
\[
\Raw(F(\xi)) \subset\Phi_F(\xi)\text{ (resp.} \Ras(F(\xi)) \subset\Phi_F(\xi)\text{)}. 
\] That is, in the notations of Definition~\ref{def:main}, the cell $\xi$ satisfies one of the conditions~\ref{it:01}, ~\ref{it:021}, ~\ref{it:131}, ~\ref{it:1321} or~\ref{it:1322} where in the latter case, the $0$-cell $\zeta$ and $T_1$ satisfy~\ref{it:011}.
\end{definition} 

\begin{notation}
For any value $c\in\K\setminus 0$, we use $T_i(\val(c))$ ($i=1,2$), to denote the tropical curve in $\R^2$ defined as $\Val \VT(f_i-c)$. For any pair $w:=(w_1,w_2)\in(\K\setminus 0)^2$, the notation $T(\Val(w))$ refers to the intersection $T_1(\val(w_1))\cap T_2(\val(w_2))$. Note that, since $f_i(0,0) = 0$ ($i=1,2$), the tropical curve $T_i(\val(w_i))$ depends only on $F_i$ and $\val(w_i)$. Hence $T_1(y_1)$ and $T_2(y_2)$ are well-defined for any $(y_1,y_2)\in\R^2$.
\end{notation}

A common feature about super-critical cells $\xi\in\Xi$ is that there are infinitely-many $y\in\R^2$ for which $T(y)$ has an unstable intersection at a fixed point in $\overline{\xi}$.

\begin{example}\label{ex:map_super-crit}
Let $f$ be the map $z\mapsto (z_1+z_2,~z_2 +z_1z_2)$. Then, its tropicalization $F$ is the map
\[
(x_1,x_2)\mapsto \big(\max(x_1,~x_2),~\max( x_2,~ x_1+x_2)\big),
\] where the subdivision $\Xi$ (see Figure~\ref{fig:supercrit}) contains the cell $\{(0,0)\}$ that is supercritical to each of $T_1$ and $T_2$. One can check that for any point $y$ in the union of the two half-lines $\{(\lambda,0)~|~\lambda<0\}\cup \{(0,\lambda)~|~\lambda<0\}\subset\R^2$, the intersection $T(y)$ is unstable and contains $(0,0)$.
\end{example}

\begin{figure}

\tikzset{every picture/.style={line width=0.75pt}} %set default line width to 0.75pt        

\begin{tikzpicture}[x=1.2pt,y=1.2pt,yscale=-1,xscale=1]
%uncomment if require: \path (0,292); %set diagram left start at 0, and has height of 292

%Straight Lines [id:da9572463242028845] 
\draw [color={rgb, 255:red, 0; green, 0; blue, 0 }  ,draw opacity=0.3 ][fill={rgb, 255:red, 74; green, 144; blue, 226 }  ,fill opacity=1 ][line width=0.75]  [dash pattern={on 3pt off 0.75pt}]  (200.36,113.5) -- (200.36,88.5) ;
%Straight Lines [id:da42606175635749466] 
\draw [color={rgb, 255:red, 74; green, 144; blue, 226 }  ,draw opacity=0.3 ][fill={rgb, 255:red, 74; green, 144; blue, 226 }  ,fill opacity=1 ][line width=0.75]    (244.36,113.06) -- (269.36,88.31) ;
%Straight Lines [id:da9906001911345151] 
\draw [color={rgb, 255:red, 0; green, 0; blue, 0 }  ,draw opacity=1 ][fill={rgb, 255:red, 74; green, 144; blue, 226 }  ,fill opacity=1 ][line width=0.75]  [dash pattern={on 3pt off 0.75pt}]  (175.88,88.5) -- (200.36,88.5) ;
%Straight Lines [id:da6644119325987274] 
\draw [color={rgb, 255:red, 0; green, 0; blue, 0 }  ,draw opacity=1 ][fill={rgb, 255:red, 74; green, 144; blue, 226 }  ,fill opacity=1 ][line width=0.75]  [dash pattern={on 3pt off 0.75pt}]  (200.36,88.5) -- (217.84,105.98) ;
%Straight Lines [id:da37128190697168184] 
\draw [color={rgb, 255:red, 74; green, 144; blue, 226 }  ,draw opacity=1 ][fill={rgb, 255:red, 74; green, 144; blue, 226 }  ,fill opacity=1 ][line width=0.75]    (190.95,97.97) -- (225.36,63.5) ;
%Straight Lines [id:da9868399942959677] 
\draw [color={rgb, 255:red, 0; green, 0; blue, 0 }  ,draw opacity=1 ][fill={rgb, 255:red, 74; green, 144; blue, 226 }  ,fill opacity=1 ][line width=0.75]  [dash pattern={on 3pt off 0.75pt}]  (200.36,88.5) -- (200.36,63.5) ;
%Shape: Circle [id:dp9025005904871563] 
\draw  [color={rgb, 255:red, 0; green, 0; blue, 0 }  ,draw opacity=1 ][fill={rgb, 255:red, 255; green, 255; blue, 255 }  ,fill opacity=1 ] (198.37,88.5) .. controls (198.37,87.4) and (199.26,86.51) .. (200.36,86.51) .. controls (201.46,86.51) and (202.35,87.4) .. (202.35,88.5) .. controls (202.35,89.6) and (201.46,90.49) .. (200.36,90.49) .. controls (199.26,90.49) and (198.37,89.6) .. (198.37,88.5) -- cycle ;
%Shape: Polygon [id:ds9125338200897587] 
\draw  [color={rgb, 255:red, 248; green, 231; blue, 28 }  ,draw opacity=0 ][fill={rgb, 255:red, 248; green, 231; blue, 28 }  ,fill opacity=0.3 ] (136.03,63.5) -- (135.78,88.75) -- (111.03,113.5) -- cycle ;
%Straight Lines [id:da47134479034366084] 
\draw [color={rgb, 255:red, 74; green, 144; blue, 226 }  ,draw opacity=1 ][fill={rgb, 255:red, 74; green, 144; blue, 226 }  ,fill opacity=1 ][line width=0.75]    (190.95,113.72) -- (190.95,97.97) ;
%Straight Lines [id:da9429134972800993] 
\draw [color={rgb, 255:red, 74; green, 144; blue, 226 }  ,draw opacity=1 ][fill={rgb, 255:red, 74; green, 144; blue, 226 }  ,fill opacity=1 ][line width=0.75]    (111.03,113.5) -- (161.03,63.5) ;
%Straight Lines [id:da8592541001574142] 
\draw [color={rgb, 255:red, 0; green, 0; blue, 0 }  ,draw opacity=1 ][fill={rgb, 255:red, 74; green, 144; blue, 226 }  ,fill opacity=1 ][line width=0.75]  [dash pattern={on 3pt off 0.75pt}]  (136.03,113.5) -- (136.03,63.5) ;
%Shape: Circle [id:dp841810238638415] 
\draw  [color={rgb, 255:red, 0; green, 0; blue, 0 }  ,draw opacity=1 ][fill={rgb, 255:red, 255; green, 255; blue, 255 }  ,fill opacity=1 ] (134.04,88.5) .. controls (134.04,87.4) and (134.93,86.51) .. (136.03,86.51) .. controls (137.12,86.51) and (138.01,87.4) .. (138.01,88.5) .. controls (138.01,89.6) and (137.12,90.49) .. (136.03,90.49) .. controls (134.93,90.49) and (134.04,89.6) .. (134.04,88.5) -- cycle ;
%Straight Lines [id:da723797624411269] 
\draw [color={rgb, 255:red, 74; green, 144; blue, 226 }  ,draw opacity=1 ][fill={rgb, 255:red, 74; green, 144; blue, 226 }  ,fill opacity=1 ][line width=0.75]    (175.2,97.97) -- (190.95,97.97) ;
%Straight Lines [id:da9497333437178498] 
\draw [color={rgb, 255:red, 0; green, 0; blue, 0 }  ,draw opacity=1 ][fill={rgb, 255:red, 74; green, 144; blue, 226 }  ,fill opacity=1 ][line width=0.75]  [dash pattern={on 3pt off 0.75pt}]  (244.88,101.43) -- (269.36,101.43) ;
%Straight Lines [id:da6631634631017674] 
\draw [color={rgb, 255:red, 0; green, 0; blue, 0 }  ,draw opacity=1 ][fill={rgb, 255:red, 74; green, 144; blue, 226 }  ,fill opacity=1 ][line width=0.75]  [dash pattern={on 3pt off 0.75pt}]  (269.36,101.43) -- (281.56,113.64) ;
%Straight Lines [id:da4329841685094591] 
\draw [color={rgb, 255:red, 74; green, 144; blue, 226 }  ,draw opacity=1 ][fill={rgb, 255:red, 74; green, 144; blue, 226 }  ,fill opacity=1 ][line width=0.75]    (269.36,88.31) -- (294.36,63.56) ;
%Straight Lines [id:da3529337726768499] 
\draw [color={rgb, 255:red, 0; green, 0; blue, 0 }  ,draw opacity=1 ][fill={rgb, 255:red, 74; green, 144; blue, 226 }  ,fill opacity=1 ][line width=0.75]  [dash pattern={on 3pt off 0.75pt}]  (269.36,88.56) -- (269.36,63.56) ;
%Straight Lines [id:da35134034829511196] 
\draw [color={rgb, 255:red, 74; green, 144; blue, 226 }  ,draw opacity=1 ][fill={rgb, 255:red, 74; green, 144; blue, 226 }  ,fill opacity=1 ][line width=0.75]    (269.36,114.31) -- (269.36,88.56) ;
%Straight Lines [id:da07426068877590908] 
\draw [color={rgb, 255:red, 74; green, 144; blue, 226 }  ,draw opacity=1 ][fill={rgb, 255:red, 74; green, 144; blue, 226 }  ,fill opacity=1 ][line width=0.75]    (243.61,88.31) -- (269.36,88.31) ;
%Straight Lines [id:da8448564879163899] 
\draw [color={rgb, 255:red, 74; green, 144; blue, 226 }  ,draw opacity=0.3 ][fill={rgb, 255:red, 74; green, 144; blue, 226 }  ,fill opacity=1 ][line width=0.75]    (175.36,113.5) -- (190.95,97.97) ;
%Straight Lines [id:da4389497398763025] 
\draw [color={rgb, 255:red, 65; green, 117; blue, 5 }  ,draw opacity=1 ]   (367.61,63.3) -- (342.6,88.31) ;
%Straight Lines [id:da16979734102839938] 
\draw [color={rgb, 255:red, 65; green, 117; blue, 5 }  ,draw opacity=1 ]   (342.6,88.31) -- (342.6,113.32) ;
%Straight Lines [id:da8164235292154868] 
\draw [color={rgb, 255:red, 65; green, 117; blue, 5 }  ,draw opacity=1 ]   (317.59,88.31) -- (342.6,88.31) ;
%Shape: Circle [id:dp8643421838392921] 
\draw  [color={rgb, 255:red, 0; green, 0; blue, 0 }  ,draw opacity=1 ][fill={rgb, 255:red, 255; green, 255; blue, 255 }  ,fill opacity=1 ] (340.61,88.31) .. controls (340.61,87.21) and (341.5,86.32) .. (342.6,86.32) .. controls (343.7,86.32) and (344.59,87.21) .. (344.59,88.31) .. controls (344.59,89.41) and (343.7,90.3) .. (342.6,90.3) .. controls (341.5,90.3) and (340.61,89.41) .. (340.61,88.31) -- cycle ;
%Shape: Circle [id:dp8953984719579227] 
\draw  [color={rgb, 255:red, 65; green, 117; blue, 5 }  ,draw opacity=1 ][fill={rgb, 255:red, 65; green, 117; blue, 5 }  ,fill opacity=1 ] (329.12,88.31) .. controls (329.12,87.77) and (329.56,87.33) .. (330.1,87.33) .. controls (330.63,87.33) and (331.07,87.77) .. (331.07,88.31) .. controls (331.07,88.85) and (330.63,89.29) .. (330.1,89.29) .. controls (329.56,89.29) and (329.12,88.85) .. (329.12,88.31) -- cycle ;
%Shape: Circle [id:dp2896022145783361] 
\draw  [color={rgb, 255:red, 65; green, 117; blue, 5 }  ,draw opacity=1 ][fill={rgb, 255:red, 65; green, 117; blue, 5 }  ,fill opacity=1 ] (341.62,100.81) .. controls (341.62,100.27) and (342.06,99.84) .. (342.6,99.84) .. controls (343.14,99.84) and (343.58,100.27) .. (343.58,100.81) .. controls (343.58,101.35) and (343.14,101.79) .. (342.6,101.79) .. controls (342.06,101.79) and (341.62,101.35) .. (341.62,100.81) -- cycle ;
%Straight Lines [id:da9445982215069675] 
\draw [color={rgb, 255:red, 139; green, 87; blue, 42 }  ,draw opacity=1 ][line width=1.5]    (269.36,88.31) -- (269.36,101.43) ;
%Shape: Circle [id:dp616637770728899] 
\draw  [color={rgb, 255:red, 0; green, 0; blue, 0 }  ,draw opacity=1 ][fill={rgb, 255:red, 255; green, 255; blue, 255 }  ,fill opacity=1 ] (267.37,88.31) .. controls (267.37,87.21) and (268.26,86.32) .. (269.36,86.32) .. controls (270.46,86.32) and (271.35,87.21) .. (271.35,88.31) .. controls (271.35,89.41) and (270.46,90.3) .. (269.36,90.3) .. controls (268.26,90.3) and (267.37,89.41) .. (267.37,88.31) -- cycle ;

% Text Node
\draw (327.99,79.82) node [anchor=south west] [inner sep=0.75pt]  [font=\tiny]  {$p$};
% Text Node
\draw (347.56,101.59) node [anchor=south west] [inner sep=0.75pt]  [font=\tiny]  {$q$};
% Text Node
\draw (325.72,134.09) node [anchor=south west] [inner sep=0.75pt]  [font=\tiny]  {$\Val( D( f))$};
% Text Node
\draw (245.25,135.43) node [anchor=south west] [inner sep=0.75pt]  [font=\tiny]  {$T_{1}( q) \cup T_{2}( q)$};
% Text Node
\draw (176.94,135.37) node [anchor=south west] [inner sep=0.75pt]  [font=\tiny]  {$T_{1}( p) \cup T_{2}( p)$};
% Text Node
\draw (136.05,130.7) node [anchor=north] [inner sep=0.75pt]  [font=\tiny]  {$T_{1} \cup T_{2}$};
% Text Node
\draw (140.01,91.9) node [anchor=north west][inner sep=0.75pt]  [font=\tiny]  {$( 0,0)$};

\end{tikzpicture}
\caption{ The tropical curves corresponding to Example~\ref{ex:map_super-crit}}\label{fig:supercrit}
\end{figure}
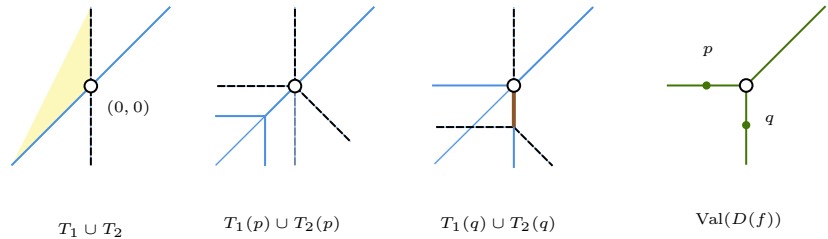

%\begin{remark}\label{rem:super_critical_a}
%
%\end{remark}
 
\begin{proposition}\label{prop:super-critical-mixed-cells}
We retain the notations of Proposition~\ref{prop:t-crit_mixed-cells}. Let $f\in\Omega$, and let $i,j\in\{1,2\}$ be be two distinct indexes. Then, for any $\xi\in\Xi$, the following statements hold true.
\begin{itemize}
	\item[\textbf{a)}] If $\xi$ is super-critical to $T_i$, then for any $(y_1,y_2)\in\R^2$ with $y_i\leq F_i(\xi)$, $y_j = F_j(\xi)$, there exists $(w_1,w_2)\in\TK$ satisfying $\Val(w_1,w_2)=(y_1,y_2)$, and the system 
 \begin{equation}\label{eq:sys:main-half}
  \left\{
  \begin{array}{@{}ccccc@{}}
   f_1 - w_1 & = & 0,\\
   f_2 - w_2 & = & 0,
  \end{array}
  \right.
\end{equation} has a degenerate solution in $\TK$ with valuation in $\overline{\xi}$.

	\item[\textbf{b)}] If $\xi$ is not super-critical with respect to $T_i$, then for any $x\in\xi$ and any $(w_1,w_2)\in\TK$ with $\val(w_i)\neq F_i(x)$, the system~\eqref{eq:sys:main-half} has no degenerate solutions with valuation $x$.
\end{itemize}  
\end{proposition}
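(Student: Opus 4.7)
The overall plan is to recast both statements in terms of \emph{unstable tropical intersections}: by the contrapositive of (the proof of) Lemma~\ref{lem:transversal}, a degenerate solution to~\eqref{eq:sys:main-half} with valuation $x$ forces the intersection $T_1(\val w_1)\cap T_2(\val w_2)$ to be non-stable at $x$; and, conversely, the argument of~\cite[Lemmas 3.1 and 5.2]{DiTa12} produces a degenerate solution from any such non-stable intersection whose lower-order restrictions admit a common zero. So the whole proposition reduces to a careful analysis of how the pair of tropical curves $T_1(y_1), T_2(y_2)$ deforms as $y=(y_1,y_2)$ varies in $\R^2$, and at which $(x,y)$ the intersection of these two deformed curves fails to be stable at $x$. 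Recall that $x\in T_i(y_i)$ iff $y_i\le F_i(x)$ (with equality forcing $y_i$ to tie with a monomial of $f_i^{\trop}$ at $x$), which already pins down the admissible ranges of $y_i$ relative to $F_i(x)$.

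For part (a), I would fix $i=1$ and $j=2$ (the case $i=2$ is symmetric) and split according to the combinatorial type of the super-critical cell $\xi$ listed in Definition~\ref{def:super-crit}: $\xi$ is a transverse intersection point of two edges (case~\ref{it:011}), a trivalent vertex of $T_i$ (case~\ref{it:021}), a $1$-cell adjacent to two relevant diagonal $2$-cells (case~\ref{it:131}), or a $1$-cell adjacent to two essential lateral configurations (cases~\ref{it:1321}/\ref{it:1322}). In every case, since $F(\xi)$ is a point $p=(p_1,p_2)$ and $\Raw(p)\subset\Phi_F(\xi)$, one can write down explicitly the dominant monomials of $f_1$ and $f_2$ at any $x\in\overline\xi$ and observe that the restriction $\overline{f_1-w_1}$ acquires a new dominant term equal to $-\overline{w_1}$ whenever $\val(w_1)=y_1\le p_1$, while $\overline{f_2-w_2}$ is unchanged for $y_2=p_2$. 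A direct construction then produces coefficients $\overline{w_1},\overline{w_2}\in\C^*$ for which the classical system $\overline{f_1-w_1}=\overline{f_2-w_2}=0$ has a solution in $(\C^*)^2$ of multiplicity at least two; lifting this singular solution with Hensel's lemma (as in~\cite[\S5]{DiTa12}) gives a degenerate root of~\eqref{eq:sys:main-half} with valuation $x\in\overline\xi$.

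For part (b), the plan is to contrapose: if $x\in\xi$, $(w_1,w_2)\in\TK$ satisfies $\val(w_i)\neq F_i(x)$, and~\eqref{eq:sys:main-half} admits a degenerate solution of valuation $x$, then the pair $T_1(\val w_1), T_2(\val w_2)$ is non-stable at $x$, and a case-by-case inspection of all cells in Definition~\ref{def:main} that fail the super-critical condition shows that the corresponding non-stability forces $\xi$ to be super-critical to $T_i$. Concretely, for $\val(w_i)>F_i(x)$ the point $x$ lies outside $T_i(\val w_i)$ so no intersection occurs; for $\val(w_i)<F_i(x)$, the dual polytope of $x$ in $\overline{f_i-w_i}$ is forced to coincide with $\delta(\xi_i)$ translated to include $(0,0)$, and the condition that this polytope together with $\delta(\xi_j)$ produces a non-unimodular mixed volume is exactly the essentiality/collinearity encoded by Definition~\ref{def:super-crit}.

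The principal obstacle is the bookkeeping in the case analysis: each of the seven situations~\textbf{i}--\textbf{vii} in Figure~\ref{fig:situations-table} contributing to~\ref{it:01}, ~\ref{it:021}, ~\ref{it:131}, ~\ref{it:1321}, and~\ref{it:1322} has to be inspected separately in part (a), and in part (b) one must verify the converse across every non-super-critical configuration. I expect most of these verifications to be mechanical translations between the duality from Proposition~\ref{prop:mixed_subd} and the singularity test of~\cite[Lemma 3.1]{DiTa12}, but ensuring that the $1$-dimensional families of $y_1\in(-\infty,p_1]$ in part (a) really yield degenerate lifts, uniformly in $y_1$, will likely require the genericity afforded by the Zariski open $\Omega$ from Proposition~\ref{prop:t-crit_mixed-cells} to rule out accidental collisions between lower-order terms.
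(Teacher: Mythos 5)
Your part \textbf{a)} contains a genuine gap at the step ``lifting this singular solution with Hensel's lemma.'' Hensel/Newton lifting applies to \emph{simple} roots (invertible Jacobian); a multiple common root of the initial system $\overline{f_1-w_1}=\overline{f_2-w_2}=0$ does not lift to a degenerate root of the system over $\K$ in general --- typically it splits into several simple roots with the same valuation. This is exactly the phenomenon the paper isolates in Remark~\ref{rem:prop_super-crit} (the system $1+2z_1+z_2=1-z_1z_2=0$ has an unstable tropical intersection at the origin but only simple solutions there), and the results of~\cite{DiTa12} you invoke only guarantee that \emph{some} lift of the tropical data has a degenerate intersection, after perturbing all coefficients; here $f_1,f_2$ are fixed and only the constants $w_1,w_2$ are free, so choosing the leading terms $\overline{w}_1,\overline{w}_2$ does not settle the existence question, and no uniform-in-$y_1$ conclusion follows from genericity of $\Omega$ either. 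The paper's proof of \textbf{a)} avoids initial-form lifting altogether: it builds the Jacobian into the system, i.e.\ it looks for actual points of $C(f)$ on the fiber $\{f_1=a\}$ (whose $f$-image is then automatically a degenerate value, with $\val(w_2)=F_2(\xi)$ supplied by the argument behind Lemma~\ref{lem:val-F_F-val}). Existence is obtained by two genuinely tropical-to-classical transfer results: Lemma~\ref{lem:Brugalle-Lopez-de-Medrano} (bounded components of $\Val(X_1)\cap\Val(X_2)$ contain valuations of true intersection points) when $C(F)\cap T_1(\val a)$ has a bounded component in $\overline\xi$, and, for the remaining super-critical $0$-cells, the tropicalization $\Gamma$ of the space curve $\{f_1-a=\cj_f=0\}\subset(\K\setminus 0)^3$ together with the balancing condition, which forces a vertical edge of direction $(0,0,-1)$ (Claim~\ref{clm:vertical_edge}) and hence solutions for \emph{every} $y_1\le F_1(\xi)$ at once. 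Your proposal contains no substitute for either of these existence mechanisms.

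Your part \textbf{b)} is closer to the paper in spirit (case analysis over non-super-critical cells, and the easy exclusion $\val(w_i)>F_i(x)$), but as written it rests on a vague ``non-unimodular mixed volume'' criterion. What is actually needed, and what the paper does, is to note that a degenerate solution $z$ with $\Val(z)=x$ and $\val(w_i)\neq F_i(x)$ forces $\overline{f_i}(\overline z)=0$ and $\overline{\cj}_f(\overline z)=0$, and then to check, using the explicit initial forms of $\cj_f$ computed in the appendix (and the deformed initial forms $\overline{\cj}^{\lambda}_f$ for $1$-cells), that this initial system has no solutions in $(\C^*)^2$ in every non-super-critical configuration; the case of a $1$-cell adjacent to two irrelevant cells is instead handled by Lemma~\ref{lem:transversal}. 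You should replace the mixed-volume heuristic by these computations, but the decisive missing ingredient in the proposal is the lifting argument in part \textbf{a)}.
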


\begin{example}\label{ex:super-crit-prop-a}
Given the map $f$ from Example~\ref{ex:map_super-crit}, we compute $\cj_f= -1+z_1 - z_2$. According to Definition~\ref{def:super-crit}, the cell $\{(0,0)\}$ is supercritical for $T_1$. We also have $F(0,0) = ( 0,0)$. As Proposition~\ref{prop:t-crit_mixed-cells} \textbf{a} predicts, for any $\ell>0$, and any $y_1=-\ell$, and $y_2=0$, we choose $w_1 := t^{\ell}$, and $w_2:= (-3+2t^{\ell}+t^{2\ell})/4$, so that $\val(w_1) = y_1<F_1(0,0)=0$, $\val(w_2) = y_2=F_2(0,0)=0$, and the point
\[
\big((t^{\ell}+1)/2,~(t^{\ell}-1)/2\big)
\] has valuation $(0,0)$, is a root to $\cj_f$, and a solution to the system~\eqref{eq:sys:main-half}. Analogously, the cell $\{(0,0)\}$ is supercritical for $T_2$: for any $\ell>0$, and any $y_1=0$, and $y_2=-\ell$, we choose $w_1 := -3 +t^{\ell}$, and $w_2:= t^{\ell}(-2+t^{\ell})$, so that $\val(w_1) = y_1=0$, $\val(w_2) = y_2<0$, and the point $(t^{\ell}-1,~t^{\ell}-2)$ has valuation $(0,0)$, is a root to $\cj_f$, and a solution to the system~\eqref{eq:sys:main-half}.
% and $T_2$. 
%%The tropical curves $T_1$, $T_2$ and $C(F)$ are represented in Figure [REF] to the left in blue, dashed, and red respectively. 
%As Proposition~\ref{prop:t-crit_mixed-cells} \textbf{a} predicts for the super-critical cell $(0,0)$, 
%the system~\eqref{eq:sys:main-half} has a solution 
%\[
%\tilde{z}= ((s+1)/2,~(s-1)/2),
%\] such that $\Val(\tilde{z})=(0,0)$ for any $s$ satisfying $\val(s)<0$.
\end{example}

%Let $f$ be the map $z\mapsto (z_1+z_2,~z_1 +z_1z_2)$. Then $\cj_f= -1+z_1 - z_2$, and the tropical curves $T_1$, $T_2$, and $C(F)$  are represented in the Figure to the right. The cell $(0,0)$ is super-critical since the system~\eqref{eq:sys:main-half} has a solution 
%\[
%\tilde{z}= ((1+s)/2,~(-1+s)/2),
%\] such that $\Val(\tilde{z})=(0,0)$ for any $s$ satisfying $\val(s)<0$.
\begin{remark}\label{rem:prop_super-crit}
Two tropical curves having an unstable intersection, does not always imply that their lifted curves in $\TK$ have a degenerate intersection. Take the system $1+ 2z_1 +z_2 = 1- z_1z_2 = 0$, whose tropical intersection is an unstable isolated point at $(0,0)$ resulted from a vertex intersecting  the relative interior of an edge. This system has two simple solutions $(-1,1),(1/2,-2)\in \TK$ with valuation at $(0,0)$. 

Known results (see e.g.,~\cite[Prop. 5.8]{DiTa12}) assert that once an unstable tropical intersection $T_1\cap T_2$ occurs, there exists two curves $X_1,X_2\subset\TK$, tropicalizing to $T_1$ and $T_2$ respectively, and having a degenerate intersection point (in this example, if we replace the constant term $1$ in $1- z_1z_2$ by the value $1/8$, for instance, the above system will have a unique solution $(-1/4,-1/2)$). In contrast, Proposition~\ref{prop:super-critical-mixed-cells} \textbf{a.}  constitutes a more detailed description relating tropical intersections to classical ones. Namely, we describe some classes of tropical non-degenerate intersections for which any lifting $X_1$ and $X_2$, up to changing two coefficients in the classical polynomials defining $X_1$ and $X_2$, has a degenerate intersection. In the same vain, Proposition~\ref{prop:super-critical-mixed-cells} \textbf{b.} shows that some other unstable tropical intersections have no lifting with a degenerate intersection.
\end{remark}

\section{Proof of Theorem~\ref{thm:main}}\label{sec:proof-main-th} Consider two polynomials $f_1,f_2\in\K[z_1,z_2]$, and let $\Xi$ denote the subdivision of $\R^2$ induced by $f^{\trop}$. Recall that we use $D(f)$ to denote the discriminant of the map $f:\TK\to\TK$, which is defined as the subset $f(C(f))\subset\TK$.
%This paper is devoted to computing $\Val (f(C(f)))\subset\R^2$, where $f(C(f))\subset \TK$ is the \emph{discriminant of $f$}, and is denoted by $D(f)$. Note that the discriminant of $(f_1 - f_1(0,0),f_2 - f_2(0,0))$ coincides with $D(f)$ up to translation. Therefore, for simplicity, in the rest of the paper, we will assume that $f_1(0,0) = f_2(0,0) = 0$. 

\begin{definition}\label{def:tropical-cell-discriminant}
For any $\xi\in\Xi$, let $\tdf:\Xi\to \pow(\R^2)$ be the function taking $\xi$ to the set of points $\Val(w)\in\R^2$ for which there exists $z\in C(f)$, $w = f(z)$, and $\Val(z)\in\xi$. That is, $(z_1,z_2,w_1,w_2)\in(\K\setminus 0)^4$  is a solution to 
\begin{equation}\label{eq:sys:main}
\left\{
\begin{array}{@{}ccccc@{}}
   f_1(z) - w_1 & = & 0,\\
   f_2(z) - w_2 & = & 0,\\
\cj_f(z) & = & 0.
  \end{array}
  \right.
\end{equation} 
\end{definition} 
%Since any critical point 
%Note that any critical point of $f$, is mapped, under $\Val$, onto one of the cells in $\Xi$. Then, as a consequence of the definitions, we obtain 
The following equality follows from the definitions
\[
\Val(D(f)) = \bigcup_{\xi\in\Xi} \tdf(\xi).
\]
Therefore, Theorem~\ref{thm:main} is a consequence of the following result.

\begin{theorem}\label{th:main-concise}
Let $A$ be a pair of finite subsets in $\N^2\setminus\{(0,0)\}$, consider the subset $\Omega\subset\K[A]$ defined in Proposition~\ref{prop:t-crit_mixed-cells}, let $f$ be a map in $\Omega$, let $F$ denotes the tropical map $f^{\trop}:\R^2\to\R^2$, and let $\Xi$ be the subdivision of $\R^2$ induced by $F$. Then, it holds
% 
%, and consider the polynomial map  $f_1,f_2\in\K[z_1,z_2]$, where $A_i$ is the support of $f_i$ ($i=1,2$). Then, for any $ $
%
%
%Let $f:\TK\to\TK$ be a generically valuated polynomial map, and let $\Xi$ denote the polyhedral subdivision of $\R^2$ induced by $F:=f^{\trop}$. Then, for any $\xi\in\Xi$, it holds
\begin{equation}\label{eq:th-main-concise}
\Phi_{F}(\xi) = \tdf(\xi).
\end{equation}
\end{theorem}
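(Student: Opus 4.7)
The strategy is to verify $\Phi_F(\xi)=\tdf(\xi)$ by establishing both inclusions, proceeding by the dimension of $\xi$ and the case distinction of Definition~\ref{def:main}, and exploiting systematically the three technical results stated in Section~\ref{sec:class-trop-maps}. The role of each result is clear: Lemma~\ref{lem:val-F_F-val} converts tropical critical points back into classical ones whose image valuates exactly as $F$ predicts; Proposition~\ref{prop:t-crit_mixed-cells} controls when cells contain tropical critical points and, for $2$-cells, pins down the preimages of $\Phi_F(\xi)$ under $F$; and Proposition~\ref{prop:super-critical-mixed-cells} is the key device that identifies exactly where degenerate solutions to~\eqref{eq:sys:main-half} can appear, thereby generating (or forbidding) the half-line contributions $\Raw(F(\xi))$ and $\Ras(F(\xi))$ to $\tdf(\xi)$.

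The first step is the empty case: if $\Phi_F(\xi)=\emptyset$, then $\xi\cap C(F)=\emptyset$ by Proposition~\ref{prop:t-crit_mixed-cells}(1), and Lemma~\ref{lem:crit-points-invar} together with $C(F)=\Val(C(f))$ forces $\tdf(\xi)=\emptyset$. Next, for $\dim\xi=2$, Proposition~\ref{prop:t-crit_mixed-cells}(3) already says $y\in\Phi_F(\xi)\Leftrightarrow F^{-1}(y)\cap\xi\cap C(F)\neq\emptyset$; combining this with Lemma~\ref{lem:val-F_F-val} yields $\Phi_F(\xi)\subseteq\tdf(\xi)$ immediately. For the reverse inclusion, given $z\in C(f)$ with $x:=\Val(z)\in\xi$, I want to conclude $\Val(f(z))\in\Phi_F(\xi)$. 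In the diagonal case~\ref{it:221} the cell is not super-critical to either $T_i$, so Proposition~\ref{prop:super-critical-mixed-cells}(b) forces $\val(f_i(z))=F_i(x)$ for $i=1,2$, and the conclusion follows from Proposition~\ref{prop:t-crit_mixed-cells}(3). In the lateral case~\ref{it:222}, $F(\xi)$ is a point; the extra rays produced by essential $1$-cells $\gamma$ are precisely generated via Proposition~\ref{prop:super-critical-mixed-cells}(a), whose hypothesis is exactly the essentiality condition.

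For $\dim\xi\le 1$ I would combine Lemma~\ref{lem:val-F_F-val} with a direct case-by-case match against Definition~\ref{def:main} and Definition~\ref{def:super-crit}. The backward inclusion $\tdf(\xi)\subseteq\Phi_F(\xi)$ is always driven by Proposition~\ref{prop:super-critical-mixed-cells}(b): for any $z\in C(f)$ with $x:=\Val(z)\in\xi$, that proposition pins $\val(f_i(z))$ to $F_i(x)$ unless $\xi$ is super-critical with respect to $T_i$, so $\Val(f(z))$ is forced to lie at $F(x)$ or on one of the prescribed half-lines $\Raw(F(x))$, $\Ras(F(x))$. Reading Definition~\ref{def:super-crit} back into Definition~\ref{def:main} identifies the super-critical situations as exactly the cases~\ref{it:01},~\ref{it:021},~\ref{it:131},~\ref{it:1321},~\ref{it:1322}, matching $\Phi_F(\xi)$. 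The forward inclusion is produced symmetrically: the points of the form $F(\xi)$ are realised by lifting $x\in\xi\cap C(F)$ through Lemma~\ref{lem:val-F_F-val} (noting $\xi\cap C(F)\neq\emptyset$ by Proposition~\ref{prop:t-crit_mixed-cells}(1)), while the half-line portions are produced by Proposition~\ref{prop:super-critical-mixed-cells}(a), which yields degenerate solutions to~\eqref{eq:sys:main-half} for every admissible value of the free coordinate along the super-critical ray.

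The principal obstacle is the lateral $2$-cell case~\ref{it:222}, where $F$ collapses $\xi$ to a point so that $\tdf(\xi)$ cannot equal $F(\xi)$; the correct image is the union of $F(\gamma)$ over essential adjacent $1$-cells $\gamma$, which demands a delicate interplay between the valuation of $\cj_f$ on $\xi$, the mixed-volume input hidden in the essentiality condition, and both parts of Proposition~\ref{prop:super-critical-mixed-cells}. Additionally, clause~(2) of Proposition~\ref{prop:t-crit_mixed-cells} must be invoked here to rule out unbounded contributions coming from~\ref{it:1321}-type $1$-cells adjacent to $\xi$, so that no spurious rays pollute $\tdf(\xi)$. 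After this case is handled, the remaining entries of Definition~\ref{def:main}, catalogued in Figure~\ref{fig:situations-table}, reduce to a routine, albeit lengthy, verification that can be organised along the same template.
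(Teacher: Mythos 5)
Your overall architecture coincides with the paper's: a case analysis along Definition~\ref{def:main}, with the forward inclusion coming from Proposition~\ref{prop:super-critical-mixed-cells}~\textbf{a)} on the ray portions and from Lemma~\ref{lem:val-F_F-val} together with Proposition~\ref{prop:t-crit_mixed-cells} on the rest, and the backward inclusion coming from Proposition~\ref{prop:super-critical-mixed-cells}~\textbf{b)}. However, there is a genuine gap in your backward inclusion for cells that are super-critical to \emph{both} $T_1$ and $T_2$ (the case~\ref{it:012}, where $\Phi_F(\xi)=\Rasw(F(\xi))$). You claim that Proposition~\ref{prop:super-critical-mixed-cells}~\textbf{b)} ``pins $\val(f_i(z))$ to $F_i(x)$ unless $\xi$ is super-critical with respect to $T_i$, so $\Val(f(z))$ is forced to lie at $F(x)$ or on one of the prescribed half-lines.'' When $\xi$ is super-critical for both curves, part~\textbf{b)} applies to neither index, so it pins nothing; the only free constraint is the non-archimedean inequality $\val(f_i(z))\leq F_i(x)$, which confines $\Val(f(z))$ to the quadrant $\{y_1\leq F_1(\xi),\,y_2\leq F_2(\xi)\}$, not to the union of the two rays $\Rasw(F(\xi))$. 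The paper closes exactly this hole with Lemma~\ref{lem:transversal}: a critical point $z$ with $\Val(z)=\xi$ is a degenerate solution of $f-w=0$, hence $\xi$ cannot be a stable intersection point of $T_1(\val w_1)\cap T_2(\val w_2)$; since $\Xi$ is stable, unstability forces $\val(w_i)=F_i(\xi)$ for at least one $i$, which is what puts $\Val(f(z))$ on $\Rasw(F(\xi))$. Your proposal never invokes Lemma~\ref{lem:transversal} (or any substitute), so this case is not proved.

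A secondary problem is your treatment of the reverse inclusion for lateral $2$-cells (case~\ref{it:222}). You assert that $F(\xi)$ is a point and that the contributions over essential $1$-cells are ``generated via Proposition~\ref{prop:super-critical-mixed-cells}(a), whose hypothesis is exactly the essentiality condition.'' Neither is accurate: for a lateral $2$-cell the image $F(\xi)$ is one-dimensional, super-critical cells have dimension at most one so part~\textbf{a)} does not apply to $\xi$, and the hypothesis of part~\textbf{a)} is super-criticality (Definition~\ref{def:super-crit}), not essentiality. The correct argument is the same as the one you give for diagonal $2$-cells: any $x\in\xi$ lies off $T_1\cup T_2$, so $\Val(f(z))=F(x)$ for every critical point $z$ with $\Val(z)=x$, and then Proposition~\ref{prop:t-crit_mixed-cells}(3) — whose statement already encodes the essentiality condition through $\Phi_F$ — gives $F(x)\in\Phi_F(\xi)$; in particular no separate appeal to part~\textbf{a)} is needed, and your remark about clause~(2) ruling out unbounded rays is likewise not where that clause enters (it is used inside the proof of Proposition~\ref{prop:super-critical-mixed-cells}, not in this theorem). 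With the stable-intersection argument added for the doubly super-critical $0$-cells and the lateral $2$-cell case rerouted through Proposition~\ref{prop:t-crit_mixed-cells}(3), your proof would match the paper's.
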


\begin{proof}
Let $\xi$ be a cell in $\Xi$. Then, it is in one of the situations of Definition~\ref{def:main}. In what follows, we will show that~\eqref{eq:th-main-concise} holds true for $\xi$ in each of those cases. 

\begin{itemize}

	\item \textbf{Double super-critical cells.} %\subsubsection{Double super-critical cells}\label{subsub:double-sup-crit} 
First, we consider cells $\xi$ that are super critical to both $T_1$ and to $T_2$. That is $\xi$ satisfies Definition~\ref{def:main}:~\ref{it:012}. First, note that if $\xi$ is an unstable intersection of $T_1(y_1)$ and $T_2(y_2)$, then the point $y:=(y_1,y_2)$ belongs to $\Rasw(F(\xi))$. Therefore, the inclusion $\tdf(\xi)\subset \Rasw(F(\xi))$ follows from Lemma~\ref{lem:transversal}. In what follows, we prove the inclusion 
\begin{equation}\label{eq:inclusion_rays}
\Rasw(F(\xi))\subset \tdf(\xi).
\end{equation} For any $y\in\Raw(F(\xi))$, it holds $y_1\leq F_1(\xi)$, and $y_2=F_2(\xi)$. Thanks to Proposition~\ref{prop:super-critical-mixed-cells}: \textbf{a)}, we find $w\in\TK$ satisfying $\val(w)=y$ and so that the system
 \begin{equation}\label{eq:sys:main-half2}
  \left\{
  \begin{array}{@{}ccccc@{}}
   f_1 - w_1 & = & 0,\\
   f_2 - w_2 & = & 0,
  \end{array}
  \right.
\end{equation} has a degenerate solution $\tilde{z}\in\TK$ with $\Val(\tilde{z})\in\overline{\xi}$. This shows that $\Raw(F(\xi))\subset \tdf(\xi)$. Similarly, we can show that $\Ras(F(\xi))\subset \tdf(\xi)$.\\

\item \textbf{Single super-critical cells.} 
 Let $i\in\{1,2\}$ be such that $\xi\subset T_i$. Without loss of generality, we may assume that $i=1$. We consider a cell $\xi$ that is super critical to $T_1$ but not super critical to $T_2$. That is, it holds $\Phi_F(\xi)=\Raw(F(\xi))$ following Definition~\ref{def:main}. Similarly to the above case, we have $\tdf(\xi)\subset\Rasw(F(\xi))$, and $\Raw(F(\xi))\subset \tdf(\xi)$. However, Proposition~\ref{prop:super-critical-mixed-cells}: \textbf{b)} shows that $\tdf(\xi)$ is contained in a horizontal line passing through $F(\xi)$. Therefore, we get $\Raw(F(\xi)) = \tdf(\xi)$.\\
 
\item \textbf{Non-super critical, lower-dimensional cells.}
We consider $0$-cells and $1$-cells $\xi$ that are not super-critical to $T_1$, nor super-critical to $T_2$.
That is, according to Definition~\ref{def:main}, either $\Phi_F(\xi)=F(\xi)$, or $\Phi_F(\xi)=\emptyset$.
Theorem~\ref{th:main-concise} follows from Proposition~\ref{prop:t-crit_mixed-cells}\textbf{(1)} whenever $\Phi_F(\xi)=\emptyset$. Otherwise, if $\Phi_F(\xi)=F(\xi)$, then Proposition~\ref{prop:super-critical-mixed-cells} \textbf{b.} applied to both $(\xi,T_1)$, and $(\xi,T_2)$ shows that for any $x\in\xi$, the system~\eqref{eq:sys:main} has no solutions with valuation $x$ if $\Val(w)\neq F(x)$. This shows that $\tdf(\xi)\subset F(\xi)$. The second inclusion follows from Lemma~\ref{lem:val-F_F-val}. \\

\item \textbf{Cells of dimension two.} 
Let $\xi$ be a $2$-cell in $\Xi$. Then, for any $x\in\xi$, we have $y=F(x)\Leftrightarrow x\in T_1(y_1)\cap T_2(y_2)$. This implies $\tdf(\xi)\subset F(C(F)\cap\xi)$. Similarly to above, Lemma~\ref{lem:val-F_F-val} implies that the second inclusion holds true as well. This yields
\[
\tdf(\xi)=F(C(F)\cap\xi).
\] Proposition~\ref{prop:t-crit_mixed-cells}\textbf{(3)} concludes Theorem~\ref{th:main-concise} if $\xi$ is a $2$-cell.
\end{itemize}
\end{proof}

\section{Proof of Theorem~\ref{th:Newton-polytope}}\label{sec:Newton-pol} 
Let $A:=(A_1,A_2)$ be a pair of supports in $\N^2\setminus\{(0,0)\}$. In this section, we consider generic polynomial maps $\TT\to\TT$ in $\C[A]$, and we will prove Theorem~\ref{th:Newton-polytope} by providing a method that uses Theorem~\ref{thm:main} to compute the Newton polytope of their discriminant, up to translations, without relying on elimination. We will apply this method for the polynomial map appearing in the below example.

\begin{example}\label{ex:computing_polytope}
Let $g:=(g_1,g_2):\C^2_{u,v}\to\C^2_{a,b}$ be the map defined as 
\[
(u,v)\mapsto (v+v^2 +uv+uv^2+u^2v^2,~2v+3u^2v +4u^2v^2).
\] Its discriminant is given by the polynomial $\cd_g\in\C[a,b]$ below. Its Newton polytope is depicted in Figure~\ref{fig:polytopes_ex}. We obtained $\cd_f$ by computing the elimination ideal~\eqref{eq:disc_elimination} corresponding to $f$ using the software \href{https://www.sagemath.org/}{\texttt{SAGE}}. 
{\scriptsize
\begin{multline*}\label{eq:discriminant_example}
1073741824 a^7 b^2 - 1476395008 a^6 b^3 + 843055104 a^5 b^4 - 255852544 a^4 b^5 + 43515904 a^3 b^6 - 3932160 a^2 b^7 + 147456 a b^8 \\[-3pt]- 2147483648 a^8 + 4093640704 a^7 b - 4794089472 a^6 b^2 + 2401763328 a^5 b^3 - 776110080 a^4 b^4 + 113057792 a^3 b^5 + 1681408 a^2 b^6 \\[-3pt] - 1296896 a b^7 + 18432 b^8 + 3142582272 a^7
 - 3726114816 a^6 b + 2258173952 a^5 b^2 - 583200768 a^4 b^3 + 120756992 a^3 b^4 + 98266432 a^2 b^5 \\[-3pt] +  1723152 a b^6 - 114496 b^7 - 701095936 a^6 + 408629248 a^5 b - 35657472 a^4 b^2 - 41773504 a^3 b^3 + 240054850 a^2 b^4 + 25297148 a b^5 \\[-3pt] + 288850 b^6  - 241209344 a^5 + 16084992 a^4 b - 219803072 a^3 b^2 + 212243632 a^2 b^3 + 50506468 a b^4 + 1668728 b^5 - 27366400 a^4 - 162048000 a^3 b \\[-3pt] + 61899400 a^2 b^2 + 42999450 a b^3 + 2187200 b^4 - 34512000 a^3 - 8960000 a^2 b + 17283500 a b^2 + 1168375 b^3 - 5475000 a^2 + 2737500 a b + 228125 b^2.
\end{multline*}
} 
%The Newton polytopes of $f_1$, $f_2$, and $\cd_f$ are represented in Figure~\ref{fig:polytopes_ex}.
\end{example}

\begin{figure}[h]
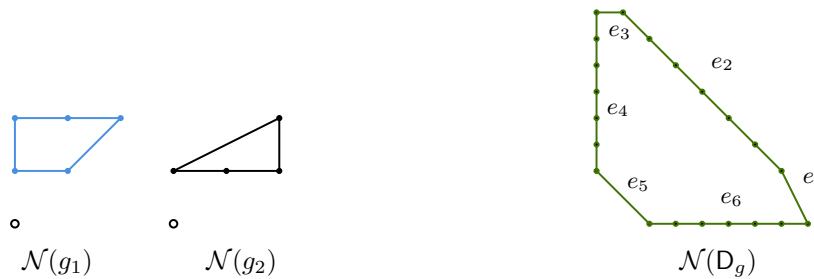


\tikzset{every picture/.style={line width=0.75pt}} %set default line width to 0.75pt        

% [inline block 2: 1 envs, 20425 chars -> data_tex | \begin{tikzpicture}[x=1pt,y=1pt,yscale=-1,xscale=1] %uncomment if require: \path (0,292); %set diagram left start at 0, ...]

\caption{The Newton polytopes of $g_1$, $g_2$ and $\cd_g$ from Example~\ref{ex:computing_polytope}}\label{fig:polytopes_ex}
\end{figure}

%We start with the following observation.

\begin{lemma}\label{lem:complex-and-Puiseux}
There exists a polytope $\Delta\subset\R^2$, and two Zariski open subsets $U\subset\KW$, and $V\subset\C[A]$ for which any $(f,g)\in U\times V$ satisfies
\[
\cN(\cd_f) = \cN(\cd_g) = \Delta.
\]
\end{lemma}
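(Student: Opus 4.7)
The plan is to construct a ``universal discriminant'' with symbolic coefficients and show that its monomial support is preserved under generic specialization to any field extending $\Q$, so that the same polytope $\Delta$ arises over both $\K$ and $\C$. I introduce formal indeterminates $\mathbf{c}:=\{c_{i,a}: i=1,2,\ a\in A_i\}$ over $\Q$, set $R:=\Q[\mathbf{c}]$, and form the generic polynomials $\mathcal{F}_i:=\sum_{a\in A_i}c_{i,a}\, z^a\in R[z_1,z_2]$. Over the fraction field $K_R:=\Q(\mathbf{c})$, the map $\mathcal{F}:=(\mathcal{F}_1,\mathcal{F}_2)$ is dominant, so by standard elimination theory the ideal
\[
\langle \mathcal{F}_1-y_1,\ \mathcal{F}_2-y_2,\ \cj_{\mathcal{F}}\rangle \cap K_R[y_1,y_2]
\]
is principal. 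Via Gauss's lemma I choose a primitive generator $\cd_{\mathrm{univ}}\in R[y_1,y_2]$ and write $\cd_{\mathrm{univ}}=\sum_{(\alpha,\beta)}P_{\alpha,\beta}(\mathbf{c})\, y_1^{\alpha}y_2^{\beta}$ with $P_{\alpha,\beta}\in R$.

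Let $\mathcal{S}:=\{(\alpha,\beta):P_{\alpha,\beta}\not\equiv 0\}$, set $\Delta:=\mathrm{conv}(\mathcal{S})\subset\R^2$, and define $Q:=\prod_{(\alpha,\beta)\in\mathcal{S}}P_{\alpha,\beta}\in R\setminus\{0\}$. The locus $\{Q\neq 0\}$ is Zariski open in the parameter affine space and remains nonempty after base change to any field extension of $\Q$; for any specialization $\mathbf{c}\mapsto\mathbf{c}^*$ in this locus, the specialized polynomial $\cd_{\mathrm{univ}}|_{\mathbf{c}=\mathbf{c}^*}$ has support exactly $\mathcal{S}$ and hence Newton polytope $\Delta$. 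I will take $U\subset\K[A]$ and $V\subset\C[A]$ to be the intersection of $\{Q\neq 0\}$ (pulled back to the respective parameter spaces) with the Zariski open subset of parameters on which $f$ is dominant and $\cd_{\mathrm{univ}}|_{\mathbf{c}=\mathbf{c}^*}$ differs from the minimal generator of~\eqref{eq:disc_elimination} by a nonzero scalar in the relevant base field. Both are nonempty since $Q$ is a nonzero polynomial with rational coefficients.

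The main obstacle I anticipate is justifying that elimination and specialization commute on a nonempty Zariski open subset of parameter space, i.e.\ that $\cd_{\mathrm{univ}}|_{\mathbf{c}=\mathbf{c}^*}$ does not acquire extraneous factors, drop in degree, or fail to equal $\cd_f$ up to a scalar --- any of these would allow $\cN(\cd_f)$ to become strictly smaller than $\Delta$ even when $Q(\mathbf{c}^*)\neq 0$. I would control this via generic flatness of the universal incidence variety $V(\mathcal{F}_1-y_1,\mathcal{F}_2-y_2,\cj_{\mathcal{F}})\to\mathrm{Spec}(R)$ together with irreducibility of the generic discriminant hypersurface, each cutting out a further Zariski open piece of parameter space. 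Since these constraints are defined over $\Q$, the resulting open set produces nonempty open subsets in both $\K^{|A_1|+|A_2|}$ and $\C^{|A_1|+|A_2|}$ simultaneously, yielding the common polytope $\Delta$ and completing the construction.
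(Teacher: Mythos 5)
Your proposal is correct and follows essentially the same route as the paper: both pass to the discriminant of the generic pair with indeterminate coefficients over $\Q$ (resp. $\Z$), take $\Delta$ to be the convex hull of its support, and define $U\subset\K[A]$ and $V\subset\C[A]$ as complements of the vanishing locus of the product of the nonzero coefficient polynomials, which is defined over the rationals and therefore cuts out nonempty Zariski open sets over both fields simultaneously. The only difference is that you explicitly flag and patch (via generic flatness and irreducibility of the generic discriminant) the point where specialization must commute with elimination so that the specialized universal discriminant agrees with $\cd_f$ up to scalar, a step the paper's proof treats as immediate.
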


\begin{proof}
Let $K$ be a field of characteristic zero. Then, any point in $K[A]$ is obtained from two polynomials $h_1\in \Z[z_1,z_2,c_a:a\in A_1]$ and $h_2\in \Z[z_1,z_2,d_a:a\in A_2]$, with particular values for the coefficients $(c_a)_a$ and $(d_a)_a$. Accordingly, Equation~\eqref{eq:disc_elimination} shows that if $h:=(h_1,h_2)$, the polynomial $\cd_h$ is either a constant or is expressed as a finite sum
\[
\sum \phi_\alpha(c,d)w^\alpha,
\] where $\phi_\alpha\in\Z[c_a:a\in A_1,d_a:a\in A_2]$, and $w^{\alpha}:=w_1^{\alpha_1}w_2^{\alpha_2}$. The former case implies that any map $K^2\to K^2$, obtained from $K[A]$, is not dominant, and the latter case shows that, if $P$ is defined as the polynomial $\prod \phi_\alpha$, then for any $f,\tilde{f}\in K[A]\setminus\mathbb{V}_K(P)$, it holds $\phi_\alpha(c,d)=0$ $\Leftrightarrow$ $\phi_\alpha(\tilde{c},\tilde{d})=0$. Therefore, the Newton polytopes of $\cd_f$ and $\cd_{\tilde{f}}$ coincide in $\R^2$. 

To finish the proof, take $U := \KW\setminus\VK(P)$, and $V:= \C[A]\setminus\VC(P)$.
\end{proof}
%A polynomial map $\TT\to\TT$ in $\C[A]$ is said to be $A$-\emph{generic} if it is contained in the set $\tilde{U}$ of Lemma~\ref{lem:complex-and-Puiseux}. 
Let $\mathcal{O}$ denote the Zariski open in $\K[A]$ given as the common intersection of the set $\Omega$ from Proposition~\ref{prop:t-crit_mixed-cells}, and $U$ from Lemma~\ref{lem:complex-and-Puiseux}. Let $g$ be a polynomial map $\TT\to\TT$ in the set $V\in\C[A]$ of Lemma~\ref{lem:complex-and-Puiseux}. In what follows, we compute $\Delta:=\cN(\cd_g)$.

\subsection{Dual fan of the polytope} First, we recover the dual fan of $\Delta$: It is enough to consider a tropical polynomial map $F:=(F_1,F_2):\R^2\to\R^2$, supported on $A$, and inducing a stable subdivision $\Xi$ of $\R^2$. Hence, any perturbation on the tropical coefficients appearing in $F_1$ and $F_2$ will not change the above stability condition (c.f. Definition~\ref{def:Transv}). Since $\cO$ is Zariski open in $\K[A]$, one can find $f\in\cO$ satisfying $f^{\trop}=F$. Lemma~\ref{lem:complex-and-Puiseux} shows that $\cN(\cd_f) = \Delta$, and Proposition~\ref{prop:mixed_subd} (4) determines the dual fan, $\cF(\cd_f)$, of $\cN(\cd_f)$ from the unbounded edges of $\Val(D(f))$. Hence, thanks to Theorem~\ref{th:main-concise}, we obtain $\cF(\Delta)$ from Definition~\ref{def:main} applied to $F$. 
%From Lemma~\ref{lem:complex-and-Puiseux}, and Definition~\ref{def:generically_valuated}, 
% a generically-valuated polynomial map $f\in\KW$ can be perturbed to satisfy $\cN(\cd_f)=\Delta$. 
 \begin{example}\label{ex:computing_polytope_cntd}
 Assume that $F:=(F_1,F_2):\R^2\to\R^2$ is the tropical polynomial map
 {\small
 \[
 (x_1,~x_2)\mapsto (\max(x_2,~2x_1+x_2,~2x_1+2x_2),~\max(x_2,~2x_2-5,~x_1+x_2-1,~x_1+2x_2-4,~2x_1+2x_2-4)).
 \] 
 } The subdivision of $\R^2$ induced by $F$ is stable (see Figure~\ref{fig:ex:computing_polytope_cntd}). The set $\Val(D(f))\subset\R^2$ determines $\cF(\Delta)$, both of which are represented in Figure~\ref{fig:ex:computing_polytope_cntd}.
 \end{example}

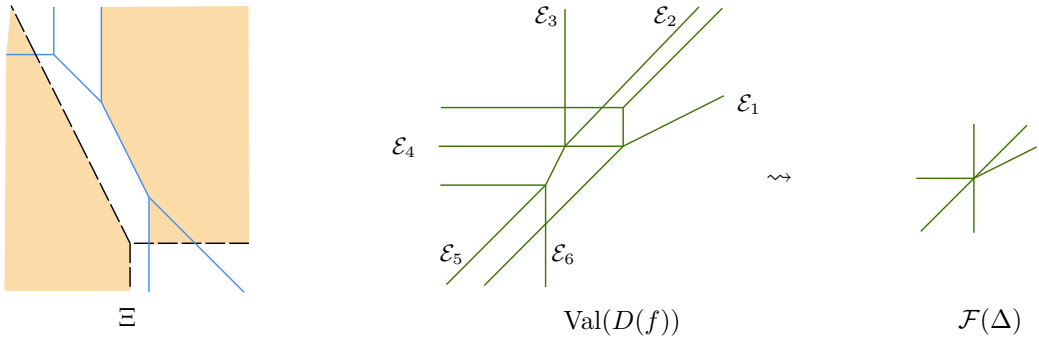
\begin{figure}[h]

\tikzset{every picture/.style={line width=0.55pt}} %set default line width to 0.75pt        

\begin{tikzpicture}[x=.9pt,y=.9pt,yscale=-1,xscale=1]
%uncomment if require: \path (0,292); %set diagram left start at 0, and has height of 292

%Shape: Polygon [id:ds07145228917281743] 
\draw  [color={rgb, 255:red, 255; green, 255; blue, 255 }  ,draw opacity=0 ][fill={rgb, 255:red, 245; green, 166; blue, 35 }  ,fill opacity=0.4 ] (111.57,151.18) -- (88.47,150.94) -- (69.5,131.8) -- (49.5,91.8) -- (49.5,51.8) -- (111.34,51.38) -- cycle ;
%Shape: Polygon [id:ds8011365980844956] 
\draw  [color={rgb, 255:red, 255; green, 255; blue, 255 }  ,draw opacity=0 ][fill={rgb, 255:red, 245; green, 166; blue, 35 }  ,fill opacity=0.4 ] (88.47,150.94) -- (69.5,151.8) -- (69.56,132.38) -- cycle ;
%Shape: Polygon [id:ds24901358324591893] 
\draw  [color={rgb, 255:red, 255; green, 255; blue, 255 }  ,draw opacity=0 ][fill={rgb, 255:red, 245; green, 166; blue, 35 }  ,fill opacity=0.4 ] (21.6,71.81) -- (9.5,71.8) -- (11.57,51.18) -- cycle ;
%Shape: Polygon [id:ds3075309582789787] 
\draw  [color={rgb, 255:red, 255; green, 255; blue, 255 }  ,draw opacity=0 ][fill={rgb, 255:red, 245; green, 166; blue, 35 }  ,fill opacity=0.4 ] (61.57,151.18) -- (61.57,171.18) -- (8.78,171.19) -- (9.5,71.8) -- (21.6,71.81) -- cycle ;
%Straight Lines [id:da47662959485247014] 
\draw [color={rgb, 255:red, 65; green, 117; blue, 5 }  ,draw opacity=1 ]   (236.21,126.72) -- (236.21,169.59) ;
%Straight Lines [id:da9319756811964931] 
\draw [color={rgb, 255:red, 65; green, 117; blue, 5 }  ,draw opacity=1 ]   (236.21,126.72) -- (192,126.72) ;
%Straight Lines [id:da6248907358631485] 
\draw [color={rgb, 255:red, 65; green, 117; blue, 5 }  ,draw opacity=1 ]   (244.37,110.37) -- (236.21,126.72) ;
%Straight Lines [id:da2850147200883716] 
\draw [color={rgb, 255:red, 65; green, 117; blue, 5 }  ,draw opacity=1 ]   (268.82,110.37) -- (210.4,168.96) ;
%Straight Lines [id:da7117137270331187] 
\draw [color={rgb, 255:red, 65; green, 117; blue, 5 }  ,draw opacity=1 ]   (236.21,126.72) -- (194.53,168.53) ;
%Straight Lines [id:da23398124537589016] 
\draw [color={rgb, 255:red, 65; green, 117; blue, 5 }  ,draw opacity=1 ]   (268.82,110.37) -- (311.14,89.14) ;
%Straight Lines [id:da9972707821009874] 
\draw [color={rgb, 255:red, 65; green, 117; blue, 5 }  ,draw opacity=1 ]   (268.82,110.37) -- (244.37,110.37) ;
%Straight Lines [id:da7040438283016026] 
\draw [color={rgb, 255:red, 65; green, 117; blue, 5 }  ,draw opacity=1 ]   (244.37,110.37) -- (191.31,110.37) ;
%Straight Lines [id:da42977260815067464] 
\draw [color={rgb, 255:red, 65; green, 117; blue, 5 }  ,draw opacity=1 ]   (310.5,52.29) -- (268.82,94.1) ;
%Straight Lines [id:da9180769048367883] 
\draw [color={rgb, 255:red, 65; green, 117; blue, 5 }  ,draw opacity=1 ]   (268.82,94.1) -- (268.82,110.37) ;
%Straight Lines [id:da4278305889182945] 
\draw [color={rgb, 255:red, 65; green, 117; blue, 5 }  ,draw opacity=1 ]   (300.72,51.71) -- (244.37,110.37) ;
%Straight Lines [id:da08909346756884284] 
\draw [color={rgb, 255:red, 65; green, 117; blue, 5 }  ,draw opacity=1 ]   (268.82,94.1) -- (192.19,94.1) ;
%Straight Lines [id:da7681147232350946] 
\draw [color={rgb, 255:red, 65; green, 117; blue, 5 }  ,draw opacity=1 ]   (244.37,52.71) -- (244.37,110.37) ;
%Straight Lines [id:da04417749276789762] 
\draw [color={rgb, 255:red, 0; green, 0; blue, 0 }  ,draw opacity=1 ] [dash pattern={on 7.5pt off 1.5pt}]  (61.57,151.18) -- (61.57,171.18) ;
%Straight Lines [id:da22511100264089623] 
\draw [color={rgb, 255:red, 0; green, 0; blue, 0 }  ,draw opacity=1 ] [dash pattern={on 7.5pt off 1.5pt}]  (111.57,151.18) -- (61.57,151.18) ;
%Straight Lines [id:da7731462415269374] 
\draw [color={rgb, 255:red, 0; green, 0; blue, 0 }  ,draw opacity=1 ] [dash pattern={on 7.5pt off 1.5pt}]  (61.57,151.18) -- (41.57,111.18) ;
%Straight Lines [id:da9076997835138447] 
\draw [color={rgb, 255:red, 74; green, 144; blue, 226 }  ,draw opacity=1 ]   (69.5,131.8) -- (89.5,151.8) ;
%Straight Lines [id:da27675871647603867] 
\draw [color={rgb, 255:red, 74; green, 144; blue, 226 }  ,draw opacity=1 ]   (69.5,131.8) -- (69.5,171.8) ;
%Straight Lines [id:da27777283224658644] 
\draw [color={rgb, 255:red, 74; green, 144; blue, 226 }  ,draw opacity=1 ]   (89.5,151.8) -- (109.5,171.8) ;
%Straight Lines [id:da36052707941673623] 
\draw [color={rgb, 255:red, 74; green, 144; blue, 226 }  ,draw opacity=1 ]   (49.5,91.8) -- (69.5,131.8) ;
%Straight Lines [id:da8666380552036604] 
\draw [color={rgb, 255:red, 74; green, 144; blue, 226 }  ,draw opacity=1 ]   (49.5,51.8) -- (49.5,91.8) ;
%Straight Lines [id:da35810171693825676] 
\draw [color={rgb, 255:red, 74; green, 144; blue, 226 }  ,draw opacity=1 ]   (29.5,71.8) -- (49.5,91.8) ;
%Straight Lines [id:da7638201307136964] 
\draw [color={rgb, 255:red, 74; green, 144; blue, 226 }  ,draw opacity=1 ]   (29.5,51.8) -- (29.5,71.8) ;
%Straight Lines [id:da20825849520097284] 
\draw [color={rgb, 255:red, 74; green, 144; blue, 226 }  ,draw opacity=1 ]   (29.5,71.8) -- (9.5,71.8) ;
%Straight Lines [id:da2386891098981202] 
\draw [color={rgb, 255:red, 0; green, 0; blue, 0 }  ,draw opacity=1 ] [dash pattern={on 7.5pt off 1.5pt}]  (41.57,111.18) -- (21.57,71.18) ;
%Straight Lines [id:da8703205510208577] 
\draw [color={rgb, 255:red, 0; green, 0; blue, 0 }  ,draw opacity=1 ] [dash pattern={on 7.5pt off 1.5pt}]  (21.57,71.18) -- (11.57,51.18) ;
%Straight Lines [id:da00885251497643158] 
\draw [color={rgb, 255:red, 65; green, 117; blue, 5 }  ,draw opacity=1 ]   (416.21,123.86) -- (416.21,146.73) ;
%Straight Lines [id:da05270303720759206] 
\draw [color={rgb, 255:red, 65; green, 117; blue, 5 }  ,draw opacity=1 ]   (416.21,123.86) -- (392,123.86) ;
%Straight Lines [id:da036328344338987684] 
\draw [color={rgb, 255:red, 65; green, 117; blue, 5 }  ,draw opacity=1 ]   (416.21,123.86) -- (393.9,146.17) ;
%Straight Lines [id:da2663502668661373] 
\draw [color={rgb, 255:red, 65; green, 117; blue, 5 }  ,draw opacity=1 ]   (416.21,100.99) -- (416.21,123.86) ;
%Straight Lines [id:da4701300067071883] 
\draw [color={rgb, 255:red, 65; green, 117; blue, 5 }  ,draw opacity=1 ]   (438.53,101.55) -- (416.21,123.86) ;
%Straight Lines [id:da6196438097576813] 
\draw [color={rgb, 255:red, 65; green, 117; blue, 5 }  ,draw opacity=1 ]   (416.21,123.86) -- (442.62,110.66) ;

% Text Node
\draw (242.53,177.55) node [anchor=north west][inner sep=0.75pt]   {$\Val(D(f))$};
% Text Node
\draw (408.53,177.35) node [anchor=north west][inner sep=0.75pt]   {$\mathcal{F}( \Delta )$};
% Text Node
\draw (55.53,177.35) node [anchor=north west][inner sep=0.75pt] {$\Xi$};
% Text Node
\draw (327.73,120.75) node [anchor=north west][inner sep=0.75pt]  {$\rightsquigarrow$};
% Text Node
\draw (315.58,87.68) node [anchor=north west][inner sep=0.75pt]  [font=\small]  {$\mathcal{E}_{1}$};
% Text Node
\draw (280.58,50.68) node [anchor=north west][inner sep=0.75pt]  [font=\small]  {$\mathcal{E}_{2}$};
% Text Node
\draw (230.58,50.68) node [anchor=north west][inner sep=0.75pt]  [font=\small]  {$\mathcal{E}_{3}$};
% Text Node
\draw (237.58,150.68) node [anchor=north west][inner sep=0.75pt]  [font=\small]  {$\mathcal{E}_{6}$};
% Text Node
\draw (190.58,150.68) node [anchor=north west][inner sep=0.75pt]  [font=\small]  {$\mathcal{E}_{5}$};
% Text Node
\draw (170.58,105.68) node [anchor=north west][inner sep=0.75pt]  [font=\small]  {$\mathcal{E}_{4}$};

\end{tikzpicture}
\caption{The subdivision $\Xi$, the set $\Val(D(f))$ and $\cF(\Delta)$ corresponding to Example~\ref{ex:computing_polytope_cntd}}\label{fig:ex:computing_polytope_cntd}
\end{figure}

\subsection{Binomial curves and parallel lines}\label{sub:parallel_lines} Recall that the dual fan $\cF(\Delta)$ determines the relative arrangement of edges $e_1,\ldots,e_r\subset\Delta$ together with their slopes. Hence, in order to obtain $\Delta$ up to translation, it is enough to compute the \emph{integer lengths} $\ell_1,\ldots,\ell_r\in\N$ of its respective edges $e_1,\ldots,e_r\subset\Delta$. That is, $\ell_i: = |e_i\cap\N^2|-1$ ($i=1,\ldots,r$).

Let $\cE_1,\ldots,\cE_r\subset \R^2$ denote the collections in $\R^2$ of unbounded edges of $\Val(D(f))$, satisfying
\[
\delta(\cE_i) = e_i,~i=1,\ldots,r.
\] In Example~\ref{ex:computing_polytope_cntd}: $\cE_1$, $\cE_3$ and $\cE_6$ are three half-lines with directions $(1,0)$, $(0,1)$ and $(0,-1)$ respectively, and $\cE_2$, $\cE_4$ and $\cE_5$ are three sets of parallel half-lines with directions $(1,1)$, $(-1,0)$ and $(-1,-1)$ respectively.

Next, we consider collections of tropical curves in $\R^2$ of the form $\max(0,~a_1x_1+a_2x_2+\lambda)$ or\\ $\max(a_1x_1,~a_2x_2+\mu)$ for some $\lambda,\mu\in\R$, and intersect them with $\Val(D(f))$ at the edges $\cE_1,\ldots,\cE_r$. The realization of any such tropical curve is a line (in the classical sense) in $\R^2$ with rational slope. For any vector $\alpha\in\Q^2$, there exists a line $L\subset\R^2$, having direction $\alpha$, and satisfying
\begin{equation}\label{eq:lines-separating}
L\cap \Val(D(f)) =L\cap \bigcup_{i\in K}\cE_i,
\end{equation} for some $K\subset [r]$. Indeed, it is enough to choose $L$ on one side of the cluster of vertices of $\Val(D(f))$. In Figure~\ref{fig:ex:computing_polytope_cntd}, if $\alpha=(1,-1)$, then $K=\{1,2,3\}$.

%Let $\alpha\in\Q^2$ be any vector not spanning any one-dimensional cone in $\cF$. Then, there exists two parallel lines $L,L'\subset\R^2$, having direction $\alpha$, and satisfying
%\begin{equation}\label{eq:lines-separating}
%L\cap \Val(D(f)) =L\cap \bigcup_{i\in K}\cE_i\text{ and } L'\cap\Val(D(f))=L'\cap \bigcup_{i\in K'}\cE_i,
%\end{equation}  such that $K\sqcup K'=[r]$. Indeed, it is enough to choose $L$ and $L'$ far enough, and located on different sides of the cluster of vertices of $\Val(D(f))$. In Figure~\ref{fig:ex:computing_polytope_cntd}, if $\alpha=(1,-1)$, then $K=\{1,2,3\}$ and $K'=\{4,5,6\}$. 
%
%We may assume that $|K|=\max(|K|,|K'|)=:k$. One can choose $k$ pairs of lines $L_1,\ldots,L_k$ and $L'_1,\ldots,L'_k$ in $\R^2$ so that for each $j\in [k]$, the pair $(L_j,L'_j)$ satisfies~\eqref{eq:lines-separating}. Indeed, this can be done by changing the slopes slightly for each $j$. We then consider the set $\cB_j$ ($j=1,\ldots,k$) of all binomials in $\K[w_1,w_2]$ with the smallest degree in which any $B\in\cB_j$ satisfies
We then consider the set $\cB$ of all binomials in $\K[w_1,w_2]$ with the smallest degree in which the Newton polytope, $\cN(B)$, of any $B\in\cB$ is orthogonal to $L$. 
%\[
%\cN(B) \text{ is orthogonal to }L. 
%\]
 For example, if $L$ has direction $(1,-1)$, then $\cB=\{r+sw^{\bm 1}\in\K[w_1,w_2]:r,s\in\K\setminus 0\}$. Thanks to Proposition~\ref{prop:mixed_subd}, we can choose $B\in\cB$ so that $L=\VT(B)$. Then, from the description of $L$, all points in the intersection $\VT(B)\cap\Val(D(f)) $ are stable.

\subsection{Preimages of binomial curves} We will show how to use the set $\cB\subset\K[w_1,w_2]$ for obtaining a linear relation on the set of edge-lengths of $\Delta$. Let $B$ be a generic enough binomial in $\cB$. First, we compute the number of intersection points 
 \begin{equation}\label{eq:binomial-and-discr}
 \VK(B)\cap D(f),
 \end{equation} using the intersections in~\eqref{eq:lines-separating}. The \emph{mixed volume} of two polytopes $\Pi_1,\Pi_2\subset\R^2$ is 
\[
\MV(\Pi_1,\Pi_2):=\Vol(\Pi_1+\Pi_2) -\Vol(\Pi_1) - \Vol(\Pi_2),
\] where $\Vol(\cdot)$ is the usual volume of polytopes in $\R^2$. Bernstein's Theorem~\cite{Ber75} shows that, as $B$ is generic, the number of solutions in $\TK$, counted with multiplicities is equal to $\MV(\cN(B),~\Delta)$. As $\cN(B)$ is a segment with endpoints in $\N^2$, and of integer length one, it can be easily checked
 \begin{equation}\label{eq:mixed_volume=sums}
\MV(\cN(B),~\Delta) = \sum_{i\in K} \ell_i\cdot|\det(u_i,\nu)|,
\end{equation} where $u_i\in\Z^2$ is a primitive integer vector spanning $e_i$, and $\nu\in\Z^2$ spans $\cN(B)$. 
% \begin{equation}\label{eq:mixed_volume=sums}
%\MV(\cN(B),~\Delta) = \sum_{i\in K} \ell_i\cdot|\det(u_i,\nu_j)|= \sum_{i\in K'} \ell_i\cdot|\det(u_i,\nu_j)|,
%\end{equation} where $u_i\in\Z^2$ is a primitive integer vector spanning $e_i$, and $\nu_j\in\Z^2$ spans $\cN(B)$. 

\begin{example}\label{ex:exmpl-pol-cont-cont}
For the polytope $\Delta$ from Example~\ref{ex:computing_polytope}, if $\sigma:=\cN(B)$ is the segment with endpoints $(0,0)$ and $(1,1)$, then $\MV(\Delta,\cN(B))=\Vol(\Delta+\cN(B))-\Vol(\Delta)$. The latter is equal to the sums of the volumes of the zonotopes $\sigma+e_1$, $\sigma+e_2$ and $\sigma+e_3$ (see e.g. Figure~\ref{fig:Minkowski)sum}),  which amounts to 
\[
\ell_1\cdot\det(u_1,\nu)+\ell_2\cdot\det(u_2,\nu)+ \ell_3\cdot\det(u_3,\nu)  = 1\cdot 3 + 6\cdot 2 + 1\cdot 1 = 16.
\] Notice also that it holds $\ell_4\cdot\det(u_4,\nu)+\ell_5\cdot\det(u_5,\nu)+ \ell_6\cdot\det(u_6,\nu)  = 6\cdot 1 + 2\cdot 2 + 6\cdot 1 = 16$. 
\end{example}

\begin{figure}[h]
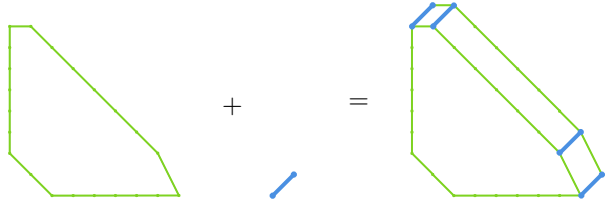


\tikzset{every picture/.style={line width=0.75pt}} %set default line width to 0.75pt        

% [inline block 3: 1 envs, 41649 chars -> data_tex | \begin{tikzpicture}[x=0.4pt,y=0.4pt,yscale=-1,xscale=1] %uncomment if require: \path (0,292); %set diagram left start at...]


\caption{The Minkowski sum of $\Delta$ and $\sigma$ from Example~\ref{ex:computing_polytope} .}\label{fig:Minkowski)sum}
\end{figure}

%
%
%Furthermore, the 
% 
% 
% This value can be easily computed 
% 
% The binomial $B$ can be chosen generic enough so that all above solutions are simple. Therefore, since $\cN(B)$ is a segment with endpoints in $\cN^2$, and of integer length one, we deduce that
%\begin{equation}\label{eq:classical=tropical-intersections}
%\VK(B)\cap D(f) = \sum_{i\in K} \ell_i\cdot|\det(u_i,\nu_j)|,
%\end{equation} where $u_i\in\Z^2$ is a primitive integer vector spanning $e_i$, and $\nu_j\in\Z^2$ is a primitive integer vector spanning $\cN(B)$. 
Let $M$ denote 
\[
\MV\big( \cN(\cj_f),~\cN(B\circ f)\big).
\]
\begin{claim}\label{clm:mixed-volume=tropical-intersection}
It holds $M = \MV(\cN(B),~\Delta)$.
\end{claim}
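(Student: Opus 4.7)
The plan is to prove Claim~\ref{clm:mixed-volume=tropical-intersection} via two applications of Bernstein's theorem coupled with a bijection between the resulting finite solution sets.

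First, I will apply Bernstein's theorem to the pair $(\cj_f,\,B\circ f) \in \K[z_1,z_2]^2$, which yields
\[
M \;=\; \MV\bigl(\cN(\cj_f),\,\cN(B\circ f)\bigr) \;=\; \#S,
\]
where $S := \{z \in \TK : \cj_f(z) = B(f(z)) = 0\}$, counted with intersection multiplicities. A point $z$ lies in $S$ precisely when $z \in C(f)$ and $f(z) \in \VK(B)$. Second, applying Bernstein's theorem to the pair $(B,\,\cd_f) \in \K[w_1,w_2]^2$ gives $\MV(\cN(B),\Delta) = \#(\VK(B) \cap \VK(\cd_f))$. Since $\cd_f$ is the reduced generator of the elimination ideal (the scheme-theoretic image of a dominant morphism between irreducible varieties being reduced) and $D(f)$ is Zariski dense in $\VK(\cd_f)$, for generic $B \in \cB$ we have $\VK(B) \cap \VK(\cd_f) = \VK(B) \cap D(f)$.

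Third, I will establish that the evaluation map $\Psi : S \to \VK(B)\cap D(f),\ z \mapsto f(z)$, is a bijection, and that all intersection multiplicities are one on both sides. Surjectivity is tautological from $D(f) = f(C(f))$. Injectivity follows from the generic birationality of the restriction $f|_{C(f)} : C(f) \to D(f)$ for $f \in \mathcal{O}$: each generic point of $D(f)$ admits a unique preimage on $C(f)$, and for generic $B \in \cB$ the points of $\VK(B) \cap D(f)$ avoid the finite non-birational locus. Stable transversality of $\VT(B)$ with $\Val(D(f))$ (cf.\ Lemma~\ref{lem:transversal} together with the stability of $\Xi$) also ensures that all intersection multiplicities are one. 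Chaining these steps produces
\[
M \;=\; \#S \;=\; \#\bigl(\VK(B)\cap D(f)\bigr) \;=\; \MV(\cN(B),\Delta),
\]
as desired.

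The main obstacle will be justifying the generic birationality of $f|_{C(f)}$: this is what guarantees that the degree factor relating the $z$-space count to the $w$-space count equals one rather than some higher integer $d$. This is a classical singularity-theoretic property: for generic $f$ the Jacobian $\Jac(f)$ has corank exactly one along the smooth critical curve $C(f)$, so $f|_{C(f)}$ is a local immersion at generic points and hence birational onto the curve $\overline{D(f)}$. The genericity conditions built into $\mathcal{O}$ are compatible with this property, and, if needed, $\mathcal{O}$ can be shrunk by a further Zariski-open condition (analogous to the construction in Lemma~\ref{lem:gen-valued_transversal}) to enforce it.
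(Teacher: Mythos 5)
Your proposal is correct and follows essentially the same route as the paper: two applications of Bernstein's theorem (to $\cj_f,\,B\circ f$ in the source and to $B,\,\cd_f$ in the target) linked by identifying $\VK(B)\cap D(f)$ with $f^{-1}(\VK(B))\cap C(f)$ under $f$ for generic choices of $f$ and $B$. The only difference is presentational: you spell out the injectivity/birationality of $f|_{C(f)}$ and the simplicity of the intersections explicitly, whereas the paper assumes reducedness of $D(f)$, chooses $B$ so that the intersection consists of simple roots, and delegates the remaining details to the argument of Farnik--Jelonek--Ruas.
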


\begin{proof}
We follow similar steps to those in the proof of~\cite[Lemma 4.3]{farnik2020whitney}. Assume that all components of $D(f)$ are reduced. 
One can choose the coefficients $r,s\in\K\setminus 0$ of $B$ such that~\eqref{eq:binomial-and-discr} consists only of simple roots. Then, it holds
\begin{equation}\label{eq:preimage-enter-crit}
\VK(B)\cap D(f) = f^{-1}(\VK(B))\cap C(f).
\end{equation} Note that $f^{-1}(\VK(B))$ is a curve given by the polynomial $B\circ f$, and recall that $C(f)$ is given by the polynomial $\cj_f$. Therefore, Bernstein's Theorem~\cite{Ber75} shows that 
\[
f^{-1}(\VK(B))\cap C(f) = M
\] if $B\circ f,\cj_f\in\K[z_1,z_2]$ are generic enough. This choice for $f$ and $B$ is possible in $\cO$, and $\cB$ respectively.
\end{proof}

\begin{example}\label{ex:compt_polytope_cont3}
Thanks to Lemma~\ref{lem:crit-points-invar}, all polynomials $\cj_f$ corresponding to any polynomial $f\in\cO$ with supports $A$ from Example~\ref{ex:computing_polytope_cntd} share the same Newton polytope represented in Figure~\ref{fig:New-pol-Jac}. The same goes for the Newton polytope of the polynomial $B\circ f$, where $B$ is a generic binomial in $\K[w_1,w_2]$ with Newton polytope $\sigma$ from Example~\ref{ex:exmpl-pol-cont-cont}. The reader may check (c.f. Figure~\ref{fig:New-pol-Jac}) that $\MV\big( \cN(\cj_f),~\cN(B\circ f)\big)=16$.
\end{example}

\begin{figure}[h]

\tikzset{every picture/.style={line width=0.75pt}} %set default line width to 0.75pt        

% [inline block 4: 1 envs, 25683 chars -> data_tex | \begin{tikzpicture}[x=0.5pt,y=0.5pt,yscale=-1,xscale=1] %uncomment if require: \path (0,292); %set diagram left start at...]


\caption{The Minkowski sum of $\cN(\cj_f)$ and $\cN(B\circ f)$ from Example~\ref{ex:compt_polytope_cont3}.}\label{fig:New-pol-Jac}
\end{figure}

\begin{remark}\label{rem:New-pol-Jac}
Thanks to the generic choices for $f$ and $B$, the polytopes $\cN(\cj_f)$ and $\cN(B\circ f)$ can be determined from $A$ by purely combinatorial means. That is, computing $\cj_f$ and $B\circ f$ is not required. Indeed, wehenever coefficients are generic enough, operations on the polynomials such as derivation, summation, multiplication, and exponentiation have well-defined analogues in terms of the Newton polytopes.
\end{remark}

%
%
%Furthermore, we choose the coefficients of $f$ to be generic enough so that the polynomials $B\circ f,\cj_f\in\K[z_1,z_2]$ do not have common roots in $\TK$ when restricted at any of the faces of their Newton polytopes. Then, Bernstein's Theorem~\cite{Ber75} shows that 
%\[
%f^{-1}(\VK(b))\cap C(f) = M_j.
%\] 
%\end{proof}

\subsection{Obtaining the edge-lengths}
In order to determine the vector of integer lengths $\ell:=(\ell_1,\ldots,\ell_r)$ of the polytope $\Delta$, we construct $r$ lines $L_1,\ldots,L_r$, with corresponding orthogonal vectors $\nu_1,\ldots,\nu_r$, and solve for $\ell$ a square system 
\begin{equation}\label{eq:lin-sys}
D\cdot\ell = M,
\end{equation} whose equations are of the form~\eqref{eq:mixed_volume=sums}. As we have seen in Remark~\ref{rem:New-pol-Jac}, the corresponding values $\MV(\cN(B_j),~\Delta)$ and $\big(\det(u_i,\nu_j)\big)_{i,j}$ depend only on the set of supports $A$. The below Claim~\ref{clm:D-non-sing} shows that such a system with non-singular matrix $D$ always exists.

\begin{example}\label{ex:polytope_comp_4}
Consider the six lines $L_1,\ldots,L_6$, where $v_1=v_5=(-1,3)$, $v_2 = v_6= (1,1)$, $v_3=(2,1)$ and $v_4 = (1,-1)$. We position the  lines so that the corresponding sets $K_1,\ldots,K_6$ satisfy $K_1 = \{1,2,3,4\}=[6]\setminus K_5$, $K_2 =K_3=\{1,2,3\}=[6]\setminus K_6$ and $K_4=\{1,6\}$. All together the six equations, each obtained from~\eqref{eq:mixed_volume=sums} applied to the line $L_i$ ($i=1,\ldots,6$), give rise to the linear system 
\begin{equation*}
\begin{pmatrix}
1 & 2 & 3 & 1 & 0 & 0 \\
3 & 2 & 1 & 0 & 0 & 0 \\
5 & 3 & 1 & 0 & 0 & 0 \\
1 & 0 & 0 & 0 & 0 & 1 \\
0 & 0 & 0 & 0 & 2 & 3 \\
0 & 0 & 0 & 1 & 2 & 1 \\
\end{pmatrix}
\cdot
\begin{pmatrix}
\ell_1 \\
\ell_2 \\
\ell_3 \\
\ell_4 \\
\ell_5 \\
\ell_6 \\
\end{pmatrix}
=
\begin{pmatrix}
22 \\
16 \\
24 \\
7 \\
22 \\
16 \\
\end{pmatrix}.
\end{equation*} Therefore, it holds $(\ell_1,~ \ell_2,~\ell_3,~\ell_4,~\ell_5,~\ell_6) = (1, 6, 1, 6, 2, 6)$.
\end{example}
To finish, we give a recipe on how to choose the above mentioned set of lines for any $\Delta$. Recall that for each line $L$ above, we obtain the set of indexes $K\subset [r]$ given by intersecting $L$ with the set $\cE_1\cup\cdots\cup\cE_r$. Assume that $K=\{1,\ldots,k\}$, and that the directional vector of $L$ is not collinear to any ray from the dual fan $\cF(\Delta)$. Then, one can choose $k$ lines $L_1,\ldots,L_k$ in $\R^2$ so that for each $i\in [k]$, the line $L_i$ satisfies~\eqref{eq:lines-separating}. Indeed, this can be done by choosing slightly different tilt for $L$ each time. For each such $L_i$, we consider the line $L'_i$, parallel to $L_i$ so that each $L'_i$ admits an analogous set $K'=[r]\setminus K$. Indeed, it is enough to choose $L'_i$ to be located far enough from $L_i$ and on the other side of the cluster of vertexes of $\Val (D(f))$. Assume without loss of generality that $k=\max(|K|,|K'|)$. 

%The following claim assure that 

%
%
%
%
%
%These equations are given by a set of parallel line $L_j$ and $L'_j$ defined in~\ref{sub:parallel_lines}, together forming a system 
%\[
%D\cdot\ell = M,
%\] where $D$ is the matrix $\big(|\det(u_i,\nu_j)| \big)_{i,j\in [k]}$, and $M$ is the vector $(M_j)_{j\in [k]}^\top$. The below Claim~\ref{clm:D-non-sing} shows that one can always find $k$ such pairs of lines so that the matrix $D$ is non-singular.
%
%Now we are ready to compute the integer lengths $\ell_1,\ldots,\ell_r\in\N$ of the polytope $\Delta$. 
%
%
%For each $j\in [k]$, we consider two lines $L_j$ and $L_j'$ as in~\eqref{eq:lines-separating}. Then, Claim~\ref{clm:mixed-volume=tropical-intersection} shows that 
%\[
%\bigcup_{i\in K}\cE_i\cap L_i = \bigcup_{i\in K'}\cE_i\cap L_i' = M_j. 
%\] 
%Therefore, thanks to the below Claim~\ref{clm:D-non-sing}, to compute the integer lengths $\ell:=(\ell_i)_{i\in K}$, it is enough to solve, for $\ell$, the linear system of equations
%\[
%D\cdot\ell = 2M,
%\] where $D$ is the matrix $\big(|\det(u_i,\nu_j)| \big)_{i,j\in K}$, and $M$ is the vector $(M_j)_{j\in K}^\top$. As for the remaining integer lengths $\ell':=(\ell'_i)_{i\in K'}$, we solve the linear system of equations 
%\[
%D'\cdot\ell' = 2M',
%\] where $D'$ is the square matrix $\big(|\det(u_i,\nu_j)| \big)_{i\in K',j\in [|K'|]}$, and $M'$ is the vector $(M_1,\ldots,M|_{K'|})^\top$. 

\begin{claim}\label{clm:D-non-sing}
There exists a choice $(L_j,L_j')_{j\in [k]}$ satisfying~\eqref{eq:lines-separating}, and such that $D$ is a non-singular matrix. 
\end{claim}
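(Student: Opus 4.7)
The matrix $D$ has one row per chosen line; a line $L$ of direction $\nu\in\R^2$ satisfying~\eqref{eq:lines-separating} with set $K\subset[r]$ contributes the row $\hat{\rho}_K(\nu):=(|\det(u_i,\nu)|\cdot\mathbf{1}_{i\in K})_{i=1}^{r}$. As recalled just before the claim, for any generic direction $\nu$ (one not collinear with any ray of $\cF(\Delta)$) we can realize \emph{either} of the two sets $K=K(\nu)$ or $K=[r]\setminus K(\nu)$ by placing the line sufficiently far to one side or the other of the vertex cluster of $\Val(D(f))$. My plan is to prove that
\[
\mathcal{R}\;:=\;\{\hat{\rho}_{K(\nu)}(\nu),\,\hat{\rho}_{[r]\setminus K(\nu)}(\nu)\;:\;\nu\in\R^2\text{ generic}\}\subset\R^r
\]
linearly spans $\R^r$, and then to extract from $\mathcal{R}$ a basis of size $r$; the corresponding lines then assemble into a non-singular matrix $D$.

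The key step is the following spanning lemma. The vectors
\[
\rho(\nu)\;:=\;\hat{\rho}_{K(\nu)}(\nu)+\hat{\rho}_{[r]\setminus K(\nu)}(\nu)\;=\;\bigl(|\det(u_i,\nu)|\bigr)_{i=1}^{r},\qquad\nu\in\R^2,
\]
linearly span $\R^r$. To prove this, suppose $c=(c_1,\ldots,c_r)\in\R^r$ satisfies $c\cdot\rho(\nu)=0$ for every $\nu$. The arrangement of the $r$ lines $\{\det(u_i,\cdot)=0\}_{i=1}^{r}$ through the origin partitions $\R^2$ into $2r$ open chambers; inside a chamber, the signs $\epsilon_i:=\mathrm{sign}\det(u_i,\nu)$ are constant, so
\[
0\;=\;c\cdot\rho(\nu)\;=\;\sum_{i=1}^{r}c_i\epsilon_i\det(u_i,\nu)\;=\;\det\Bigl(\sum_{i=1}^{r}c_i\epsilon_i u_i,\;\nu\Bigr),
\]
which forces $v_{\epsilon}:=\sum_{i} c_i\epsilon_i u_i=0$. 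Crossing the wall $\{\det(u_{i_0},\cdot)=0\}$ into an adjacent chamber flips only $\epsilon_{i_0}$, yielding $v_{\epsilon}-v_{\epsilon'}=2c_{i_0}\epsilon_{i_0}u_{i_0}=0$; since $u_{i_0}\neq 0$, we conclude $c_{i_0}=0$. Because every sign can be flipped by a path of adjacent chambers around the origin, one gets $c=0$, as required.

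With the lemma in hand I construct the rows of $D$ inductively. Suppose that after $j-1\leq r-1$ steps I have selected pairs $(\nu_1,K_1),\ldots,(\nu_{j-1},K_{j-1})$ producing linearly independent rows spanning a subspace $V_{j-1}\subsetneq\R^r$. Since $\{\rho(\nu)\}_{\nu}$ is not contained in $V_{j-1}$, and after removing the finitely many directions parallel to a ray of $\cF(\Delta)$, I pick $\nu_j$ with $\rho(\nu_j)\notin V_{j-1}$; then at least one of $\hat{\rho}_{K(\nu_j)}(\nu_j)$, $\hat{\rho}_{[r]\setminus K(\nu_j)}(\nu_j)$ lies outside $V_{j-1}$, and I select it as the $j$-th row. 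The corresponding line $L_j$ (or its parallel translate $L_j'$) is realized by the placement argument preceding the claim. After $r$ steps, the rows form a basis of $\R^r$, so $D$ is non-singular.

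The main obstacle is the spanning lemma: the absolute values in the definition of $\rho$ obstruct any direct linear-algebra shortcut and necessitate the chamber-by-chamber analysis above. The remaining steps—realizing a prescribed $K_j$ via a line satisfying~\eqref{eq:lines-separating}, and genericity at each inductive stage—are straightforward once the lemma is available.
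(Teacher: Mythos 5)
Your route is genuinely different from the paper's (the paper identifies the configuration space of the $k$ lines with $\R^{2\times k}$, observes that~\eqref{eq:lines-separating} is an open condition and that $\det D$ is locally a not-identically-zero polynomial in the slopes, and concludes by genericity), but as written your argument has a real gap: the spanning lemma is false whenever $\Delta$ has two parallel edges. If $u_i$ and $u_j$ are parallel, then $|\det(u_i,\nu)|$ and $|\det(u_j,\nu)|$ are proportional for \emph{every} $\nu$, so all the vectors $\rho(\nu)$ lie in the proper subspace where the $i$-th and $j$-th coordinates are proportional, and they cannot span $\R^r$. This is not a marginal case: a Newton polytope of a discriminant can perfectly well have opposite parallel edges, and in fact the paper's own running example does — in Example~\ref{ex:computing_polytope} the polytope $\cN(\cd_g)$ has $e_2\parallel e_5$ and $e_3\parallel e_6$, which is visible in the matrix of Example~\ref{ex:polytope_comp_4}, where every row pair coming from one direction $\nu$ and the two placements has equal entries in columns $2,5$ and in columns $3,6$. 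The precise point where your proof of the lemma breaks is the wall-crossing step: when two of the lines $\{\det(u_i,\cdot)=0\}$ coincide, crossing that wall flips \emph{both} signs simultaneously, and you only obtain a linear relation among the corresponding $c_i$'s rather than their vanishing.

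The failure of the lemma also undermines the greedy induction, because at each step you need some $\nu$ with $\rho(\nu)\notin V_{j-1}$; once $V_{j-1}$ contains the (now proper) span of all the $\rho(\nu)$'s, no such $\nu$ exists, even though vectors of the form $\hat{\rho}_K(\nu)$ outside $V_{j-1}$ may still be available — and nothing in your argument prevents the greedy choices from reaching such a $V_{j-1}$. The statement you actually need, and which is true, is that the larger family $\mathcal{R}$ of the $\hat{\rho}_K(\nu)$ themselves spans $\R^r$: for each parallel class of edge directions one can take two linearly independent directions $\nu,\nu'$ and use \emph{both} placements $K$ and $[r]\setminus K$, which separates the coordinates that $\rho$ alone cannot separate (for a square, the four rows $(a,b,0,0)$, $(0,0,a,b)$, $(c,d,0,0)$, $(0,0,c,d)$ with $(a,b),(c,d)$ independent already span $\R^4$). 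Either prove spanning of $\mathcal{R}$ directly along these lines and run your induction on $\mathcal{R}$ rather than on the sums $\rho(\nu)$, or fall back on the paper's shorter perturbation/genericity argument; as it stands, the key lemma your construction rests on is false in the generality required.
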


\begin{proof}
Any line in $\R^2$ can be expressed of the form $\{y_2= \alpha y_1 + \beta\}$. Thus, we can identify the configuration space of lines $(L_j)_{j\in [k]}$ with $\R^{2\times k}$. If a pair $(L,L')$ satisfies~\eqref{eq:lines-separating}, then so does any of its perturbations. Then, there exists an open subset $U\subset\R^{2\times k}$ in which $(L_j,L'_j)_{j\in K}$ satisfies~\eqref{eq:lines-separating}.

Next, we find an open subset $V\subset U$ consisting of all lines $\{y_2= \alpha_j y_1 + \beta_j\}_{j\in K}$ so that $\det D\neq 0$. This is straight-forward as this determinant is locally a non-trivial polynomial in $\Z[\alpha_j,~j\in [k]]$. Hence we choose $V=U\cap (\R^{2\times k} \setminus \{\det D= 0\})$. 
\end{proof}

%\subsection{An example} Consider the map $f:=(f_1,f_2):\C^2_{u,v}\to\C^2_{a,b}$ written as 
%\[
%(u,v)\mapsto (v+v^2 +uv+uv^2+u^2v^2,~2v+3u^2v +4u^2v^2),
%\] whose discriminant is given by the polynomial $\cd_f\in\C[a,b]$ is the below polynomial 
%{\scriptsize
%\begin{multline*}\label{eq:discriminant_example}
%1073741824 a^7 b^2 - 1476395008 a^6 b^3 + 843055104 a^5 b^4 - 255852544 a^4 b^5 + 43515904 a^3 b^6 - 3932160 a^2 b^7 + 147456 a b^8 \\[-3pt]- 2147483648 a^8 + 4093640704 a^7 b - 4794089472 a^6 b^2 + 2401763328 a^5 b^3 - 776110080 a^4 b^4 + 113057792 a^3 b^5 + 1681408 a^2 b^6 \\[-3pt] - 1296896 a b^7 + 18432 b^8 + 3142582272 a^7
% - 3726114816 a^6 b + 2258173952 a^5 b^2 - 583200768 a^4 b^3 + 120756992 a^3 b^4 + 98266432 a^2 b^5 \\[-3pt] +  1723152 a b^6 - 114496 b^7 - 701095936 a^6 + 408629248 a^5 b - 35657472 a^4 b^2 - 41773504 a^3 b^3 + 240054850 a^2 b^4 + 25297148 a b^5 \\[-3pt] + 288850 b^6  - 241209344 a^5 + 16084992 a^4 b - 219803072 a^3 b^2 + 212243632 a^2 b^3 + 50506468 a b^4 + 1668728 b^5 - 27366400 a^4 - 162048000 a^3 b \\[-3pt] + 61899400 a^2 b^2 + 42999450 a b^3 + 2187200 b^4 - 34512000 a^3 - 8960000 a^2 b + 17283500 a b^2 + 1168375 b^3 - 5475000 a^2 + 2737500 a b + 228125 b^2.
%\end{multline*}
%}  In order to compute the Newton polytope of $\cd_f$ (represented in Figure [REF]) using Theorem~\ref{th:main-concise}, we proceed as follows. Let $F:=(F_1,F_2):\R^2\to\R^2$ denote the tropical polynomial map
%\[
%(x_1,x_2)\mapsto (\max( x_2,~ ))
%\]
%
%

\section{Proofs of Proposition~\ref{prop:t-crit_mixed-cells} and Lemma~\ref{lem:val-F_F-val}}\label{sec:tropical_critical}
We start this section by stating a technical result, Lemma~\ref{lem:Jacobian_at_origin2}, then proceed by proving Proposition~\ref{prop:t-crit_mixed-cells} and Lemma~\ref{lem:val-F_F-val}. In what follows, we keep the notations in \S\ref{sec:proof-main-th}, but we assume that $f$ lies inside the sets $\Omega_1$ and $\Omega_2$ defined in Lemmas~\ref{lem:gen-valued_transversal} and~\ref{lem:crit-points-invar}. 

Recall from \S\ref{subsub:subdivisions}, if $\xi$ is a cell in $\Xi$, then there exists a unique pair $(\xi_1,\xi_2)\in\Xi_1\times \Xi_2$ such that $\xi=\xi_1\cap\xi_2$. Furthermore,  the dual polyhedron of $\xi_i$ in $\tau_i$  is denoted  by $\delta(\xi_i)$ ($i=1,2$), and the dual polyhedron of $\xi$ in $\tau$ is $\delta(\xi_1)+\delta(\xi_2)$, denoted by $\delta(\xi)$. Recall Notation~\ref{not:restrictions}.
\begin{notation}\label{not:restrictions2}
Given a pair $g:=(g_1,g_2)$ of polynomials in $\K[z_1,z_2]$, we use $\overline{g}$ to denote the pair $(\overline{g}_1,\overline{g}_2)$ of polynomials in $\C[z_1,z_2]$. 

%For any cell $\xi\in\Xi$ containing $(0,0)$, we use $f_{\xi}$ to denote $(f_{1|\sigma(\xi_1)},f_{2|\sigma(\xi_2)})$, where $\sigma(\xi_i)$ is the set of all $b\in\delta(\xi_i)$ satisfying
%\[
%\val(c^i_b)=\max\{\val(c^i_a)~|~a\in\delta(\xi_i)\} \text{,~ for } i=1,2.
%\]
%the set $\sigma(\xi_i)$ collects the points $a\in\delta(\xi_i)$ that maximize $\val (c_a)$, where $c_az^a$ is a monomial term of $f_i$. 
\end{notation}

\begin{lemma}\label{lem:Jacobian_at_origin2}
Let $\xi$ be a cell in $\Xi$ containing $(0,0)$, and assume that $|\Jac_{z} \overline{f}|$ is not identically zero. Then, it holds 
\[
\overline{\cj}_f = |\Jac_{z} \overline{f}|.
\]
\end{lemma}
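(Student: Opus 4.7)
The plan is a direct termwise comparison of two polynomial expansions. First, I would expand the Jacobian determinant $\cj_f=\partial_{z_1}f_1\cdot\partial_{z_2}f_2-\partial_{z_2}f_1\cdot\partial_{z_1}f_2$ monomial by monomial. Writing $f_i=\sum_{a\in A_i}c^{(i)}_a z^a$ and collapsing the four derivative products yields
\[
\cj_f \;=\; \sum_{(a,b)\in A_1\times A_2}(a_1b_2-a_2b_1)\,c^{(1)}_a c^{(2)}_b\,z^{a+b-e_1-e_2},
\]
where $e_1,e_2$ denote the standard basis vectors of $\Z^2$ and the integer coefficient $(a_1b_2-a_2b_1)$ automatically vanishes whenever the exponent vector $a+b-e_1-e_2$ has a negative entry. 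The identical computation applied to $\overline{f}=(\overline{f}_1,\overline{f}_2)\in\C[z_1,z_2]^2$ gives the same expansion for $|\Jac_z\overline{f}|$, but with the sum restricted to $(a,b)\in B_1\times B_2$ and with complex coefficients $\overline{c}^{(1)}_a\overline{c}^{(2)}_b$, where $B_i\subset A_i$ is the support of $\overline{f}_i$.

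The second step uses the geometric hypothesis $(0,0)\in\xi$. By Theorem~\ref{th:Kapranov} this forces $F_i(0,0)=M_i:=\max_{a\in A_i}\val(c^{(i)}_a)$, so Notation~\ref{not:restrictions} identifies $B_i$ precisely as $\{a\in A_i:\val(c^{(i)}_a)=M_i\}$. Consequently, for each pair $(a,b)\in A_1\times A_2$ appearing in the expansion of $\cj_f$, the coefficient valuation satisfies $\val(c^{(1)}_a c^{(2)}_b)\le M_1+M_2$, with equality exactly when $(a,b)\in B_1\times B_2$. Thus the valuation-maximising contribution to the coefficient of each monomial $z^m$ of $\cj_f$ coincides, after applying $\overline{\cdot}$, with the coefficient of $z^m$ in $|\Jac_z\overline{f}|$.

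To conclude, I would invoke the hypothesis $|\Jac_z\overline{f}|\not\equiv 0$: it guarantees that the total contribution at valuation $M_1+M_2$ is not entirely cancelled, so the maximal coefficient-valuation of $\cj_f$ is exactly $M_1+M_2$, and therefore $\overline{\cj}_f$ extracts precisely the sub-sum indexed by $B_1\times B_2$, which is $|\Jac_z\overline{f}|$. The only genuine obstacle is bookkeeping possible cancellations among several pairs $(a,b)\in B_1\times B_2$ sharing the same monomial $z^{a+b-e_1-e_2}$: such cancellations are inherited identically from the corresponding sum defining $|\Jac_z\overline{f}|$, so they never break the equality; the nondegeneracy assumption is exactly what excludes the pathological case in which all top-valuation terms collapse simultaneously and $\val$ of the coefficients of $\cj_f$ drops below $M_1+M_2$.
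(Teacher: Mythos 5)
Your proof is correct and takes essentially the same route as the paper's: both isolate the top-valuation coefficients of $f_1$ and $f_2$, observe that products of these dominate (in valuation) every other contribution to $\cj_f$, and use the hypothesis $|\Jac_{z}\overline{f}|\not\equiv 0$ to rule out a total collapse of the leading part, so that $\overline{\cj}_f$ is exactly the Jacobian determinant of the leading parts; if anything, your explicit per-monomial treatment of possible cancellations inside the top layer is slightly more careful than the paper's decomposition $\cj_f=G+H$. One cosmetic remark: the equality $F_i(0,0)=\max_{a\in A_i}\val(c^{(i)}_a)$ is just the evaluation of the tropical polynomial at the origin (and Notation~\ref{not:restrictions} then identifies $B_i$ with the argmax set), so the appeal to Theorem~\ref{th:Kapranov} there is unnecessary, though harmless.
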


\begin{proof} For each $i\in\{1,2\}$, we express $f_i$ as $ \sum c^i_az_1^{a_1}z_2^{a_2}$, and let $\sigma(\xi_i)$ be the set of all $b\in\delta(\xi_i)$ satisfying
\[
\val(c^i_b)=\max\{\val(c^i_a)~|~a\in\delta(\xi_i)\} \text{,~ for } i=1,2.
\] Recall from \S\ref{subs:tropical_polynomials} that the tropical function $F_i:\R^2\to\R$ is linear precisely at the cells in $\Xi_i$. In particular, for any $x\in\xi$, and any $v\in\sigma(\xi_i)$, it holds
\[
F_i(x_1,x_2) =  v_1 x_1 + v_2 x_2 +\val(c^i_v).
\]
Furthermore, it holds
\begin{equation}\label{eq:condition_orders}
\val(c^i_\alpha) = \val(c^i_\beta)>\val(c^i_\gamma),\quad \alpha,\beta\in \sigma(\xi_i),~\text{and } \gamma\in A_i\setminus \sigma(\xi_i).
\end{equation} 

To finish the proof, note that one can write $\cj_f$ as the sum $ G + H$, where $G=\det(\Jac_z \overline{f})$, and $H\in \K[z_1,z_2]$. Then, the assumption $G\not\equiv 0$ and Condition~\eqref{eq:condition_orders} implies that all coefficients in $G$ have the same valuation, say, $\lambda$, and any coefficient $c_a$ in the polynomial $H$ satisfies $\val(c_a)<\lambda$. Therefore, the proof follows from $ \overline{G}= |\Jac_{z} \overline{f}|$.
\end{proof}

\begin{proof}[Proof of Proposition~\ref{prop:t-crit_mixed-cells}]
Let $\xi$ be a cell in $\Xi$. Then, it satisfies one of the conditions of Definition~\ref{def:main}. We may assume without loss of generality that $(0,0)\in\xi$. 
%Pick a point $x\in\xi$, and assume without loss of generality that $x=(0,0)$. 
The proof requires computing the polynomial $\overline{\cj}_f$ in each of the situations \textbf{0} to \textbf{2} described in Definition~\ref{def:main} (these computations will also be useful later for the proof of Lemma~\ref{lem:val-F_F-val}). The polynomial $\overline{\cj}_f$ is made explicit in \S\ref{sec:appendix_some_proofs}, where we will use Lemma~\ref{lem:Jacobian_at_origin2} to count the number $N$ of monomial terms in $\overline{\cj}_f$ whenever $\overline{\cj}_\xi:=|\Jac_{z} \overline{f}_{\xi}|$ is not identically 
 zero. That is, thanks to Corollary~\ref{cor:lower-order}, we get 
\[
N>1~\Leftrightarrow C(F)\cap\xi\neq\emptyset. 
\] This will yield the first statement of the Proposition whenever $\xi$ satisfies conditions~\ref{it:0},~\ref{it:11},~\ref{it:12} or~\ref{it:21} of Definition~\ref{def:main}. As for the remaining cases, we will obtain $\overline{\cj}_\xi\equiv 0$, and thus a more detailed analysis is required for computing $\overline{\cj}_\xi$. This case-by-case analysis/computations is elaborated in \S\ref{sec:appendix_some_proofs}.
\end{proof}

\begin{proof}[Proof of Lemma~\ref{lem:val-F_F-val}]
Let $\xi$ denote the cell in $\Xi$ containing $x$, and assume without loss of generality that $x=(0,0)$. It is enough to show that there exists a solution $z\in C(f)$ with $\Val z = (0,0)$, and such that $\overline{z}$ is not a solution to $\overline{f_1} = 0$ nor to  $\overline{f_2} = 0$. Indeed, once this is the case, then the point $w:=f(z) $ would satisfy
$
\overline{w}=(\overline{f}(\overline{z}))\in (\C^*)^2,
$ and thus we obtain $\Val (w) =\Val (f(z)) = F(\Val(z)) = F(0,0)$.  

For each $\xi\in\Xi$ and each $i=1,2$, we show that either $\overline{f}_i$ is a monomial term or it does not divide the polynomial $\overline{G}$ from Lemma~\ref{lem:Jacobian_at_origin2}). Since both of $\overline{f}_1,\overline{f}_2$ are monomial terms if $\xi$ is a $2$-cell, we only consider the cases where $\xi$ is a $0$-cell or a $1$-cell.

 Assume without loss of generality that $\xi\subset T_1$. The proof proceeds by simply comparing the polynomial $\overline{f}_{i}$ ($i=1,2$) with $\overline{G}$ in all the situations for $\xi$. The latter cases are analyzed in detail, in \S\ref{sec:appendix_some_proofs}, of the proof of Proposition~\ref{prop:t-crit_mixed-cells}. 
\end{proof}

\section{Proof of Proposition~\ref{prop:super-critical-mixed-cells}}\label{sec:supercr} 
Let $A:=(A_1,A_2)$ be a pair of subsets in $\N^2\setminus\{(0,0)\}$, let $\Omega\subset\K[A]$ be the Zariski open from Proposition~\ref{prop:t-crit_mixed-cells}, and let $f\in\Omega$. We use $F:=(F_1,F_2):\R^2\to\R^2$ to denote the tropical polynomial map $f^{\trop}$. It induces a subdivision $\Xi$ of $\R^2$ as in Proposition~\ref{prop:mixed_subd}. 
\subsection{Proof of Proposition~\ref{prop:super-critical-mixed-cells}: \textbf{a.}}\label{sub:supercr} 
We need the following two results. 

\begin{lemma}\label{lem:Brugalle-Lopez-de-Medrano}
Let $X_1$ and $X_2$ be two algebraic curves in $(\K\setminus 0)^2$, intersecting in a finite number of points, and let $E\subset\R^2$ be a bounded component of the intersection of $\Val (X_1)$ and $\Val( X_2)$. Then, there exists a point $z\in X_1\cap X_2$ such that $\Val(z)\in E$.
\end{lemma}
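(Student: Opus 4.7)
The plan is to prove Lemma~\ref{lem:Brugalle-Lopez-de-Medrano} by a standard deformation argument: perturb $X_1$ and $X_2$ into curves whose tropicalizations intersect stably and transversely inside a neighborhood of $\overline{E}$, lift each resulting transverse intersection to a classical point via Lemma~\ref{lem:transversal}, and finally specialize the perturbation back to zero while controlling the coordinates of the classical lifts via the boundedness of $E$.

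Concretely, let $p_i\in\K[z_1,z_2]$ define $X_i$ for $i=1,2$, and fix a small open neighborhood $U\subset\R^2$ of $\overline{E}$ meeting no other connected component of $T_1\cap T_2$, where $T_i:=\VT(p_i)$. For a real parameter $s>0$ I would consider perturbations $p_i^s:=p_i+s\cdot r_i$, where the $r_i\in\K[z_1,z_2]$ are chosen so that, for $s$ small, the tropical curves $T_i^s:=\VT(p_i^s)$ intersect stably and transversely inside $U$ in the sense of Definition~\ref{def:Transv}. Such $r_i$ exist by a genericity argument analogous to Lemma~\ref{lem:gen-valued_transversal}, and as $s\to 0$ the cells of $T_i^s$ inside $U$ converge in Hausdorff distance to those of $T_i$. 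The tropical intersection multiplicity carried by $\overline{E}$ is strictly positive (it equals the mixed-volume contribution of the dual cells of $E$ in the mixed subdivision of $\cN(p_1)+\cN(p_2)$, cf.\ Proposition~\ref{prop:mixed_subd}), so $U\cap T_1^s\cap T_2^s$ is nonempty for all $s$ small enough; each of its points is a transverse stable intersection and hence lifts, by Lemma~\ref{lem:transversal}, to a simple classical point $z^s\in\VK(p_1^s)\cap\VK(p_2^s)$ with $\Val(z^s)\in U$.

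To pass to the limit, I would use that $U$ is bounded, so the valuations of the coordinates of $z^s$ remain uniformly bounded as $s\to 0$. Setting $R^s(z_1):=\mathrm{Res}_{z_2}(p_1^s,p_2^s)\in\K[z_1]$, whose $z_1$-roots are precisely the $z_1$-coordinates of the intersection points of $\VK(p_1^s)$ and $\VK(p_2^s)$, continuity of roots of the univariate family $\{R^s\}$ in $s$ yields an accumulation point $z\in X_1\cap X_2$ (valid because $X_1\cap X_2$ is finite by assumption) with $\Val(z)\in\overline{U}$. Shrinking $U$ down to $\overline{E}$ and using that $E$, being a connected component of the closed polyhedral set $T_1\cap T_2$, is itself closed in $T_1\cap T_2$ (so $\overline{E}\cap (T_1\cap T_2)=E$) gives $\Val(z)\in E$, as required.

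The main obstacle is the convergence step, specifically ensuring that the perturbation neither changes the bounded nature of $E$ nor merges $E$ with neighbouring components of $T_1\cap T_2$ as $s\to 0$, and that the classical lifts $z^s$ do not escape to the boundary of $(\K\setminus 0)^2$. The first concern is a tropical-continuity statement, handled by taking $U$ sufficiently small and controlling how $\VT(p_i^s)$ varies as a function of the valuations of its coefficients, in the spirit of the continuity arguments found in~\cite{BB13,MS15}. The second is precisely resolved by the boundedness of $E$ together with the uniform control that the resultant $R^s$ provides on the $z_1$-valuations of the intersection points; an analogous argument in $z_2$ completes the required uniform bound.
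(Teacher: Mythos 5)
The paper does not prove this lemma from scratch: its proof is a one-line citation of \cite[Proposition 3.11]{Br-deMe11}, i.e.\ it is an instance of the known lifting theorems for tropical intersections. Your proposal tries to reprove that result by a perturb-and-specialize argument, and the steps carrying all the content are exactly the ones you assert rather than establish. Most concretely, the perturbation you set up does not do what you need: for a real scalar $s\neq 0$ one has $\val(s\,r_a)=\val(r_a)$, and $\val(c_a+s\,r_a)$ is independent of $s$ outside finitely many values, so $\VT(p_i^s)$ does not vary with $s$ at all; in particular the claimed Hausdorff convergence of $T_i^s$ to $T_i$ as $s\to 0$ is vacuous. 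To move the tropical curves you must perturb the \emph{valuations} of the coefficients (e.g.\ multiply them by factors $t^{\epsilon}$), after which the ``limit'' is a statement about an algebraic family over the valued field, not an archimedean limit in $s$. Second, the positivity of the tropical intersection number carried by the bounded component $E$ (needed so that $U\cap T_1^s\cap T_2^s\neq\emptyset$) is precisely the fan-displacement/stable-intersection statement in which boundedness of $E$ is essential; Proposition~\ref{prop:mixed_subd} does not give it, for unbounded components the analogous number can vanish, and asserting it amounts to quietly using the main content of \cite[Proposition 3.11]{Br-deMe11}.

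The specialization step also fails as written: $\K$ is not locally compact, so uniformly bounded valuations give no compactness, and ``continuity of the roots of $R^s$ in $s$'' has no meaning for an archimedean parameter over a non-archimedean field. Extracting $z\in X_1\cap X_2$ with $\Val(z)\in\overline{U}$, and then in $E$, requires a genuine valuation-theoretic or analytic argument (Newton polygons of the resultant in a one-parameter family, or the machinery of \cite{OP13,Rab12}), together with a pairing of the $z_1$- and $z_2$-coordinates and an argument that the limit does not leave $(\K\setminus 0)^2$. Finally, note that Lemma~\ref{lem:transversal} only asserts that solutions with a prescribed valuation are \emph{simple}; it does not provide the existence of a classical point above a transverse stable tropical intersection, so even your lifting of the perturbed intersections needs a Kapranov/Osserman--Payne type existence statement rather than that lemma. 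In short, the strategy is the natural one, but each key step is either incorrectly set up (the real-parameter perturbation) or an appeal to the very lifting theorem in question; the efficient route is the paper's, namely to invoke \cite[Proposition 3.11]{Br-deMe11} (or \cite{OP13,Rab12}) directly.
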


\begin{proof}
This is a consequence of~\cite[Proposition 3.11]{Br-deMe11}. 
\end{proof}
\begin{lemma}\label{lem:adjacent_diag_relevant}
Let $\xi\in\Xi$ be a relevant diagonal $2$-cell, and let $\xi'\in\Xi$ be a $2$-cell directly adjacent to $\xi$. Then, either $\xi'$ is irrelevant or it is diagonal relevant and such that 
\begin{equation}\label{eq:aligned}
\left\{\delta(\xi_1),\delta(\xi_2),\delta(\xi'_1),\delta(\xi'_2)\right\}\subset L_0,
\end{equation} for some line $L_0\subset\R^2$ is a line passing through $0$.
\end{lemma}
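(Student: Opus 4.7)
The plan is to exploit the structure of the shared 1-cell between two directly adjacent 2-cells, and then use the fact that the relevance line of $\xi$ is uniquely determined by the non-zero vertex $\delta(\xi_2)$ of $\tau_2$.

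Since $\xi$ and $\xi'$ are directly adjacent 2-cells of $\Xi$, their closures intersect along a 1-cell $\gamma\in\Xi$. Writing $\gamma=\gamma_1\cap\gamma_2$ with $\gamma_i\in\Xi_i$, we have three \emph{a priori} options: either (a) $\dim\gamma_1=1,\dim\gamma_2=2$, i.e.\ $\gamma\subset T_1$ only; (b) symmetrically $\gamma\subset T_2$ only; or (c) $\dim\gamma_1=\dim\gamma_2=1$, i.e.\ $\gamma\subset T_1\cap T_2$. In case (c), both $\gamma_1$ and $\gamma_2$ are edges sharing the one-dimensional segment $\gamma$, hence they are parallel. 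Consequently their duals $\delta(\gamma_1)$ and $\delta(\gamma_2)$, which are the segments of $\tau_1$ and $\tau_2$ orthogonal to $\gamma_1$ and $\gamma_2$ respectively, are themselves parallel, so $\dim\bigl(\delta(\gamma_1)+\delta(\gamma_2)\bigr)=1$. But $f\in\Omega\subset\Omega_1$, so $\Xi$ is stable (Lemma~\ref{lem:gen-valued_transversal}) and Definition~\ref{def:Transv} forces $\dim\delta(\gamma)=\dim\delta(\gamma_1)+\dim\delta(\gamma_2)=2$, a contradiction. Hence case (c) cannot occur, and I will focus on case (a) (case (b) being symmetric).

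In case (a), the 2-cell of $\Xi_2$ containing $\gamma$ contains both $\xi_2$ and $\xi'_2$, so $\xi_2=\xi'_2$ and therefore $\delta(\xi_2)=\delta(\xi'_2)$. Let $L\subset\R^2$ be the line through $(0,0)$, $\delta(\xi_1)$, $\delta(\xi_2)$ provided by the relevance of $\xi$ (Definition~\ref{def:lat-diag}). Since $A_2\subset\N^2\setminus\{(0,0)\}$, the vertex $\delta(\xi_2)$ is non-zero, so $L$ is uniquely determined as the line through the origin and $\delta(\xi_2)$. If $\xi'$ is relevant, with defining line $L'$, then $L'$ passes through $0$ and through $\delta(\xi'_2)=\delta(\xi_2)\neq 0$, whence $L'=L$; consequently $\delta(\xi'_1)\in L$ and
\[
\bigl\{\delta(\xi_1),\delta(\xi_2),\delta(\xi'_1),\delta(\xi'_2)\bigr\}\subset L.
\]

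Finally, I will appeal to Definition~\ref{def:lat-diag} once more: since $\xi$ is diagonal, the line $L$ does not bound a half-plane whose closure contains $A_1\cup A_2$. Thus any relevant cell whose defining line equals $L$ is automatically diagonal, which applies to $\xi'$ in the relevant case above. Setting $L_0:=L$ yields the conclusion: either $\xi'$ is irrelevant, or it is diagonal relevant with all four vertices lying on $L_0$. The main (and only) non-routine step is ruling out case (c), which relies crucially on the stability hypothesis guaranteed by $f\in\Omega_1$.
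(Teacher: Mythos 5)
Your proof is correct and follows essentially the same route as the paper's: the shared $1$-cell forces $\delta(\xi_2)=\delta(\xi'_2)$, relevance of $\xi$ and (if applicable) $\xi'$ then places all four dual points on the unique line through the origin and $\delta(\xi_2)\neq(0,0)$, and diagonality of $\xi'$ follows since it is governed by the same line as $\xi$. The only difference is that you explicitly exclude, via stability, the case where the common $1$-cell lies in $T_1\cap T_2$, a case the paper dismisses implicitly with its ``we may assume'' reduction; this is a welcome extra precision but not a different argument.
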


\begin{proof}
Since both $\xi$ and $\xi'$ are $2$-cells, each polyhedron in~\eqref{eq:aligned} is a point in $\N^2$. We may assume that the $1$-cell common to $\xi$ and $\xi'$ belongs to $T_1$. 
Then, it holds
\begin{equation}\label{eq:directly-adjacent-cells_relations}
\delta(\xi_2) = \delta(\xi'_2)\text{ and }\delta(\xi_1) \neq \delta(\xi'_1).
\end{equation} Furthermore, since $\xi$ is relevant, there exists $r\in\Q$ such that
\begin{equation}\label{eq:relevant-cell_property}
\delta(\xi_1) = r \cdot\delta(\xi_2).
\end{equation} If $\xi'$ is relevant, then, there exists $s\in\Q$ such that $\delta(\xi'_1) = s\cdot \delta(\xi'_2)$. Hence, Equations~\eqref{eq:directly-adjacent-cells_relations} and~\eqref{eq:relevant-cell_property} yield~\eqref{eq:aligned}. Finally, the inclusion~\eqref{eq:aligned} implies that $\xi'$ is also diagonal.
\end{proof} Recall that supercritical cells have dimension at most one (see Definition~\ref{def:super-crit}). Assume that $\xi$ is super-critical to $T_1$. Proposition~\ref{prop:t-crit_mixed-cells} shows that $\xi\cap C(F)\neq\emptyset$. Without loss of generality, we assume that $(0,0)\in \xi\cap C(F)$. Let $a\in \K\setminus 0$ be a point such that $(0,0)\in T_1(\val (a))$, and $C(f)\cap\mathbb{V}(f-a)$ has only isolated solutions in $\TK$. This assumption can be realized by applying a perturbation of $a$ if necessary so that $\val (a)$ remains unchanged. 

If $(0,0)$ belongs to a bounded component of $C(F)\cap T_1(\val (a))$, then the proof of Proposition~\ref{prop:super-critical-mixed-cells} $\textbf{a.}$ follows from Lemma~\ref{lem:Brugalle-Lopez-de-Medrano}. This is exactly the case if either $\xi$ is a $0$-cell as in Figure~\ref{fig:situations-table} $\color{royalblue}\bm a.\text{\textbf{i}}$, $\color{royalblue}\bm e.\text{\textbf{iii}}$ or, thanks to the below claim, if $\xi$ is a $1$-cell.

\begin{claim}\label{clm:adj-cells}
Let $\xi$ be a cell of dimension $1$ that is super critical to $T_1$, and assume that none of the  endpoints of $\xi$ is a point in $T_1\cap T_2$. Then, for any $\lambda\in\R$, either $C(F)\cap T_1(\lambda)\cap \xi=\emptyset $ or $\xi$ contains a bounded component of $C(F)\cap T_1(\lambda)$. 
\end{claim}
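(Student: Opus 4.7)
The plan is to reduce the claim to a local analysis near each endpoint of $\xi$ and then invoke Lemma~\ref{lem:Jacobian_at_origin2} together with Corollary~\ref{cor:lower-order} to rule out each finite endpoint from $C(F)$. Set $y_1:=F_1(\xi)$, which is well-defined because $\xi$ is super critical to $T_1$. I first compare $\lambda$ with $y_1$. If $\lambda>y_1$, then at every point of $\overline{\xi}$ the added constant in $f_1-c$ strictly dominates $(f_1-c)^{\trop}$, so $\overline{\xi}\cap T_1(\lambda)=\emptyset$ and the claim is vacuous. If $\lambda\leq y_1$, the two monomials of $\delta(\xi_1)$ tie at the common value $y_1\geq\lambda$ at every point of $\xi$; hence $\xi\subset T_1(\lambda)$, and on a neighbourhood of the interior of $\xi$ the polynomial $(f_1-c)^{\trop}$ agrees with $f_1^{\trop}$, so $T_1(\lambda)$ is locally $1$-dimensional and coincides with $\xi$ at every interior point of $\xi$. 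The hypothesis that the endpoints of $\xi$ avoid $T_1\cap T_2$ rules out case~\ref{it:1322} of Definition~\ref{def:main} (where a finite endpoint of $\xi$ necessarily lies in $T_1\cap T_2$), so $\xi$ belongs to case~\ref{it:131} or~\ref{it:1321}.

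The heart of the argument is to show that every finite endpoint $v$ of $\xi$ lies outside $C(F)$. Because $v\notin T_1\cap T_2$ and $v$ is a vertex of $T_1$ (being an endpoint of the $1$-cell $\xi\subset T_1$), the dual polytope $\delta(v)\subset\tau_1$ is a triangle with vertices $a_1,a_2,a_3\in A_1$ (the triangulation of $\tau_1$ follows from $f\in\Omega\subset\Omega_1$ via Lemma~\ref{lem:gen-valued_transversal}), and $v$ is dual to some vertex $b\in A_2$ of $\tau_2$. I arrange that $\delta(\xi_1)=[a_1,a_2]$. Since $\xi$ is super critical, it is necessarily relevant, so the four points $\{0,a_1,a_2,b\}$ lie on a common line through the origin; consequently $\det(a_i,b)=0$ for $i=1,2$, while $\det(a_3,b)\neq 0$ because the triangle $\delta(v)$ is non-degenerate, so that $a_3$ does not lie on the relevance line. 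After translating $v$ to $(0,0)$, Lemma~\ref{lem:Jacobian_at_origin2} yields
\[
\overline{\cj}_f \,=\, |\Jac_z\overline f| \,=\, c^2_b\sum_{i=1}^{3}c^1_{a_i}\det(a_i,b)\,z^{a_i+b-(1,1)} \,=\, c^2_b\,c^1_{a_3}\det(a_3,b)\,z^{a_3+b-(1,1)},
\]
a single nonzero monomial. Corollary~\ref{cor:lower-order} then forces $v\notin C(F)$.

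Assembling the pieces, assume $\lambda\leq y_1$ and let $p\in C(F)\cap T_1(\lambda)\cap\xi$, with $K$ the connected component of $C(F)\cap T_1(\lambda)$ containing $p$. Because $T_1(\lambda)$ coincides with $\xi$ on a neighbourhood of every interior point of $\xi$, the component $K$ can exit $\xi$ only through one of its endpoints; since each finite endpoint of $\xi$ lies outside the closed set $C(F)$, the set $K$ is contained in $\xi$ and bounded away from every finite endpoint. In case~\ref{it:131} the cell $\xi$ is bounded, so $K$ is automatically bounded; in case~\ref{it:1321} the cell $\xi$ is a half-line and Proposition~\ref{prop:t-crit_mixed-cells}(2) implies that $\xi\cap C(F)$, and hence $K$, is bounded. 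The principal obstacle I anticipate is the borderline regime $\lambda=y_1$: there, the constant term of $f_1-c$ and the monomials of $\delta(\xi_1)$ achieve the same tropical value at every point of $\xi$, so one must verify that the corner locus of $(f_1-c)^{\trop}$ still coincides locally with $\xi$ and does not swell into a higher-dimensional piece. This is a short computation that uses the orthogonality between $\delta(\xi_1)$ and $\xi$ together with the fact that the relevance line of $\xi$ passes through the origin.
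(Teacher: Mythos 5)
Your argument is correct and follows essentially the same route as the paper: you exclude case~\ref{it:1322} via the endpoint hypothesis, reduce to cases~\ref{it:131} and~\ref{it:1321}, and get boundedness from Proposition~\ref{prop:t-crit_mixed-cells}(2) in the lateral case and from the boundedness of $\xi$ itself in the diagonal case, which is exactly the paper's proof. The additional material you supply --- the local identification of $T_1(\lambda)$ with $\xi$ (including the borderline $\lambda=F_1(\xi)$, which indeed goes through because collinearity with the origin plus the orthogonality of $\delta(\xi_1)$ to $\xi$ puts the two tying exponents on one ray), the single-monomial Jacobian computation showing the endpoints avoid $C(F)$, and the connectedness trapping argument --- coincides with the paper's appendix computation for vertices of type~\ref{it:022} together with Proposition~\ref{prop:t-crit_mixed-cells}(1), so it fills in details the paper compresses into citations rather than changing the approach.
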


\begin{proof} Since $\xi$ is supercritical to $T_1$, according to Definition~\ref{def:super-crit}, either $\xi$ is adjacent to two relevant diagonal cells or it satisfies Definition~\ref{def:main}:~\ref{it:132}, in which $\zeta$ is a vertex of $T_1$. Assume that $C(F)\cap T_1(\lambda)\cap \xi\neq\emptyset $. Then, $\xi$ does not satisfy~\ref{it:1322} since $\zeta$ and $T_1$ satisfy~\ref{it:022}. If $\xi$ satisfies~\ref{it:1321}, then the proof follows from Proposition~\ref{prop:t-crit_mixed-cells} $(2)$. Otherwise, $\xi$ is adjacent to two relevant diagonal cells, and thus the proof follows from Definition~\ref{def:lat-diag}.
\end{proof}

We now treat the remaining cases for super-critical cells. That is, we assume that $\xi$ is a $0$-cell in one of the situations $\color{royalblue}\bm c.\text{\textbf{i}}$, $\color{royalblue}\bm c.\text{\textbf{iii}}$, $\color{royalblue}\bm b.\text{\textbf{i}}$ or $\color{royalblue}\bm d.\text{\textbf{iii}}$, represented in Figure~\ref{fig:situations-table}. Consider now the system
\begin{equation}\label{eq:sys:main-half-3}
\left\{
  \begin{array}{@{}lcc@{}}
   f_1(z) - a & = & 0,\\
   \cj_f(z) & = & 0,
  \end{array}
\right.
\end{equation} and let $C$ denote the space-curve defined as the set $(z_1,z_2,a)\in (\K\setminus 0)^3 $ forming its zero-locus. The projection $\pi^{\K}:(\K\setminus 0)^3\to\TK$, $(z_1,z_2,a)\to (z_1,z_2)$ sends $C$ surjectively onto the curve $C(f)$. The tropicalization 
\[
\Gamma:=\Val(C)
\] is its image under the map $\Val:(\K\setminus 0)^3\to\R^3$, $(z_1,z_2,a)\to (\val(z_1),\val(z_2),\val (a))$. Similarly, the set $\Gamma$ is sent surjectively to the tropical curve $C(F)\subset\R^2$ under the projection $\pi:\R^3\to\R^2$, $(x_1,x_2,\lambda)\to (x_1,x_2)$. Here, we have $\pi\circ\Val = \Val\circ\pi^{\K}$. We say that $\Gamma$ is an \emph{embedded tropical space-curve} in $\R^3$ (see~\cite{mikhalkin2008tropical} ). That is, a realization in $\R^3$ of a metrized graph $(E,V)$ having integer slopes, together with a weight function $\omega:E\to\N$, where for any $v\in V$ the set $E_v$ of edges adjacent to $v$ it holds
\begin{equation}\label{eq:balancing}
\sum_{e\in E_v} \omega (e)\vec{e} = 0,
\end{equation} where $\vec{e}$ is the primitive integer vector in $\Z^3$ pointing away from $v$. We call~\eqref{eq:balancing} the \emph{balancing condition} on $v$.
% of $\Gamma$ we say that the curve $\Gamma$ satisfies the \emph{balancing condition}. 

Recall that $\xi = (0,0)$. On the one hand, since $\xi$ is a point in $ T_1$, there exists $\lambda_0\in\R$ satisfying: $\xi\in T_1(\val a)$ for any $a\in \K$ if and only if $\val(a)\leq \lambda_0$. On the other hand, each point $p\in\Gamma$ is the valuation of a solution $(z_1,z_2,a)\in(\K\setminus 0)^3$ to the system~\eqref{eq:sys:main-half-3}. Therefore, Proposition~\ref{prop:super-critical-mixed-cells} \textbf{a.} is true if $\Gamma$ contains all points $(0,0,\lambda)$ satisfying $\lambda\leq \lambda_0$. The following Claim yields the proof of Proposition \textbf{a.} (see Figure~\ref{fig:four-cases}).

\begin{claim}\label{clm:vertical_edge}
Assume that $\xi$ satisfies one of the situations $\color{royalblue}\bm c.\text{\textbf{i}}$, $\color{royalblue}\bm c.\text{\textbf{iii}}$, $\color{royalblue}\bm b.\text{\textbf{i}}$ or $\color{royalblue}\bm d.\text{\textbf{iii}}$, represented in Figure~\ref{fig:situations-table}. Then, $\Gamma\subset \R^3$ has an edge of direction $(0,0,-1)$ with an endpoint $(0,0,\lambda_0)$.
\end{claim}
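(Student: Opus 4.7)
The plan is to exploit the fact that $\Gamma \subset \R^3$ is an embedded tropical space curve, so the balancing condition~\eqref{eq:balancing} holds at each vertex. I would proceed in three steps. First, I would establish that $(0,0,\lambda_0) \in \Gamma$ and is in fact a vertex: since $\xi = \{(0,0)\}$ is super-critical to $T_1$, Proposition~\ref{prop:t-crit_mixed-cells} guarantees the existence of $z \in C(f)$ with $\Val(z) = (0,0)$; then Lemma~\ref{lem:val-F_F-val} applied to the coordinate $f_1$ yields $\val(f_1(z)) = F_1(0,0) = \lambda_0$, so $(z, f_1(z)) \in C$ projects to $(0,0,\lambda_0) \in \Gamma$.

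Next, I would enumerate the edges of $\Gamma$ incident to $(0,0,\lambda_0)$. The horizontal edges (those contained in the plane $\{z_3 = \lambda_0\}$) correspond, via $\pi$, to local branches of $C(F)$ at $(0,0)$ along which $F_1$ remains constantly equal to $\lambda_0$. Using the explicit description of $\overline{\cj}_f$ computed during the proof of Proposition~\ref{prop:t-crit_mixed-cells} (detailed in \S\ref{sec:appendix_some_proofs}) for each of the four configurations $\bm{b}.\textbf{i}, \bm{c}.\textbf{i}, \bm{c}.\textbf{iii}, \bm{d}.\textbf{iii}$, I would read off the local shape of $C(F)$ at $(0,0)$ together with the slope of $F_1$ along each incident branch, thereby distinguishing the edge-lifts that stay in the horizontal plane from those that do not.

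The balancing condition~\eqref{eq:balancing} at $(0,0,\lambda_0)$ then forces the residual sum of the horizontal primitive vectors to be absorbed by additional non-horizontal contributions. In each of the four cases, the combinatorial restrictions imposed by super-criticality leave $(0,0,\pm 1)$ as the only geometrically admissible non-horizontal direction; the sign must be $-1$ because, by definition of $\lambda_0$, the point $(0,0)$ leaves $T_1(\val(a))$ as soon as $\val(a) > \lambda_0$, so no point of $\Gamma$ with coordinates $(0,0,\mu)$ for $\mu > \lambda_0$ can exist.

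The principal obstacle is the case analysis in the second step: across the four configurations the local picture of $C(F)$ near $(0,0)$ differs, and one must match edge multiplicities carefully to verify that the balancing deficit lies purely in the vertical direction. This hinges on the fact that in these four isolated cases (as opposed to the super-critical 1-cells already handled by Claim~\ref{clm:adj-cells}), the constraints on the cells adjacent to $\xi$ rigidify the local structure of $C(F)$ enough to carry out the balancing argument unambiguously, with the explicit formulas for $\overline{\cj}_f$ providing the necessary input.
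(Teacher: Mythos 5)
Your overall strategy (apply the balancing condition to $\Gamma$ at the point lying over $(0,0)$ and argue that the deficit can only be absorbed by a downward vertical edge) is the same idea that drives the paper's proof, and your argument for the sign, namely that no point of $\Gamma$ of the form $(0,0,\mu)$ with $\mu>\lambda_0$ can exist, is fine. The gap is in the step where you assert that ``$(0,0,\pm 1)$ is the only geometrically admissible non-horizontal direction''. You never establish any a priori constraint on the directions that edges of $\Gamma$ can have at this point, and the explicit formulas for $\overline{\cj}_f$ only control the projection $C(F)=\pi(\Gamma)$, not the third coordinate. The paper gets this constraint structurally: $\Gamma$ is contained in the intersection of the two tropical surfaces in $\R^3$ defined by $f_1(z)-a$ and $\cj_f(z)$; over an edge $e$ of $C(F)$ contained in $T_1$ and adjacent to $(0,0)$ this intersection is \emph{two-dimensional} (a vertical strip lying below the graph of $F_1$), so a priori the balancing deficit could be absorbed by a non-vertical edge of $\Gamma$ inside that strip, with any slope --- your case analysis of $\overline{\cj}_f$ cannot exclude this. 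What rules it out in the four configurations $\color{royalblue}\bm b.\text{\textbf{i}}$, $\color{royalblue}\bm c.\text{\textbf{i}}$, $\color{royalblue}\bm c.\text{\textbf{iii}}$, $\color{royalblue}\bm d.\text{\textbf{iii}}$ is that such an edge $e$ lies in a $1$-cell of $\Xi$ adjacent to two irrelevant $2$-cells, hence \emph{not} super-critical, so Proposition~\ref{prop:super-critical-mixed-cells}~\textbf{b)} forces every point $q\in\Gamma$ with $\pi(q)\in e$ to have third coordinate equal to $F_1(\pi(q))$, i.e.\ $\Gamma$ must stay on the graph of $F_1$ over $e$. Your proposal never invokes this, and without it the balancing argument only produces ``some extra edge'', not a vertical one.

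A second, related weakness: your horizontal/non-horizontal dichotomy is internally inconsistent, since the lifts of branches of $C(F)$ along which $F_1$ has nonzero slope are themselves non-horizontal and non-vertical, so the claim that the vertical direction is the only admissible non-horizontal one cannot be right as stated. The clean way to finish, once the graph constraint above is in place, is the paper's convexity observation: because $F_1$ is convex, all the non-vertical edges of $\Gamma$ at the vertex lie on the boundary of a convex polyhedron, so their weighted sum cannot vanish; the balancing condition then forces an edge pointing off that boundary, and by the structure of the ambient intersection locus (whose fibers over $T_1$ are downward vertical rays) this edge must have direction $(0,0,-1)$. Replacing your case-by-case slope bookkeeping with this convexity argument, and adding the appeal to Proposition~\ref{prop:super-critical-mixed-cells}~\textbf{b)}, would close the gap.
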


\begin{proof}
The embedded tropical curve $\Gamma$ is contained in the set $S_{1\cap J}$ defined as intersection locus of the two tropical surfaces in $\R^2$ 
\[
\begin{matrix}
S_1:=\left\lbrace \Val(z_1,z_2,a)\in( \K\setminus 0)^3~\left|~ f(z) - a =0 \right\rbrace\right. & \text{and} & S_J:=\left\lbrace \Val(z_1,z_2,a)\in(\K\setminus 0)^3~\left|~\cj_f(z)=0 \right\rbrace\right.  .
\end{matrix}
\]
On the one hand, the map $\pi|_{S_1}$ is a bijection over $\R^2\setminus T_1$, whereas the preimage, under $\pi|_{S_1}$, of any point in $T_1$ is a half-ray with direction $(0,0,-1)$ (see~\cite[\S 4]{Br-deMe11}). On the other hand, 
the preimage, under $\pi|_{S_J}$, over any point $x$ is empty if $x\in\R^2\setminus S_J$, and is a vertical line otherwise. Therefore, the map 
\[
\pi|_{S_{1\cap J}}:S_{1\cap J}\to C(F)
\] is a bijection over points outside $T_1$, and is a half-ray over points in $T_1\cap C(F)$ (see e.g. Figure~\ref{fig:four-cases}).

 Now, we describe the disposition of $S_{1\cap J}$ locally above $(0,0)$. Let $V_0\subset\R^2$ be a neighborhood of $(0,0)$, and let $U_0$ denote $\pi^{-1}(V_0)$. Then, the intersection $S_{1\cap J}\cap U_0$ is a piecewise-linear complex, consisting of a set $\Theta$ of half-edges, and a set $\Sigma$ of $2$-dimensional polygons in $\R^3$. All half-edges in $\Theta$ have a common endpoint, $v$ where $\pi(v)=(0,0)$. All polygons in $\Sigma$ have a common vertex $v$, and a common $E$ adjacent to $v$, such that $E$ is a half-line with direction $(0,0,-1)$ (see Figure~\ref{fig:four-cases}). Furthermore, for each half-edge $e\subset C(F)\cap V_0\setminus T_1$, there exists $\theta\in\Theta$ such that $\pi^{-1}(e)=\theta$. Similarly, for each half-edge $e\subset C(F)\cap T_1\cap V_0$, there exists $\sigma\in\Sigma$ such that $\pi^{-1}(e)=\sigma$.

Next, we describe the disposition of $\Gamma$ locally above $(0,0)$. %Since $\pi(\Gamma)$ is a tropical curve  
The intersection $\Gamma\cap U_0$ is a set of half-edges, denoted by $\Upsilon$, that satisfy the following conditions.

\begin{enumerate}

	\item \emph{Any $\mu\in\Upsilon$ has $v$ as an endpoint:} This follows from the description of $S_{1\cap J}\cap U_0$, since $\Gamma\subset S_{1\cap J}$.
	
	\item \emph{It holds $\Theta\subset\Upsilon$:} From the description of $ S_{1\cap J}$, any point in $C(F)\cap V_0\setminus T_1$ has exactly one preimage $q\in S_{1\cap J}$, under $\pi|_{S_{1\cap J}}$. Since $\pi (\Gamma) =C(F)$, we obtain $q\in\Gamma$.
	
	\item \emph{For each $\sigma\in\Sigma$, there exists one $\mu\in\Upsilon$ such that $\mu\subset\partial\sigma$, and $\mu$ is not vertical:} If $e$ is a half-edge of $C(F)\cap T_1\cap V_0$, then $e\subset\xi$, where $\xi\in\Xi$ is a $1$-cell directly adjacent to two irrelevant $2$-faces of $\Xi$. That is, $\xi$ is not super-critical. Then, Proposition~\ref{prop:super-critical-mixed-cells} \textbf{b.} shows that any $q\in\Gamma$ satisfying $\pi(q)\in\xi$, the third coordinate of $q$ is $\val(\pi(q))$. In other words, it holds that $q\in\partial\sigma$.

\end{enumerate}
 Since $F_1:\R^2\to\R$ is a convex function, the above three facts show that all non-vertical edges of $\Gamma$, emanating from $v$, are at the boundary of a polytope $\Delta$ in $\R^3$ (in Figure~\ref{fig:four-cases}, the faces of $\Delta$ are the blue non-vertical plane sections). Hence, the balancing condition applied to $\Gamma$ around $v$, implies that $\Gamma$ must have at least one edge $\theta$, with direction outwards of $\Delta$. Fact (3) above shows that $\theta$ must be vertical. This yields the claim.
\end{proof}

\begin{figure}[h]
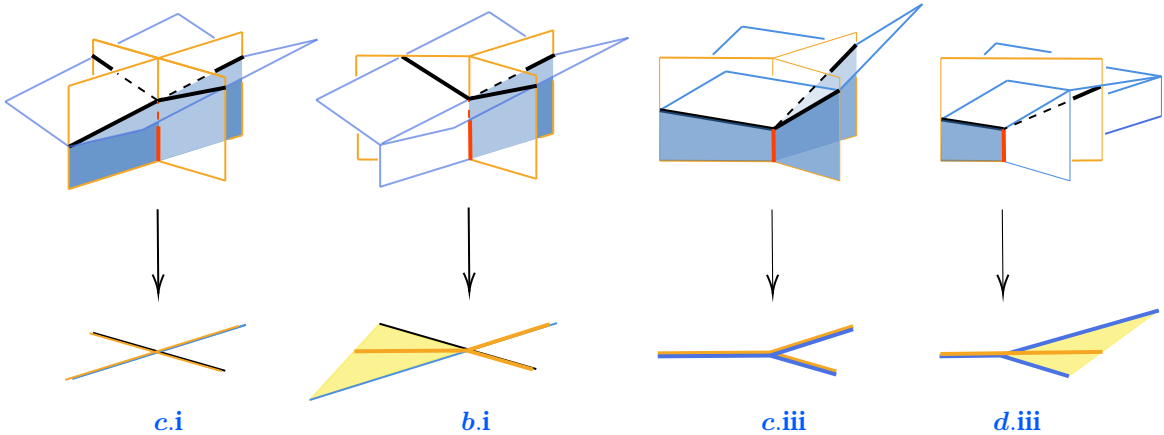


% [inline block 5: 1 envs, 27922 chars -> data_tex | \begin{tabular}{ll} \tikzset{every picture/.style={line width=0.75pt}} %set default line width to 0.75pt        ...]

\caption{The set $S_{1\cap J}$ in each of the cases in Claim~\ref{clm:vertical_edge}. It projects onto $C(F)$, contains $\Gamma$, and is formed by the intersection of $S_1$ (in blue) with $S_J$ (in orange).}\label{fig:four-cases}
\end{figure}

\subsection{Proof of Proposition~\ref{prop:super-critical-mixed-cells} \textbf{b.}}\label{sub:non-supercr} Assume that $\xi$ is \emph{not} supercritical to $T_1$, and let $w:=(w_1,w_2)\in\TK$ be any point satisfying $\val(w_1)\neq F_1(x)$.
% Let $(x,a)\in\xi\times \K$ be as in Proposition~\ref{prop:super-critical-mixed-cells} \textbf{b.} 
First, note that the equivalence 
\[
p\in T_1(\val (w_1))\Leftrightarrow \val(w_1) = F_1(x)
\] holds true if $\xi\cap T_1 = \emptyset$. This yields Proposition~\ref{prop:super-critical-mixed-cells} \textbf{b)} if $\xi\subset T_2\setminus T_1$ or if $\dim\xi = 2$. Therefore, we assume in what follows that $\xi\subset T_1$, and $\dim \xi\leq 1$.

\subsubsection{$\xi$ is a $0$-cell}\label{subsubsec:xi-0-cell-NOT-super} If $\xi$ is a vertex of $T_1$, Definition~\ref{def:super-crit} shows that $\xi$ is adjacent to two relevant $2$-cells (e.g. $\color{royalblue}\bm d.\text{\textbf{iv}}$ in Figure~\ref{fig:situations-table}). Proposition~\ref{prop:t-crit_mixed-cells} $(1)$ yields $\xi\cap C(F)=\emptyset$, and we deduce Proposition~\ref{prop:super-critical-mixed-cells} \textbf{b)}.

Assume now that $\xi\in T_1\cap T_2$. Then, locally around $\xi$, the situation is as $\color{royalblue}\bm e.\text{\textbf{ii}}$ in Figure~\ref{fig:situations-table}, where the continuous line is a part of $T_1$, and the dashed line is a part of $T_2$ (notice the difference in the disposition of curves between $\color{royalblue}\bm e.\text{\textbf{ii}}$ and $\color{royalblue}\bm e.\text{\textbf{iii}}$). Without loss of generality, we may assume that $x=(0,0)$.  Then, following Lemma~\ref{lem:Jacobian_at_origin2}, it holds 
\[
z^{\bm 1}\overline{\cj}_f = \overline{c}z^{v} \left|\begin{smallmatrix} u_1 &  v_1\\ u_2 &  v_2 \end{smallmatrix} \right|\cdot (\overline{a}\lambda z^{\lambda\cdot u} + \overline{b}\mu z^{\mu\cdot u}),
\] where 
\[
\overline{f} = (az^{\lambda\cdot u} + bz^{\mu\cdot u},~cz^{v} + dz^{\kappa\cdot u})
\]
for some values $\lambda,\mu,\kappa\in\Z$, $a,b,c,d\in\K\setminus 0$, and vectors $u,v\in\N^2$ such that $\lambda\neq\mu$. Since $\overline{\cj}_f = \overline{f_1} = 0$ has no solutions in $\TT$, the system~\eqref{eq:sys:main-half-3} has no solutions in $\TK$ with valuation $(0,0)$. This yields Proposition~\ref{prop:super-critical-mixed-cells} \textbf{b)} if $\xi$ is not a super-critical $0$-cell.

\subsubsection{$\xi$ is a $1$-cell}\label{subsec:xi-1-cell-NOT-super} If $\xi$ is an edge of $T_1$ directly adjacent to only one relevant $2$-cell of $\Xi$ (e.g. $\color{royalblue} \bm{e.}\text{\textbf{iv}}$ in Figure~\ref{fig:situations-table}), 
then Proposition~\ref{prop:t-crit_mixed-cells} yields $\xi\cap C(F)=\emptyset$, and we are done. The same can be said for the case where $\xi$ is directly adjacent to two relevant lateral cells having no essential  $1$-cells adjacent to them, and the 
endpoint of $\xi$ is a vertex of $T_1$ (e.g. $\color{royalblue} \bm{a.}\text{\textbf{vi}}$). 

Assume that $\xi$ is directly adjacent to two relevant lateral cells having no essential $1$-cells adjacent to them, and the endpoint of $\xi$ belongs to $T_1\cap T_2$ (e.g. $\color{royalblue} \bm{e.}\text{\textbf{vii}}$). Similarly as in the above \S\ref{subsubsec:xi-0-cell-NOT-super}, we assume that $(0,0)\in\xi$, and compute $\overline{\cj}_\xi$. Without loss of generality, we may assume that $\xi$ is vertical. Then, the polynomial $\overline{\cj}_\xi$ is a monomial times $k\overline{a}z_1^k + \overline{b}lz_1^l$, where $f_{1|\xi} = az_1^k + bz_1^l$ (\S\ref{sec:appendix_some_proofs} (5)). Since we have $k\neq l$, the system $\overline{\cj}_\xi = \overline{f_1} =0 $  has no solutions in $\TT$. This yields Proposition~\ref{prop:super-critical-mixed-cells} \textbf{b)} for such $\xi$.

Finally, if $\xi$ is directly adjacent to two irrelevant cells (e.g. $\color{royalblue} \bm{d.}\text{\textbf{v}}$), then the proof follows from Lemma~\ref{lem:transversal} as $ T_1(\val (w_1))\cap T_2(\val (w_2))$ has an unstable intersection at $x\in\xi$ if and only if $F(x)=\Val(w)$.
%then, the proof follows closely that in the above \S\ref{subsubsec:xi-0-cell-NOT-super} with the only difference being that $d=0$.

\section{Future work}\label{sec:future_work}
Two maps $f,g:\C^m\to \C^n$ are said to be \emph{topologically equivalent} if there are homeomorphisms $\phi:\C^m\to \C^m$ and $\psi:\C^n\to \C^n$ so that $g\circ \phi = \psi\circ f$. The classification of topological equivalence classes (or, topological types) is a difficult problem in affine geometry (c.f.~\cite{Fuk76,Aok80,Sab83,Nak84,jelonek2001topological}) that is solved only for quadratic polynomial maps $\C^2\to\C^n$ using a case-by-case analysis~\cite{FJ17,FJM18}. 
% was first posed by Thom, where he conjectured that there are finitely-many topological types of functions $\C^n\to\C$. Thom's conjecture was proven true by~\cite{Fuk76}, and generalized for maps $\C^2\to\C^n$ by Sabbah and Aoki~\cite{Sab83,Aok80}. Nakai later 
%~\cite{Nak84} 
%, for which 
%In this regard, this classification problem is only solved for quadratic polynomial maps $\C^2\to\C^n$~\cite{FJ17,FJM18} using a case-by-case analysis. 
Such an approach, however, becomes too cumbersome when generalized to maps of higher degrees. 
As two topological types for polynomial maps are dissimilar if their respective discriminants are not homeomorphic, one application of Theorem~\ref{thm:main} is to classify (or, enumerate) homeomorphism classes of discriminants for maps having the same support. A classification of combinatorial types of tropical polynomial maps $\R^2\to\R^2$ can lead to lower bounds for homeomorphism classes of discriminants on the plane. 

A closely-related problem to the classification of maps is the classification of singularities of the corresponding discriminants. Farnik, Jelonek and Ruas~\cite{FJR19} used the theory of multi-jets~\cite{mather1973generic} to describe the topology of a generic polynomial map $f:=(f_1,f_2):X\to\C^2$ and to
%of given degrees $d_1,d_2$, and to 
compute the numbers $n(f)$ and $c(f)$ of nodes and cusps, respectively of $D(f)$ in terms of the degrees of $f_1$ and $f_2$. 
%(see~\cite[Theorem 4.8]{FJR19}). 
In light of Theorem~\ref{th:Newton-polytope}, we expect that the pair $(n(f),~c(f))$ can be obtained, using Theorem~\ref{thm:main}, in terms of invariants of the supports $A_1$ and $A_2$ of $f_1$ and $f_2$ if $f$ is generic enough in $\C[A]$.

An approach, similar to Theorem~\ref{thm:main} can be used to describe the tropical curve of the bifurcation set for polynomial maps on the plane. Doing so requires recovering the tropical curve of the \emph{non-properness set} (or non-finiteness set) $S(f)$ for a polynomial map $f:\K^2\to\K^2$. This is the set of all points $y\in\K^2$ at which the preimage has less isolated points (counted with multiplicity) than its topological degree~\cite{Jel93,jelonek2001topological}. Since $B(f) = D(f)\cup S(f)$ (see e.g.~\cite{Jel16}), in order to characterize the tropical bifurcation set, an analogue of Theorem~\ref{thm:main} for $S(f)$ remains to be proven. This is the subject of a work in progress~\cite{EH2022tropinon}, where the polynomial maps are defined over the space $\K^n$ of any dimension.

\appendix

\section{Rest of proof of Proposition~\ref{prop:t-crit_mixed-cells}}\label{sec:appendix_some_proofs}
In this section, we keep the notations of \S\ref{sec:tropical_critical}. In particular, we assume that $f\in\Omega_1\cap\Omega_2$ (see Lemmas~\ref{lem:gen-valued_transversal} and~\ref{lem:crit-points-invar}). We will compute the polynomial $\overline{\cj}_f$ for a cell $\xi\in\Xi$ containing the origin $(0,0)$. In what follows, we use $|M|$ instead of $\det (M)$ to denote the determinant of any square matrix $M$. Let $\overline{J}_\xi$ denote the polynomial $|\Jac_{z} \overline{f}|$, let $\bm 1$ denote the point $(1,1)$, and let $A:=(A_1,A_2)$ denote the supports of $f$. With some exceptions for the below situations of $\xi$ (see Item \textbf{(5)} below), it is enough to set $\Omega$ as $\Omega_1\cap\Omega_2$. 

\begin{enumerate}

	\item \textbf{Zero-dimensional cells from intersections.} %\subsubsection{Zero-dimensional cells from intersections} 
Assume that $\xi\in T_1\cap T_2$. Lemma~\ref{lem:gen-valued_transversal} shows that there exists vectors $u,v,u',v'\in\N^2$ satisfying $\delta(\xi_1) =\{u,v\}$, $\delta(\xi_2) =\{u',v'\}$, and
\[
\overline{f} = (\overline{a_1}z^{u} + \overline{b_1}z^{v},~\overline{a_2}z^{u'} + \overline{b_2}z^{v'})
\] for some values $a_1,a_2,b_1,b_2\in \K\setminus 0$. The polynomial $z^{\bm 1}\cdot\overline{J}_\xi$ can be expressed as the sum of the four monomial terms
\[
\overline{a_1}\overline{a_2}z^{u+u'}\left| 
\begin{matrix}
u_1 & u_1'\\[4pt]
u_2 & u_2'
\end{matrix}
\right| 
+ \overline{a_1}\overline{b_2}z^{u+v'}
\left| 
\begin{matrix}
u_1 & v_1'\\[4pt]
u_2 & v_2'
\end{matrix}
\right| 
+ \overline{b_1}\overline{a_2}z^{v+u'}
\left| 
\begin{matrix}
v_1 & u_1'\\[4pt]
v_2 & u_2'
\end{matrix}
\right| 
+ \overline{b_1}\overline{b_2}z^{v+v'}
\left| 
\begin{matrix}
v_1 & v_1'\\[4pt]
v_2 & v_2'
\end{matrix}
\right|. 
\] Since $\xi$ is a stable intersection point of $T_1$ and $T_2$, one can check that $\overline{J}_\xi$ has at least two non-zero coefficients. This proves Proposition~\ref{prop:t-crit_mixed-cells} (1) when $\xi$ satisfies Definition~\ref{def:main}:~\ref{it:132}. \\
\item \textbf{Zero-dimensional cells from vertices.} 
Assume that $\xi$ is a vertex of $T_1$. As all the vertexes of $T_1$ are trivalent (see Lemma~\ref{lem:gen-valued_transversal}), there exists vectors $u,v,w,k\in\N^2$ satisfying $\delta(\xi_1) =\{u,v,w\}$, and $\delta(\xi_2) =\{k\}$, and thus 
\[
\overline{f} = (\overline{a}z^{u} + \overline{b}z^{v} + \overline{c}z^{w},~\overline{d}z^{k}),
\]
for some values $a,b,c,d\in\K\setminus 0$. The polynomial $z^{\bm 1}\cdot\overline{J}_\xi$ is then expressed as 
\[
\overline{d}z^{k}
\left(
\overline{a}z^{u}\left| 
\begin{matrix}
u_1 & k_1\\[4pt]
u_2 & k_2
\end{matrix}
\right| 
+ \overline{b}z^{v}\left| 
\begin{matrix}
v_1 & k_1\\[4pt]
v_2 & k_2
\end{matrix}
\right| 
+ \overline{c}z^{w}\left| 
\begin{matrix}
w_1 & k_1\\[4pt]
w_2 & k_2
\end{matrix}
\right|  
\right).
\] Since $\delta(\xi_1)$ is a set of three non-collinear points $u$, $v$, $w$, at least one of the monomials in $\overline{J}_\xi$ has a non-zero coefficient. Note that two coefficients are zero if and only if $\xi$ satisfies condition~\ref{it:022} of Definition~\ref{def:main}. This proves Proposition~\ref{prop:t-crit_mixed-cells} (1) when $\xi$ satisfies condition~\ref{it:02} of Definition~\ref{def:main}.\\

 \item \textbf{One-dimensional cell next to an irrelevant one.} 
Assume that $\xi$ is an edge of $T_1$ directly adjacent to an irrelevant cell. Lemma~\ref{lem:gen-valued_transversal} shows that there exists vectors $u,v,k\in\N^2$ satisfying $\delta(\xi_1) =\{u,v\}$, and $\delta(\xi_2) =\{k\}$, and thus 
\[
\overline{f} = (\overline{a}z^{u} + \overline{b}z^{v},~\overline{d}z^{k}),
\]
for some values $a,b,d\in\K\setminus 0$. The polynomial $z^{\bm 1}\cdot\overline{J}_\xi$ is then expressed as 
\[
\overline{d}z^{k}
\left(
\overline{a}z^{u}\left| 
\begin{matrix}
u_1 & k_1\\[4pt]
u_2 & k_2
\end{matrix}
\right| 
+ \overline{b}z^{v}\left| 
\begin{matrix}
v_1 & k_1\\[4pt]
v_2 & k_2
\end{matrix}
\right| 
\right).
\] If only one of the vectors $u$ or $v$ has same direction as $k$, then $\overline{J}_\xi$ has only one monomial term with non-zero coefficient. If none of them has the same direction as $k$, then $\overline{J}_\xi$ consists of exactly two monomial terms with non-zero coefficients. This proves Proposition~\ref{prop:t-crit_mixed-cells} (1) when $\xi$ satisfies condition~\ref{it:11} or~\ref{it:12} of Definition~\ref{def:main}.\\

 \item \textbf{One-dimensional cell next to two diagonal relevant ones.} 
Assume that $\xi$ is an edge of $T_1$, directly adjacent to two relevant diagonal cells $\sigma$ and $\sigma'$. Then, the exponent vectors $u,v,k\in\N^2$, computed in the previous case, now have the same direction, and thus $\overline{J}_\xi\equiv 0$. 
Lemma~\ref{lem:adjacent_diag_relevant}, and Proposition~\ref{prop:mixed_subd} (3) show that for any $q\in \R^2$, the set $F^{-1}(q)\cap(\sigma\cup\sigma')$ is either empty or a line segment with same direction as $\xi$. We conclude that, if Proposition~\ref{prop:t-crit_mixed-cells}:(1) is true for $\sigma$ and $\sigma'$, then it holds true for $\xi$ as well. This, together with Item (7) later, proves Proposition~\ref{prop:t-crit_mixed-cells} when $\xi$ satisfies Definition~\ref{def:main}:~\ref{it:131}.\\

 \item \textbf{One-dimensional cell next to two lateral cells.} 
Assume that $\xi$ is an edge of $T_1$ directly adjacent to two lateral cells $\sigma$ and $\sigma'$, 
and let $\gamma$ and $\gamma'$ be the two cells described in Definition~\ref{def:main}:~\ref{it:132}. We assume furthermore that $\xi$ is a vertical half-line inside an edge of $T_1$. The proof of the more cumbersome general case (where $\xi$  has finite slope) will be omitted as it follows closely this situation. Let $\zeta$ denote the $0$-cell in $\Xi$, adjacent to $\xi$. That is, $\zeta$ is the endpoint of $\xi$. For this case, we will drop the assumption that $(0,0)\in\xi$, and assume instead that $\zeta=(0,0)$. Then, we have 
\[
f = (f_{1}|_\zeta +  H_1,~f_{2}|_\zeta +  H_2),
\] where $H_i\in \K[z_1,z_2]$ collects monomial terms of $f_i$, whose coefficients have smaller valuations.

Assume first that $\zeta$ is a vertex of $T_1$. 
As all the vertexes of $T_1$ are trivalent (see Lemma~\ref{lem:gen-valued_transversal}), there exists $k,l,m,r\in\N$ and $a,b,c,d\in \K\setminus 0$ satisfying
\[
\overline{f} = \big(\overline{a}z_1^k +\overline{b}z_1^{l} + \overline{c}z^{(m,n)},~\overline{d}z_1^r\big),
\]
yielding 
\begin{equation}\label{eq:loc-Jac-vertex}
z^{\bm 1}\cdot\overline{J}_\zeta = \left|
\begin{matrix}
\overline{a}kz_1^{k} + \overline{b}lz_1^{l} + \overline{c}mz^{(m,n)}  & \overline{d}rz_1^{r} \\[4pt]

\overline{c}nz^{(m,n)} & 0
\end{matrix}
 \right| = \overline{c}\overline{d}rnz^{(l+m,n)}. 
\end{equation} Since $\xi$ is a vertical half-line situated below its endpoint $(0,0)$, computing 
\[
\overline{\cj}^{\lambda}_f:=\overline{|\Jac_{(z_1,t^\lambda z_2)} f|}
\] for some $\lambda>0$ provides the local equations for $C(F)\cap \xi$. Equation~\eqref{eq:loc-Jac-vertex} shows that the support of $\overline{J}_\zeta$ is $(l+m-1,n-1)\in\N^2$. Furthermore, one can check from Definition~\ref{def:essential} that 
$\gamma$ is essential to $\sigma$ if and only if the integer $n$ (appearing as in the exponent vector $(m,n)$ if $f_{1,\zeta}$) coincides with
\begin{equation}\label{eq:n_is_min}
M_0:=\min(w_2~|~(w_1,w_2)\in A_1\cup A_2).
\end{equation} If $n=M_0$, then for any $\lambda\in\R_+$ above, the value $\val(cdlz^{(l+m,\lambda n)}) =-n\lambda$ will always be strictly larger than the valuation of any other term, in $|\Jac_{(z_1,t^\lambda z_2)} f|$. 
Hence, the polynomial $\overline{\cj}^{\lambda}_f$ is a monomial $\overline{c}\overline{d}lz^{(l+m,n)}$.
 Consequently, Corollary~\ref{cor:lower-order} implies $C(F)\cap\xi=\emptyset$. Otherwise, if $n>M_0$, there exists $\lambda_0>0$ so that  $\val(cdlz^{(l+m,\lambda_0 n)}) =-n\lambda_0 = -M_0\lambda_0 +\val(c'd') = \val(c'd'lz^{(m',\lambda_0 M_0)})$, for some term $c'd'lz^{(u,\lambda_0 M_0)}$ in $\Jac_{(z_1,t^{\lambda_0} z_2)} f$. Hence, 
 $\overline{\cj}^{\lambda_0}_f$ has at least two monomial terms 
\[
\overline{c}\overline{d}rnz^{(l+m,n)}+\overline{c'}\overline{d'}z^{(u,M_0)}.
\] Similarly, Corollary~\ref{cor:lower-order} shows that $C(F)\cap\xi\neq\emptyset$. 

As for the statement (2) of Proposition~\ref{prop:t-crit_mixed-cells}, the discussion above shows that if the intersection $\xi\cap C(F)$ is non-empty, it can be locally expressed as
\[
\VT(az_1^k + bz_1^l +cz^{(m,n)})\cap\VT(cdrnz^{(l+m,n)} + c'd'z^{(u,M_0)}).
\] If this intersection is unbounded, then
\[
k=l~\text{and}~\val (c d) - \val(c'd')= \val (a) - \val (b).
\] We will define $\Omega_3$ from Proposition~\ref{prop:t-crit_mixed-cells} to exclude those (finitely-many) linear conditions. Using the same arguments as in Lemma~\ref{lem:crit-points-invar}, we can prove that $\Omega_3$ is Zariski open in $\K[A]$.  Therefore, Corollary~\ref{cor:lower-order} yields Proposition~\ref{prop:t-crit_mixed-cells} (1) and (2) whenever $\zeta$ is a vertex of $T_i$ satisfying Definition~\ref{def:main}:~\ref{it:132}.\\
 
Assume now that $\zeta\in T_1\cap T_2$. Lemma~\ref{lem:gen-valued_transversal} shows that there exists $k,l,m,n\in\N$ and $a,b,c,d\in\K\setminus 0$ such that  
\[
z^{\bm 1}\cdot \overline{J}_\zeta = 
\left|
\begin{matrix}
\overline{a}kz_1^k + \overline{b}lz_1^l  & \overline{c}rz_1^r + \overline{d}mz^{(m,n)}\\[4pt]

0 & \overline{d}mz^{(m,n)}
\end{matrix}
 \right| = \overline{d}mz^{(m,n)}(k\overline{a}z_1^k + \overline{b}lz_1^l).
\] Similarly as for the case above (where $p$ was a vertex of $T_1$), we will compute $\overline{\cj}_f^{\lambda}$ for all $\lambda>0$. Recall Equality~\eqref{eq:n_is_min}. If $n=M_0$, then both $\gamma$ and $\gamma'$ are essential to $\sigma$ and $\sigma'$ respectively. Thus, for any $\lambda\in\R_+$ above, the polynomial $\overline{\cj}_f^{\lambda}$ is the sum of exactly two monomials as above. Corollary~\ref{cor:lower-order} shows that $C(F)\cap\xi$ is a half-line. Otherwise, if $n>M_0$, then none of $\gamma$ or $\gamma'$ is essential to $\sigma$ or $\sigma'$ (see Definition~\ref{def:essential}). Hence, there exists $\lambda_0>0$ such that $\overline{\cj}_f^{\lambda_0}$ has at least three monomials 
\[
\overline{d}mz_1^{n-1}(k\overline{a}z_1^{k-1} + \overline{b}lz_1^{l-1}) + c_0z^{(u,M_0-1)}
\] for some $c_0\in \C^*$ and $u\in\N$. Corollary~\ref{cor:lower-order} implies that $C(F)$ has a vertex in $\xi$. Finally, to ensure that proving that $C(F)$ has no unbounded components in $\xi$, the coordinates of $\Val(f)$ have to avoid finitely-many linear combinations so that $\overline{\cj}_f^{\lambda_0}$ has no more than three monomials. We then choose the above $\Omega_3$ so that any $f\in\Omega_3$ avoids these equalities. This, proves Proposition~\ref{prop:t-crit_mixed-cells} (1) and (2) whenever $\zeta\in T_1\cap T_2$ and $\xi$  satisfies Definition~\ref{def:main}:~\ref{it:132}.\\

 \item \textbf{Irrelevant cells.}
From Lemma~\ref{lem:gen-valued_transversal}, if, for any $(y_1,y_2)\in\R^2$, the curves $T_1(y_1),T_2(y_2)$ intersect at a point $x$ in an irrelevant cell $\xi$, then $x$ is a stable intersection. Thus Lemma~\ref{lem:transversal} yields the proof of Proposition~\ref{prop:t-crit_mixed-cells} (3) whenever $\xi$ satisfies Definition~\ref{def:main}:~\ref{it:21}.\\

\item \textbf{Diagonal relevant cells.} 
Let $\xi$ be a diagonal relevant cell in $\Xi$. Then, for any $y\in F(\xi)$, the set 
\[
L_y:=F^{-1}(y)\cap \xi
\] is a line segment orthogonal to the line $L_0$ in Lemma~\ref{lem:adjacent_diag_relevant}. Let $x,x'$ be the two endpoints of $L_y$, that are at the boundary of $\xi$, so that the second coordinate of $x$ is bigger than that of $x'$ (see Figure~\ref{fig:it-7}). Then, we have $x\in \gamma$, and $x'\in\gamma'$, where $\gamma,\gamma'\in\Xi$ are $1$-cells directly adjacent to $\xi$. Lemma~\ref{lem:adjacent_diag_relevant} shows that each of $\gamma$ and $\gamma'$ is adjacent to only one relevant $2$-cell. In fact, this $2$-cell is $\xi$ (see Figure~\ref{fig:it-7}). 

Proposition~\ref{prop:t-crit_mixed-cells}, applied to each of $\gamma$ and $\gamma'$ shows that $x,x'\not\in C(F)$, and thus the tropical polynomial $\cj_f^{\trop}$ reaches its maximum at only one tropical monomial $\max(\langle x, m\rangle+ \val (c_m) $ when evaluated at $x$, and it does so at a monomial $\max(\langle x', n\rangle+ \val (d_n) $ when evaluated at $x'$. In what follows, we will show that those two monomials are different. By continuity of tropical polynomial functions, it will follow that the segment $L_x$ intersects $C(F)$.

Let us first compute $c_m z^m$. Assume without loss of generality that $\gamma\subset T_1$, and that $x=(0,0)$. This computation has been essentially done in Item (3) above, where $\delta(\xi_1) = (k_1,k_2)$ has the same direction as $\delta(\xi_2) = (v_1,v_2)$. Hence, we have $m$ is the vector sum of $k$, and $u$.

The same analysis holds true for the vector $n$, where the latter is the sum of the pair of vectors $k$, and $w$  or $l$ and $v$ (depending whether $x'$ lies $ T_1$ or in $T_2$), are dual to the cell $\xi''$ adjacent to $\sigma'$ (see Figure~\ref{fig:it-7}). As $\xi$ is diagonal, Definition~\ref{def:lat-diag} shows that  both $2$-cells $\xi'$, and $\xi''$ are irrelevant. Since additionally, $\sigma$, and $\sigma'$ lie on different sides of $\xi$, one concludes that $m:= k+u$, and $n:= k+w$ (or $l+v$), are two vectors having different directions. This proves Proposition~\ref{prop:t-crit_mixed-cells} when $\xi$ satisfies Definition~\ref{def:main}~\ref{it:221}.\\

\begin{figure}
\center

\tikzset{every picture/.style={line width=0.75pt}} %set default line width to 0.75pt        

\begin{tikzpicture}[x=1.25pt,y=1.25pt,yscale=-1,xscale=1]
%uncomment if require: \path (0,292); %set diagram left start at 0, and has height of 292

%Shape: Polygon [id:ds7119985026618901] 
\draw  [color={rgb, 255:red, 0; green, 0; blue, 0 }  ,draw opacity=0 ][fill={rgb, 255:red, 248; green, 231; blue, 28 }  ,fill opacity=0.48 ] (134.2,207.02) -- (142.2,231.02) -- (63.55,243.27) -- (56.31,229.95) -- cycle ;
%Straight Lines [id:da06525950781296919] 
\draw [color={rgb, 255:red, 74; green, 144; blue, 226 }  ,draw opacity=1 ]   (56.31,229.95) -- (134.2,207.02) ;
%Straight Lines [id:da14605686544710617] 
\draw [color={rgb, 255:red, 79; green, 126; blue, 24 }  ,draw opacity=1 ][fill={rgb, 255:red, 65; green, 117; blue, 5 }  ,fill opacity=1 ][line width=1.5]    (117.46,235.13) -- (108.68,214.48) ;
%Straight Lines [id:da9669254463125343] 
\draw [color={rgb, 255:red, 74; green, 144; blue, 226 }  ,draw opacity=1 ]   (63.55,243.27) -- (142.2,231.02) ;
%Shape: Circle [id:dp36170687501265664] 
\draw  [color={rgb, 255:red, 0; green, 0; blue, 0 }  ,draw opacity=1 ][fill={rgb, 255:red, 0; green, 0; blue, 0 }  ,fill opacity=1 ] (116.46,235.13) .. controls (116.46,234.58) and (116.9,234.13) .. (117.46,234.13) .. controls (118.01,234.13) and (118.46,234.58) .. (118.46,235.13) .. controls (118.46,235.68) and (118.01,236.13) .. (117.46,236.13) .. controls (116.9,236.13) and (116.46,235.68) .. (116.46,235.13) -- cycle ;
%Shape: Circle [id:dp8505425543415417] 
\draw  [color={rgb, 255:red, 0; green, 0; blue, 0 }  ,draw opacity=1 ][fill={rgb, 255:red, 0; green, 0; blue, 0 }  ,fill opacity=1 ] (107.68,214.48) .. controls (107.68,213.93) and (108.13,213.48) .. (108.68,213.48) .. controls (109.23,213.48) and (109.68,213.93) .. (109.68,214.48) .. controls (109.68,215.04) and (109.23,215.48) .. (108.68,215.48) .. controls (108.13,215.48) and (107.68,215.04) .. (107.68,214.48) -- cycle ;
%Shape: Polygon [id:ds6837933489146493] 
\draw  [color={rgb, 255:red, 0; green, 0; blue, 0 }  ,draw opacity=0 ][fill={rgb, 255:red, 248; green, 231; blue, 28 }  ,fill opacity=0.48 ] (234.2,206.57) -- (242.2,230.57) -- (163.55,242.82) -- (156.31,229.51) -- cycle ;
%Straight Lines [id:da12819550074131403] 
\draw [color={rgb, 255:red, 74; green, 144; blue, 226 }  ,draw opacity=1 ]   (156.31,229.51) -- (234.2,206.57) ;
%Straight Lines [id:da8420454912073985] 
\draw [color={rgb, 255:red, 79; green, 126; blue, 24 }  ,draw opacity=1 ][fill={rgb, 255:red, 65; green, 117; blue, 5 }  ,fill opacity=1 ][line width=1.5]    (217.46,234.69) -- (208.68,214.04) ;
%Straight Lines [id:da675092488842248] 
\draw [color={rgb, 255:red, 0; green, 0; blue, 0 }  ,draw opacity=1 ] [dash pattern={on 3.75pt off 0.75pt}]  (163.55,242.82) -- (242.2,230.57) ;
%Shape: Circle [id:dp3760433584644933] 
\draw  [color={rgb, 255:red, 0; green, 0; blue, 0 }  ,draw opacity=1 ][fill={rgb, 255:red, 0; green, 0; blue, 0 }  ,fill opacity=1 ] (216.46,234.69) .. controls (216.46,234.13) and (216.9,233.69) .. (217.46,233.69) .. controls (218.01,233.69) and (218.46,234.13) .. (218.46,234.69) .. controls (218.46,235.24) and (218.01,235.69) .. (217.46,235.69) .. controls (216.9,235.69) and (216.46,235.24) .. (216.46,234.69) -- cycle ;
%Shape: Circle [id:dp43874648459005583] 
\draw  [color={rgb, 255:red, 0; green, 0; blue, 0 }  ,draw opacity=1 ][fill={rgb, 255:red, 0; green, 0; blue, 0 }  ,fill opacity=1 ] (207.68,214.04) .. controls (207.68,213.49) and (208.13,213.04) .. (208.68,213.04) .. controls (209.23,213.04) and (209.68,213.49) .. (209.68,214.04) .. controls (209.68,214.59) and (209.23,215.04) .. (208.68,215.04) .. controls (208.13,215.04) and (207.68,214.59) .. (207.68,214.04) -- cycle ;

% Text Node
\draw (88.11,250.79) node [anchor=north west][inner sep=0.75pt]  [font=\small]  {$( w,k)$};
% Text Node
\draw (80.68,226.35) node [anchor=north west][inner sep=0.75pt]  [font=\small]  {$( v,k)$};
% Text Node
\draw (81.54,200.06) node [anchor=north west][inner sep=0.75pt]  [font=\small]  {$( u,k)$};
% Text Node
\draw (59.79,218.1) node [anchor=north west][inner sep=0.75pt]  [font=\small]  {$\gamma $};
% Text Node
\draw (110.86,202.81) node [anchor=north west][inner sep=0.75pt]  [font=\small]  {$p$};
% Text Node
\draw (119.46,237.53) node [anchor=north west][inner sep=0.75pt]  [font=\small]  {$p'$};
% Text Node
\draw (189.86,251.1) node [anchor=north west][inner sep=0.75pt]  [font=\small]  {$( v,l)$};
% Text Node
\draw (182.43,226.66) node [anchor=north west][inner sep=0.75pt]  [font=\small]  {$( v,k)$};
% Text Node
\draw (183.29,200.37) node [anchor=north west][inner sep=0.75pt]  [font=\small]  {$( u,k)$};
% Text Node
\draw (65.55,246.67) node [anchor=north west][inner sep=0.75pt]  [font=\small]  {$\gamma '$};

\end{tikzpicture}

\caption{the intersection of the preimage $F^{-1}(q)$ with a relevant diagonal cell}\label{fig:it-7}

\end{figure}
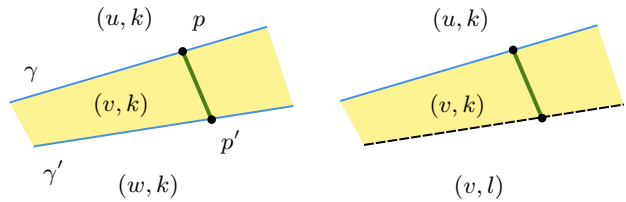
 \item \textbf{Lateral cells.} 
%\subsubsection{Lateral cells} 
In this part of the proof, we suppose that $\xi$ is a lateral $2$-cell. For simplicity, we proceed similarly to Item (5) by assuming that $\delta(\xi_1)+\delta(\xi_2)$ is a vector whose second coordinate is zero. The cases where $\delta(\xi_1)+\delta(\xi_2)$ is an arbitrary vector in $\N^2$ follow the same steps as in our assumption, albeit with more cumbersome notations, and thus we omit it here.

Let $n\in\N$ be so that $\delta(\xi_1)+\delta(\xi_2)=(n,0)$. For any point $y\in F(\xi)$, we have $F^{-1}(y)$ is a vertical half-line $L_y$, with an endpoint $\zeta\in \gamma$, where $\gamma$ is a $1$-cell in $\Xi$, directly adjacent to $\xi$ (see Figure~\ref{fig:lateral_cell}). 
%Let $\xi'$, and $\xi''$ be the $2$-cells in $\Xi$ as in Definition~\ref{def:essential}. 
For this case, we will drop the assumption that $(0,0)\in\xi$, and assume instead that $\zeta=(0,0)$. If $\gamma\subset T_1$, there exists $k,m,n,r\in\N$ and $a,c,d\in \K\setminus 0$ satisfying
\[
\overline{f} = (\overline{a}z_1^k +\overline{c}z^{(m,n)} ,~\overline{d}z_1^r).
\] Hence, we obtain
\begin{equation}\label{eq:loc-Jac-vertex2}
z^{\bm 1}\cdot\overline{J}_p = \left|
\begin{matrix}
\overline{a}kz_1^{k} + \overline{c}mz^{(m,n)}  & \overline{d}rz_1^{r} \\[4pt]

\overline{c}nz^{(m,n)} & 0
\end{matrix}
 \right| = \overline{c}\overline{d}rnz^{(l+m,n)}. 
\end{equation} The proof follows closely that of Item (5), where the intersection $ L_y\cap C(F)$ is non-empty if and only if the essentiality condition is met. This proves Proposition~\ref{prop:t-crit_mixed-cells} (3) when $\xi$ satisfies Definition~\ref{def:main}:~\ref{it:222}.

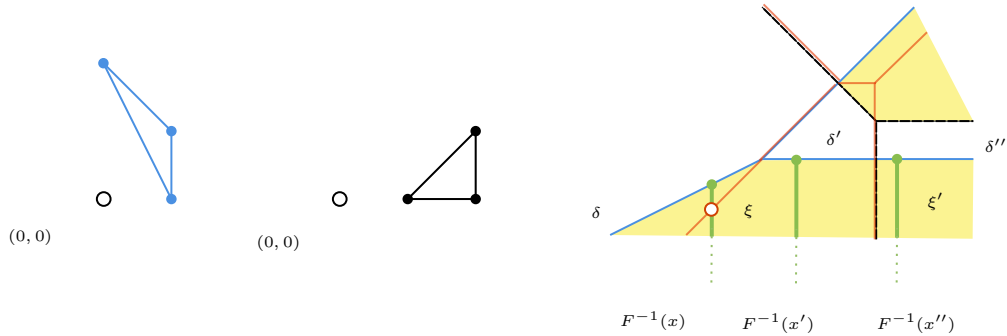
\begin{figure}

\tikzset{every picture/.style={line width=0.75pt}} %set default line width to 0.75pt        

\begin{tikzpicture}[x=1.6pt,y=1.6pt,yscale=-1,xscale=1]
%uncomment if require: \path (0,292); %set diagram left start at 0, and has height of 292

%Shape: Polygon [id:ds12968246944190964] 
\draw  [color={rgb, 255:red, 0; green, 0; blue, 0 }  ,draw opacity=0 ][fill={rgb, 255:red, 248; green, 231; blue, 28 }  ,fill opacity=0.48 ] (223.63,72.52) -- (274.2,72.22) -- (274.02,91.07) -- (188.33,90.19) -- cycle ;
%Shape: Polygon [id:ds44693742345005427] 
\draw  [color={rgb, 255:red, 0; green, 0; blue, 0 }  ,draw opacity=0 ][fill={rgb, 255:red, 248; green, 231; blue, 28 }  ,fill opacity=0.48 ] (260.2,36.29) -- (274.18,63.24) -- (251.22,63.24) -- (242.23,54.25) -- cycle ;
%Straight Lines [id:da59743515970414] 
\draw [color={rgb, 255:red, 74; green, 144; blue, 226 }  ,draw opacity=1 ]   (68.53,49.54) -- (84.6,65.61) ;
%Straight Lines [id:da7504031816663335] 
\draw [color={rgb, 255:red, 74; green, 144; blue, 226 }  ,draw opacity=1 ]   (84.6,65.61) -- (84.6,81.68) ;
%Straight Lines [id:da6310114512909069] 
\draw [color={rgb, 255:red, 74; green, 144; blue, 226 }  ,draw opacity=1 ]   (68.53,49.54) -- (84.6,81.68) ;
%Straight Lines [id:da827494060771222] 
\draw [color={rgb, 255:red, 0; green, 0; blue, 0 }  ,draw opacity=1 ][fill={rgb, 255:red, 0; green, 0; blue, 0 }  ,fill opacity=1 ]   (140.47,81.68) -- (156.53,81.68) ;
%Straight Lines [id:da8236394799133565] 
\draw [color={rgb, 255:red, 0; green, 0; blue, 0 }  ,draw opacity=1 ][fill={rgb, 255:red, 0; green, 0; blue, 0 }  ,fill opacity=1 ]   (140.47,81.68) -- (156.53,65.61) ;
%Shape: Circle [id:dp886733682237592] 
\draw  [color={rgb, 255:red, 74; green, 144; blue, 226 }  ,draw opacity=1 ][fill={rgb, 255:red, 74; green, 144; blue, 226 }  ,fill opacity=1 ] (83.6,81.68) .. controls (83.6,81.13) and (84.05,80.68) .. (84.6,80.68) .. controls (85.15,80.68) and (85.6,81.13) .. (85.6,81.68) .. controls (85.6,82.23) and (85.15,82.68) .. (84.6,82.68) .. controls (84.05,82.68) and (83.6,82.23) .. (83.6,81.68) -- cycle ;
%Shape: Circle [id:dp22298419986092644] 
\draw  [color={rgb, 255:red, 0; green, 0; blue, 0 }  ,draw opacity=1 ][fill={rgb, 255:red, 0; green, 0; blue, 0 }  ,fill opacity=1 ] (155.53,81.68) .. controls (155.53,81.13) and (155.98,80.68) .. (156.53,80.68) .. controls (157.09,80.68) and (157.53,81.13) .. (157.53,81.68) .. controls (157.53,82.23) and (157.09,82.68) .. (156.53,82.68) .. controls (155.98,82.68) and (155.53,82.23) .. (155.53,81.68) -- cycle ;
%Shape: Circle [id:dp5264780454635721] 
\draw  [color={rgb, 255:red, 0; green, 0; blue, 0 }  ,draw opacity=1 ][fill={rgb, 255:red, 0; green, 0; blue, 0 }  ,fill opacity=1 ] (139.47,81.68) .. controls (139.47,81.13) and (139.91,80.68) .. (140.47,80.68) .. controls (141.02,80.68) and (141.47,81.13) .. (141.47,81.68) .. controls (141.47,82.23) and (141.02,82.68) .. (140.47,82.68) .. controls (139.91,82.68) and (139.47,82.23) .. (139.47,81.68) -- cycle ;
%Shape: Circle [id:dp6499892505514705] 
\draw  [color={rgb, 255:red, 0; green, 0; blue, 0 }  ,draw opacity=1 ][fill={rgb, 255:red, 255; green, 255; blue, 255 }  ,fill opacity=1 ] (66.98,81.68) .. controls (66.98,80.78) and (67.71,80.06) .. (68.6,80.06) .. controls (69.49,80.06) and (70.22,80.78) .. (70.22,81.68) .. controls (70.22,82.57) and (69.49,83.29) .. (68.6,83.29) .. controls (67.71,83.29) and (66.98,82.57) .. (66.98,81.68) -- cycle ;
%Straight Lines [id:da6812171133605048] 
\draw [color={rgb, 255:red, 0; green, 0; blue, 0 }  ,draw opacity=1 ][fill={rgb, 255:red, 0; green, 0; blue, 0 }  ,fill opacity=1 ]   (156.53,81.68) -- (156.53,65.61) ;
%Shape: Circle [id:dp15195885175648294] 
\draw  [color={rgb, 255:red, 0; green, 0; blue, 0 }  ,draw opacity=1 ][fill={rgb, 255:red, 0; green, 0; blue, 0 }  ,fill opacity=1 ] (155.53,65.61) .. controls (155.53,65.06) and (155.98,64.61) .. (156.53,64.61) .. controls (157.09,64.61) and (157.53,65.06) .. (157.53,65.61) .. controls (157.53,66.16) and (157.09,66.61) .. (156.53,66.61) .. controls (155.98,66.61) and (155.53,66.16) .. (155.53,65.61) -- cycle ;
%Shape: Circle [id:dp26623845553981096] 
\draw  [color={rgb, 255:red, 74; green, 144; blue, 226 }  ,draw opacity=1 ][fill={rgb, 255:red, 74; green, 144; blue, 226 }  ,fill opacity=1 ] (83.6,65.61) .. controls (83.6,65.06) and (84.05,64.61) .. (84.6,64.61) .. controls (85.15,64.61) and (85.6,65.06) .. (85.6,65.61) .. controls (85.6,66.16) and (85.15,66.61) .. (84.6,66.61) .. controls (84.05,66.61) and (83.6,66.16) .. (83.6,65.61) -- cycle ;
%Shape: Circle [id:dp14325494147881823] 
\draw  [color={rgb, 255:red, 74; green, 144; blue, 226 }  ,draw opacity=1 ][fill={rgb, 255:red, 74; green, 144; blue, 226 }  ,fill opacity=1 ] (67.53,49.54) .. controls (67.53,48.99) and (67.98,48.54) .. (68.53,48.54) .. controls (69.09,48.54) and (69.53,48.99) .. (69.53,49.54) .. controls (69.53,50.1) and (69.09,50.54) .. (68.53,50.54) .. controls (67.98,50.54) and (67.53,50.1) .. (67.53,49.54) -- cycle ;
%Shape: Circle [id:dp7150924553059127] 
\draw  [color={rgb, 255:red, 0; green, 0; blue, 0 }  ,draw opacity=1 ][fill={rgb, 255:red, 255; green, 255; blue, 255 }  ,fill opacity=1 ] (122.78,81.68) .. controls (122.78,80.78) and (123.51,80.06) .. (124.4,80.06) .. controls (125.29,80.06) and (126.02,80.78) .. (126.02,81.68) .. controls (126.02,82.57) and (125.29,83.29) .. (124.4,83.29) .. controls (123.51,83.29) and (122.78,82.57) .. (122.78,81.68) -- cycle ;
%Straight Lines [id:da31449341962359856] 
\draw [color={rgb, 255:red, 74; green, 144; blue, 226 }  ,draw opacity=1 ]   (188.33,90.19) -- (224.27,72.22) ;
%Straight Lines [id:da6394144085279716] 
\draw [color={rgb, 255:red, 74; green, 144; blue, 226 }  ,draw opacity=1 ]   (224.27,72.22) -- (260.2,36.29) ;
%Straight Lines [id:da6344880226945648] 
\draw [color={rgb, 255:red, 74; green, 144; blue, 226 }  ,draw opacity=1 ]   (274.2,72.22) -- (224.27,72.22) ;
%Straight Lines [id:da2635259354641091] 
\draw [color={rgb, 255:red, 228; green, 55; blue, 4 }  ,draw opacity=0.6 ]   (206.3,90.19) -- (224.27,72.22) ;
%Straight Lines [id:da2300773364126656] 
\draw [color={rgb, 255:red, 228; green, 55; blue, 4 }  ,draw opacity=0.6 ]   (223.63,72.52) -- (241.6,54.55) ;
%Straight Lines [id:da8240436363813493] 
\draw [color={rgb, 255:red, 0; green, 0; blue, 0 }  ,draw opacity=1 ][fill={rgb, 255:red, 0; green, 0; blue, 0 }  ,fill opacity=1 ] [dash pattern={on 3.75pt off 0.75pt}]  (251.22,63.24) -- (242.23,54.25) ;
%Straight Lines [id:da9151644419557999] 
\draw [color={rgb, 255:red, 0; green, 0; blue, 0 }  ,draw opacity=1 ][fill={rgb, 255:red, 0; green, 0; blue, 0 }  ,fill opacity=1 ] [dash pattern={on 3.75pt off 0.75pt}]  (242.23,54.25) -- (224.27,36.29) ;
%Straight Lines [id:da14168288016507846] 
\draw [color={rgb, 255:red, 228; green, 55; blue, 4 }  ,draw opacity=0.6 ]   (224.6,35.62) -- (242.57,53.59) ;
%Straight Lines [id:da5319034950825321] 
\draw [color={rgb, 255:red, 228; green, 55; blue, 4 }  ,draw opacity=0.6 ]   (242.23,54.25) -- (251.2,54.25) ;
%Straight Lines [id:da013553894513886666] 
\draw [color={rgb, 255:red, 0; green, 0; blue, 0 }  ,draw opacity=1 ][fill={rgb, 255:red, 0; green, 0; blue, 0 }  ,fill opacity=1 ] [dash pattern={on 3.75pt off 0.75pt}]  (251.22,63.24) -- (274.18,63.24) ;
%Straight Lines [id:da5502736771577805] 
\draw [color={rgb, 255:red, 0; green, 0; blue, 0 }  ,draw opacity=1 ][fill={rgb, 255:red, 0; green, 0; blue, 0 }  ,fill opacity=1 ] [dash pattern={on 3.75pt off 0.75pt}]  (251.22,91.2) -- (251.22,63.24) ;
%Straight Lines [id:da7880931292641048] 
\draw [color={rgb, 255:red, 228; green, 55; blue, 4 }  ,draw opacity=0.6 ]   (250.87,91.22) -- (250.87,54.25) ;
%Straight Lines [id:da19013616071289285] 
\draw [color={rgb, 255:red, 228; green, 55; blue, 4 }  ,draw opacity=0.6 ]   (250.87,54.25) -- (263.23,42.22) ;
%Straight Lines [id:da027977067933500077] 
\draw [color={rgb, 255:red, 137; green, 189; blue, 79 }  ,draw opacity=1 ][line width=1.5]    (212.35,90.55) -- (212.35,78.24) ;
%Straight Lines [id:da1596509156825917] 
\draw [color={rgb, 255:red, 150; green, 191; blue, 104 }  ,draw opacity=1 ][line width=0.75]  [dash pattern={on 0.84pt off 2.51pt}]  (212.26,101.44) -- (212.35,88.55) ;
%Shape: Circle [id:dp9488189628161894] 
\draw  [color={rgb, 255:red, 137; green, 189; blue, 79 }  ,draw opacity=1 ][fill={rgb, 255:red, 137; green, 189; blue, 79 }  ,fill opacity=1 ] (211.35,78.24) .. controls (211.35,77.69) and (211.8,77.24) .. (212.35,77.24) .. controls (212.9,77.24) and (213.35,77.69) .. (213.35,78.24) .. controls (213.35,78.79) and (212.9,79.24) .. (212.35,79.24) .. controls (211.8,79.24) and (211.35,78.79) .. (211.35,78.24) -- cycle ;
%Straight Lines [id:da8677236409514804] 
\draw [color={rgb, 255:red, 137; green, 189; blue, 79 }  ,draw opacity=1 ][line width=1.5]    (232.35,90.74) -- (232.35,72.43) ;
%Straight Lines [id:da19393607231641385] 
\draw [color={rgb, 255:red, 150; green, 191; blue, 104 }  ,draw opacity=1 ][line width=0.75]  [dash pattern={on 0.84pt off 2.51pt}]  (232.26,101.63) -- (232.35,88.74) ;
%Shape: Circle [id:dp8035458677983851] 
\draw  [color={rgb, 255:red, 137; green, 189; blue, 79 }  ,draw opacity=1 ][fill={rgb, 255:red, 137; green, 189; blue, 79 }  ,fill opacity=1 ] (231.35,72.43) .. controls (231.35,71.88) and (231.8,71.43) .. (232.35,71.43) .. controls (232.9,71.43) and (233.35,71.88) .. (233.35,72.43) .. controls (233.35,72.98) and (232.9,73.43) .. (232.35,73.43) .. controls (231.8,73.43) and (231.35,72.98) .. (231.35,72.43) -- cycle ;
%Straight Lines [id:da3549653619621165] 
\draw [color={rgb, 255:red, 137; green, 189; blue, 79 }  ,draw opacity=1 ][line width=1.5]    (256.15,90.81) -- (256.15,72.23) ;
%Straight Lines [id:da40825503378536243] 
\draw [color={rgb, 255:red, 150; green, 191; blue, 104 }  ,draw opacity=1 ][line width=0.75]  [dash pattern={on 0.84pt off 2.51pt}]  (256.06,101.43) -- (256.15,88.54) ;
%Shape: Circle [id:dp3304028984127104] 
\draw  [color={rgb, 255:red, 137; green, 189; blue, 79 }  ,draw opacity=1 ][fill={rgb, 255:red, 137; green, 189; blue, 79 }  ,fill opacity=1 ] (255.15,72.23) .. controls (255.15,71.68) and (255.6,71.23) .. (256.15,71.23) .. controls (256.7,71.23) and (257.15,71.68) .. (257.15,72.23) .. controls (257.15,72.78) and (256.7,73.23) .. (256.15,73.23) .. controls (255.6,73.23) and (255.15,72.78) .. (255.15,72.23) -- cycle ;
%Shape: Circle [id:dp4073048739959736] 
\draw  [color={rgb, 255:red, 208; green, 65; blue, 2 }  ,draw opacity=1 ][fill={rgb, 255:red, 255; green, 255; blue, 255 }  ,fill opacity=1 ] (210.75,84.14) .. controls (210.75,83.31) and (211.42,82.64) .. (212.24,82.64) .. controls (213.07,82.64) and (213.74,83.31) .. (213.74,84.14) .. controls (213.74,84.96) and (213.07,85.63) .. (212.24,85.63) .. controls (211.42,85.63) and (210.75,84.96) .. (210.75,84.14) -- cycle ;

% Text Node
\draw (45.33,88.07) node [anchor=north west][inner sep=0.75pt]  [font=\tiny]  {$( 0,0)$};
% Text Node
\draw (104,89.4) node [anchor=north west][inner sep=0.75pt]  [font=\tiny]  {$( 0,0)$};
% Text Node
\draw (190,107.4) node [anchor=north west][inner sep=0.75pt]  [font=\tiny]  {$F^{-1}( x)$};
% Text Node
\draw (218.67,108.07) node [anchor=north west][inner sep=0.75pt]  [font=\tiny]  {$F^{-1}( x')$};
% Text Node
\draw (250.67,108.07) node [anchor=north west][inner sep=0.75pt]  [font=\tiny]  {$F^{-1}( x'')$};
% Text Node
\draw (219.24,82.07) node [anchor=north west][inner sep=0.75pt]  [font=\tiny]  {$\xi $};
% Text Node
\draw (262.67,79.48) node [anchor=north west][inner sep=0.75pt]  [font=\tiny]  {$\xi '$};
% Text Node
\draw (183.24,82.89) node [anchor=north west][inner sep=0.75pt]  [font=\tiny]  {$\delta $};
% Text Node
\draw (238.67,64.89) node [anchor=north west][inner sep=0.75pt]  [font=\tiny]  {$\delta '$};
% Text Node
\draw (276.18,66.64) node [anchor=north west][inner sep=0.75pt]  [font=\tiny]  {$\delta ''$};

\end{tikzpicture}

\caption{The $1$-cell $\delta$ is essential to the lateral cell $\xi$. The $1$-cells $\gamma'$ and $\gamma''$ are not essential to $\xi$ and $\xi'$ respectively. The vertical green half-lines are preimages of three different points $x,x'\in F(\xi)$, and $x''\in F(\xi')$. The tropical curve in red is the set $C_F$.}\label{fig:lateral_cell}
\end{figure}

\end{enumerate}

\subsection*{Contact}
  Boulos El Hilany,\\
  Institut f\"ur Analysis und Algebra, \\
TU Braunschweig, Universit\"atsplatz 2. 38106 Braunschweig, Germany.\\
 \href{mailto:b.el-hilany@tu-braunschweig.de}{b.el-hilany@tu-braunschweig.de},\\
 \href{https://boulos-elhilany.com}{boulos-elhilany.com}

\bibliographystyle{alpha}					   % For the style
\def\cprime{$'$}

\end{document}